\tikzset{inner sep=0pt, 
  root/.style={circle,draw,minimum size=11pt,thick}, 
  short root/.style={circle,fill,minimum size=7pt}, 
} 
\newcommand{\mbb}{\mathbb}
\newcommand{\mbf}{\mathbf}
\newcommand{\om}{\xi}
\newtheorem{thm}[equation]{Theorem} 
\newtheorem{prop}[equation]{Property}
\newtheorem{proposition}[equation]{Proposition}
\newtheorem{lemma}[equation]{Lemma} 
\newtheorem{lm}[equation]{Lemma} 
\newtheorem{cor}[equation]{Corollary}
\newtheorem{example}[equation]{Example}
\newtheorem{remark}[equation]{Remark}
\newtheorem{definition}[equation]{Definition}
\newcommand{\la}{\langle}
\newcommand{\ra}{\rangle}
\newcommand{\op}{\operatorname}
\newcommand{\ZZ}{\mathbf Z}
\newcommand{\G}{\mathbf G}
\renewcommand{\H}{\mathbf H}
\newcommand{\K}{\mathbf K}
\newcommand{\M}{\mathbf M}
\renewcommand{\S}{\mathbf S}
\newcommand{\V}{\mathbf V}
\newcommand{\W}{\mathbf W}
\newcommand{\m}{\mathbf m}
\newcommand{\n}{\mathbf n}
\renewcommand{\r}{\mathbf r}
\renewcommand{\u}{\mathbf u}
\renewcommand{\v}{\mathbf v}
\newcommand{\w}{\mathbf w}
\newcommand{\x}{\mathbf x}
\newcommand{\y}{\mathbf y}
\newcommand{\z}{\mathbf z}
\newcommand{\CC}{\mathbb C}
\newcommand{\robust}{greed compatible}
\newcommand{\Sym}{\text{Sym}}
\begin{document}

\title[Group coding with complex isometries]
{Group coding with complex isometries}

\author{Hye Jung Kim} 
\address{Dept.\ of Mathematics, University of Hawai`i-West O`ahu, 
Kapolei, HI 96707, USA}
\email{hyejungkimkim@gmail.com}
\author{J.\ B.\ Nation} 
\address{Dept.\ of Mathematics, University of Hawai`i, Honolulu, 
HI 96822, USA}
\email{jb@math.hawaii.edu}
\author{Anne V. Shepler} 
\address{Dept.\ of Mathematics, University of North Texas, Denton, 
TX 76203, USA}
\email{ashepler@unt.edu}

\dedicatory{In memory of Wes Peterson.}
\thanks{
Results were presented at the RIMS Workshop on Combinatorial Structures
and Information Theory in Ashikaga, Japan in August 2010.
The first author was partially supported by NSF research grants
\#DMS-0800951 and \#DMS-1101177.}
\keywords{group code, subgroup decoding, isometries, unitary groups, 
reflection groups, wreath products}
\subjclass[2010]{94B60, 20G20}

\date{November 4, 2013.}

\begin{abstract}
We investigate group coding for arbitrary 
finite groups acting linearly on a vector space.  
These yield robust codes based on real or complex
matrix groups.
We give necessary and sufficient
conditions for correct subgroup decoding 
using geometric notions of minimal length coset representatives.
The infinite family of complex reflection groups
$\G(r,1,n)$ 
produces effective codes of arbitrarily large size that can be decoded
in relatively few steps.
\end{abstract}

\maketitle

\section{Introduction}
Permutation group codes originated  
in the 1950's in unpublished memos of David Slepian, 
who used the orbit of a point on a 
sphere under a group action
as signals for communication.
Slepian chose
a group of permutations of coordinates and reversals of 
their signs acting on a finite-dimensional real vector space.
He published this work in 1965 
and extended the idea to arbitrary groups of isometries 
(see~\cite{DS1} and~\cite{DS2}).
Ingemarsson~\cite{II} and Ericson~\cite{TE} provide
surveys of early work on group codes.
Recent applications of permutation codes to flash memory can be found in
Jiang \emph{et al.}~\cite{JMSB,JSB} and Barg and Mazumdar~\cite{BM}.

Slepian's original permutation group codes 
have been generalized to other real reflection groups (Coxeter groups);
see Mittelholzer and Lahtonen~\cite{ML} for a comprehensive account. 
Fossorier, Nation, and Peterson~\cite{FNP} 
developed a decoding method for group codes
using a sequence of subgroups and coset representatives
which yields
efficient decoding of real reflection group codes.
Properties of the length function (defined by simple reflections)
and parabolic subgroup structure give effective codes 
based on Coxeter groups.
Peterson asked what other groups might have an action that lends
itself well to coding using these ideas.  

In this note, we analyze
properties that an arbitrary finite group of 
complex matrices should exhibit for
a successful group coding scheme.  
After outlining group coding and subgroup decoding 
in Section~\ref{description}, we enumerate 
in Section~\ref{effective} the 
characteristics of an effective code.
Section~\ref{geometricnotions}
establishes various geometric notions of minimal coset representatives
analogous to minimal length representatives 
in the theory of Coxeter groups.
These representatives are defined with respect to some
fixed initial vector and sequence of nested subgroups.
We use analogs of Weyl chambers
for arbitrary isometry groups.
We prove that
these geometric notions yield robust codes
in Section~\ref{greedpays} and 
give necessary and sufficient conditions for correct subgroup
decoding in Section~\ref{minimalitysuffices}.  

To summarize two main results from these sections,
let us distinguish two levels of ``correct decoding."  
We say that an algorithm decodes correctly \emph{with some noise}
if there exists $\delta>0$ such that a received vector $\r$ decodes
to a transmitted codeword $\w$ whenever $\| \r - \w \| < \delta$.
We say that the algorithm decodes \emph{robustly}
if a received vector $\r$ always decodes to the nearest codeword $\w$.

\vspace{1ex}

{\bf Theorem A.}
{\em Let\/ $\G$ be any finite matrix group and choose any initial
vector with full orbit and any sequence of nested subgroups.
The subgroup decoding algorithm decodes correctly 
with some noise if and only
if induced coset representatives are minimal.}

\vspace{1ex}

{\bf Theorem B.}
{\em Let\/ $\G$ be any finite matrix group and choose any initial
vector and sequence of nested subgroups.
If coset representatives are greed compatible,
then the group decoding algorithm decodes robustly.
}

\vspace{2ex}

In Section~\ref{comparingnotions}, we compare our
various geometric notions of minimal coset representatives.
We discuss ties in Section~\ref{ties}
and show how to improve the efficiency of decoding in Section~\ref{type3}.
We give a result on controlling error in Section~\ref{errorcontrol}
using group theory.
These ideas are implemented for general wreath
products (of an isometry group with a symmetric group)
in Section~\ref{wreath}.
After a quick background on reflection groups
in Section~\ref{reflections},
we apply our ideas by constructing and analyzing 
effective group codes built on the infinite family of 
complex reflection groups
$\G(r,1,n)$ in Section~\ref{gr1n}.
These codes include previous codes based on the Coxeter groups 
$\text{Sym}_{n}$ (the symmetric groups) and $W\! B_n$
(the hyperoctahedral groups).
The family $\G(r,1,n)$
offers group codes of arbitrarily large size 
with low decoding complexity
that carry special geometric significance:
For each $n,r>1$, the group
$\G(r,1,n)$ is the symmetry group
of a Platonic solid
in $n$-dimensional complex space,
the generalized $r$-cube or ``cross polytope''.
Note that with few exceptions (thirty-four, actually), every irreducible
complex reflection group is some $\G(r,1,n)$ or one of its subgroups.

For some other complex reflection groups,
the subgroup decoding methods
described here do not work as well, 
as we explain in Section~\ref{othergroups}.
It can be unclear how to adjust the parameters so that  
encoded messages decode
correctly.  For these cases,
the first author~\cite{HJK} has developed alternate decoding algorithms
which have been refined by Walker~\cite{CW}
(see also~\cite{JBNCW}).   Appendix~I describes a general
version of this alternate decoding scheme 
and gives a sufficient condition for correct decoding.
Appendix~II outlines a method to improve the performance of codes based
on $\G(r,1,n)$ using a proper subset of the orbit of the initial vector as the set
of codewords.

Of course, there are other encoding/decoding schemes for group codes
which could likely extend well to complex
reflection groups. 
Besides the more traditional
sorts of group decoding schemes using sorting algorithms,
Hagiwara, Kong, and Wadayama (see~\cite{TWMH, MHJK})
have recently introduced permutation codes with linear programming 
decoding.  This seems to be a particularly interesting approach.

Note that any finite group of complex linear transformations
acts by isometries with respect to some inner product.
(One may just average an arbitrary inner product on the
vector space over the finite group to produce
one that is invariant under the group action.)
After a possible change of basis, we may assume this inner
product is standard, and thus the finite group acts by
unitary matrices.  We occasionally
use this assumption when it simplifies arguments.

Also note that we have attempted to make arguments amenable to both
pure mathematicians and coding theorists.

\section{Description of the subgroup decoding scheme}
\label{description}
We distinguish different levels of generality in discussing
group coding schemes, beginning with the basic method before 
proceeding to more detailed algorithms.  
Mathematical readers should recall 
that the goal of coding is not encryption, 
but rather the efficient transmission or storage of information while 
resisting channel noise (corruption) and controlling errors.
There is no explicit error correction involved in group coding; rather, one may superimpose
a correction scheme after the received vector is
decoded.  

We fix a finite group $\G$ of isometries 
acting on a finite dimensional vector space $\V$.
To simplify notation, we assume $\V$ is a complex
vector space, and so we may assume $\G$ is a unitary group. 
Our arguments extend to isometry groups over other spaces
as well: We could just as well take $\V$
to be a real vector space and $\G$ a group of orthogonal
matrices,  
or take $\V$ to be a vector space over the
division ring $\mbb H$ of real quaternions so that $\G$ consists of unitary 
matrices over $\mbb H$.

\subsection{Group coding scheme}

A \emph{group coding scheme} 
uses 
the following general method for encoding and decoding, without specifying the details of implementation.
Identify a set of messages $\M$ with group elements
using some correspondence, $\gamma: \M \to \G$.
Fix an \emph{initial vector} $\x_0$ on the unit sphere in $\V$.
(We standardize the initial vector to length one by convention.) 
The \emph{code} is the orbit
of the initial vector under the group $\G$,
$$\G\x_0  = \{ g\x_0 : g \in \G \},$$
and the points $g\x_0$ are called \emph{codewords}.
(More generally, coding theory often uses a subset of the orbit
of $\x_0$ as the code; e.g., see Appendix II.)
We send a message $\mbf m$ in $\M$ to some receiver
by transmitting the corresponding codeword,
$$
\x = g^{-1} \x_0\quad
\text{({\em transmitted vector} or {\em coded message})},
$$
where $g = \gamma(\mbf m)$.  Interference may disrupt
communication,  
and the received vector (which may no longer lie
on the unit sphere) generally 
has the form 
$$
\r = \x + \n\quad
\text{({\em received vector})},$$ 
where $\n$ in $\V$ represents channel noise.  
Ideally, $\r$ will be close to $\x$, i.e., the 
distance $\Vert \r - \x \Vert$ will be small with respect
to the given $\G$-invariant inner product on $\V$.
The receiver decodes by finding a group element $g'$ that 
maps $\r$ as close as possible to the initial vector $\x_0$:
$$
g' \text{\ ({\em decoded message})}
\text{ minimizes }
\Vert a\r - \x_0 \Vert \text{ over all } a \text{ in } \G\, .
$$
The received message is then the message corresponding
to $g'$, i.e., $\mbf m' = \gamma^{-1} (g')$.
We call $g$ the {\em sent message}
and $g'$ the {\em decoded message},
suppressing the dependence on some choice of $\gamma$.

\subsection{Orbit of the initial vector}
A natural ambiguity arises
as the group coding scheme may not output a unique
decoded message $g'$ for each sent message $g$:
the received vector may be equidistant from two different
codewords.
We say that the initial vector $\x_0$ has {\em full orbit}
if the size of its orbit is the order of the group $\G$.
If $\x_0$ does not have full orbit,
then the isotrophy (point-wise fixer) subgroup
$$\S=\op{Stab}_{\G}(\x_0)$$ of $\x_0$ in $\G$ is nontrivial,
and several group elements $a$ could minimize the distance
between $a\r$ and $\x_0$, since 
$$
||a\r -\x_0|| = ||a'\r- \x_0||
$$
for all $a,a'$ in the same right coset of $\S$ (i.e., with
$\S a=\S a'$).
Thus, we say two group elements define
{\em equivalent} messages if they lie in the same right coset of $\S$.
We seek a decoding method that
outputs messages equivalent to those sent.

Subgroup decoding works better and the theory is more transparent
when $\x_0$ has full orbit,
and one can always choose an initial vector with full orbit.
(If $\G$ is a reflection group, for example, we fix a vector
$\x_0$ off a reflecting hyperplane.)
So why have we chosen to keep track
of $\S$  (see Theorem~\ref{algorithmworks}) 
before emphasizing the case of initial vectors with full orbit?
Some readers may wish to apply the theory of group coding
presented here to arbitrary representations of an 
abstract finite group (which may not act faithfully).
In fact, it is not customary in coding theory 
to always use an initial vector with full orbit, 
and indeed, some interesting codes arise
from other choices (see~\cite{ML, FNP, TWMH}).  
In any case, 
a nontrivial isotrophy subgroup $\S$ is not an obstacle, 
as we may replace $\gamma$ by a map
from messages to representatives of right cosets of $\S$
and define a left inverse map $\gamma^{-1}$ that is constant on right
cosets of $\S$.  

\subsection{Basic subgroup decoding}

When the group $\G$ is finite but
large, it is not efficient to loop through all the elements 
$a$ in $\G$ to determine those that minimize
$||a\r -\x_0||$ and obtain
the decoded message.  
There are various methods to organize the search,
among which is the 
\emph{basic subgroup decoding algorithm}, which we explain now.

For any nested subgroups $\H < \K$ of $\G$, we may fix  
a set $\op{CL}(\K/ \H)$ of coset representatives
for the left cosets of $\H$ in $\K$
(i.e., the sets $a \H$ for $a$ in $\K$) that includes $I$.
These representatives are called \emph{coset leaders}
of $\K$ over $\H$ following traditional coding theory terminology.

The parameters at our disposal for basic subgroup decoding are
\begin{itemize}
\item a finite group $\G$ of isometries acting on the vector space $\V$,
\item an initial vector $\x_0$ with $\Vert \x_0 \Vert = 1$,
\item a sequence of nested subgroups 
\[ \{ I \} = \G_0 < \G_1 < \G_2 \ldots < \G_m=\G,\text{ and }  \]
\item  coset leaders $\op{CL}(\G_k/\G_{k-1})$ 
 for $\G_k$ over $\G_{k-1}$.
\end{itemize}

Every element of $\G$ has a unique expression as a product of 
coset leaders, giving a ``canonical form'' for group elements:
We may uniquely write any element $g$ in $
\G$
as $g = c_m \cdots c_1$ with each $c_k$ in $\op{CL}(\G_k/\G_{k-1})$.
Thus, the transmitted codeword corresponding to the encoded message 
$g = \gamma(\m)$ can be written as 
\[ \x = g^{-1} \x_0 = c_1^{-1} \cdots c_m^{-1} \x_0 .  \]

The recursive {\em subgroup 
decoding algorithm} is defined as follows.
Let $\r = \x + \n$ denote the received vector and set $\r_0=\r$.
At the $k$-th step, assume
$\r_{k-1} = d_{k-1} \cdots d_1 \r$ is given
for some sequence of coset leaders
$d_j \in \op{CL}(\G_j/\G_{j-1})$.
Find a coset leader $d_k \in \op{CL}(\G_k/\G_{k-1})$ that minimizes 
the distance $\Vert a\r_{k-1} - \x_0 \Vert$ 
over all $a \in \op{CL}(\G_k/ \G_{k-1})$ and set
$\r_k = d_k \r_{k-1} = d_k \cdots d_1 \r$.
(If more than one coset leader yields the minimum distance, 
choose the first one in some
ordering.)  After $m$ steps, the algorithm outputs
\[ g'=d_m\cdots d_1 \]
and the decoded message is interpreted as $\m' = \gamma^{-1}(g')$.

For certain groups $\G$, subgroup sequences, and choices
of initial vector, the element 
$g'$ always minimizes the distance 
$\Vert a\r - \x_0 \Vert$ over
all $a \in \G$ for small noise and is
equivalent to the sent message $g$.  The 
coding scheme then decodes correctly
and resists corruption by noise.

One could test all coset leaders at each step
of the subgroup decoding algorithm to find a minimizing coset
leader, but we explain a more efficient method 
in Section~\ref{type3}.  One
navigates recursively through  
a spanning tree of the coset leader graph, yielding
a {\em standard subgroup decoding algorithm}.
This method has been shown to work well 
for real reflection groups (see~\cite{FNP}) 
and can be very efficient.  

\section{Effective Decoding}\label{effective}
What does it mean for a decoding scheme to work effectively?
It should decode correctly
despite channel noise and 
implement practically.
One can ask whether an algorithm
\begin{enumerate}
\item decodes correctly with no noise,
\item decodes correctly with some noise,
\item decodes robustly, i.e., always decodes to the nearest 
codeword,
\item controls error when noise is large, and
\item decodes in a reasonably small number of steps.
\end{enumerate}
We will address these questions in order for the subgroup decoding algorithm.

Correct decoding occurs when
the decoding algorithm
outputs a message equivalent to $g$ 
whenever the code word $g^{-1}(\x_0)$ is transmitted.
In this case, the greedy algorithm
produces a global minimum (of distance back to the initial vector
$\x_0$)
even though, at each stage of the algorithm,
only coset leaders are tested for finding local minimums.
It is not clear that the initial vector and coset leaders
can always be adjusted to ensure correct decoding
after a subgroup sequence has been fixed.
(For example, see the code based on
the exceptional complex reflection group $\G_{25}$ in~\cite{HJK}.)  
This explains why the conditions for decoding
correctly with noise in Sections~\ref{greedpays} and~\ref{minimalitysuffices}  
are somewhat involved.


Except for artificial examples, however, a decoding scheme that works 
with zero noise will also decode correctly whenever the received vector
is in some neighborhood of a codeword.  This can be formalized:

\begin{definition}
We say that an algorithm \emph{decodes correctly with some noise} if there
exists $\delta > 0$ such that
a received vector $\r$ decodes to a group element equivalent to $g$ whenever
$\Vert \r - g^{-1}\x_0 \Vert < \delta$.   
\end{definition}

Corollary~\ref{correctdecoding}
gives necessary and sufficient conditions for correct decoding 
with some noise.
A stronger notion of correct decoding 
requires received vectors to decode to closest codewords
when they exist:

\begin{definition}
We say that an algorithm \emph{decodes robustly} 
if a received vector $\r$ decodes to a group element
equivalent to $g$ in $\G$ whenever $\r$ is closer to codeword
$g^{-1}\x_0$ than any other codeword, i.e., whenever
$\Vert \r - g^{-1}\x_0 \Vert < \Vert \r - h^{-1}\x_0 \Vert$  
for all $h \notin \S g$.
\end{definition}

Robust decoding is of course desirable and implies correct decoding with some noise.  But it is not always easy to 
verify robust decoding, 
while it is often straightforward to check that an algorithm
decodes correctly with some noise.
A sufficient condition for robust decoding is given in
Theorem~\ref{algorithmworks} and applied in Section~\ref{gr1n}
to the codes based on
the groups $\G(r,1,n)$. 

The fourth property can also be interpreted geometrically
using abstract group theory:
If the received vector $\r$ is closer to a codeword $h^{-1}\x_0$
than to the transmitted vector $g^{-1}\x_0$, 
then the algorithm will output decoded message $h$ instead of
$g$ when decoding correctly (up to equivalence by the isotrophy
subgroup of $\x_0$).  Thus, we may 
control error even with large noise
by choosing the correspondence $\gamma$ between
messages and group elements so that
$\gamma^{-1}(g)$ and $\gamma^{-1}(h)$ do not differ much 
whenever $h^{-1}\x_0$ and $g^{-1}\x_0$ are close,
at least with high probability.
For the purposes of this paper, we take the message $\gamma(g)$ to 
be the actual sequence of coset leaders $c_1, \ldots, c_m$ such that
$g=c_m \cdots c_1$. More generally, $\gamma$ could be some function
of this sequence, e.g., a bitstring determined by the coset leaders.
(Each coset leader could determine a piece of a long 
bitstring, for example.)
Thus, we arrange a subgroup decoding algorithm so that if 
$g=c_m \cdots c_1$ and $h=d_m \cdots d_1$ with 
$\Vert g^{-1}\x_0 - h^{-1}\x_0 \Vert$ sufficiently small, then
$c_i = d_i$ for almost all $i$, thereby controlling error when 
interference produces large noise.
This is the effect of Theorem~\ref{thmWes}.

The fifth property can be analyzed by counting the number of operations
in the algorithm (in some reasonable way) to measure the complexity of encoding and decoding with a
particular method.  This is done explicitly 
for codes based on the groups $\G(r,1,n)$
in Section~\ref{gr1n}.
The use of subgroups and coset leaders allows us to break the
decoding process into parts of manageable size and there
are often natural
candidates for the subgroup sequence, perhaps more than one.
\emph{Efficiency dictates that 
the subgroup sequence should be chosen so as to make the index of
consecutive terms small.}
That statement may be vague, but the principle is not:  The efficiency
of encoding and decoding is roughly proportional to the sum of the indices
of the consecutive subgroups.
For at each stage of decoding, one must choose a coset leader $d_k$ from
a collection of $[\G_{k-1}:\G_k]$ possibilities.
Thus there are at most $\sum_{k=1}^n [\G_{k-1}:\G_k]$ steps to subgroup 
decoding, compared with $|\G| = \prod_{k=1}^n [\G_{k-1}:\G_k]$
steps needed to search through the whole group.

Together, these criteria give us a way to determine how well
a given coding scheme works.


\section{Geometric notions of minimal coset representatives}
\label{geometricnotions}

We now identify conditions on coset representatives that will 
guarantee correct
subgroup decoding, with
some channel noise or without.
In standard subgroup decoding for Coxeter groups, 
coset leaders are determined algebraically.  
If $\H \leq \K$ represents a consecutive pair in the subgroup sequence,
then each coset leader $c$ is chosen as an element 
in the coset of
minimal length when written as a product of generators of $\K$.
When we use a sequence of parabolic subgroups and choose 
simple reflections as generators, 
a unique shortest length element exists in each coset.
The algebraic condition of minimal length
(in terms of simple reflections)
for real reflection groups then guarantees 
certain geometric properties advantageous for coding
(see~\cite{FNP}).  
We seek geometric analogs of minimal length coset representatives
for arbitrary (complex) isometry groups
that preserve a nested sequence of regions.

\subsection{The fundamental region and decoding region}
We use an analog
of a fundamental domain containing $x_0$:
\begin{definition}
The \emph{fundamental region} of a subgroup $\H \leq \G$ comprises
one vector closest to $\x_0$ from each $\H$-orbit (when a unique
closest vector exists)
after ignoring the isotrophy subgroup of $\x_0$: 
\[ \op{FR}(\H) = \{ \x \in \V : \Vert \x- \x_0 \Vert 
  < \Vert h\x- \x_0 \Vert 
\text{ whenever } h \in \H - \op{Stab}_{\H}(\x_0) \}\, .  \]  
\end{definition}
Thus, 
the vectors in the fundamental region $\op{FR}(\G)$ are precisely
those that decode to $I$ (or to a message equivalent to $I$)
under correct decoding. We likewise
define a decoding region for each group element $g$
to be the set of vectors that decode to $g$ (or any
message equivalent to $g$) under correct decoding
(with no ties, see Section~\ref{ties}):

\begin{definition}
The {\em decoding region} of $g \in \G$ is the set of
vectors that are closer to codeword $g^{-1}\x_0$
than any other codeword:
\[ \op{DR}(g) = \{ \x \in \V : \Vert g\x - \x_0 \Vert < 
    \Vert a \x - \x_0 \Vert \text{ whenever } a \notin \S g \}   \] 
for $\S=\op{Stab}_{\G}(\x_0)$.
\end{definition}

Thus an algorithm decodes robustly exactly when it decodes
every vector in $\op{DR}(g)$ to a group element equivalent to 
$g$.
Note that the decoding region for $g$ is just a translate
of the fundamental region for $\G$:
$$g\op{DR}(g) = \op{DR}(I) = \op{FR}(\G) 
\ .$$
Also note that the fundamental regions of subgroups of $\G$ are nested
in the reverse order:  If $\H \leq \K \leq \G$, then 
$\op{FR}(\H) \supseteq \op{FR}(\K)\supseteq \op{FR}(\G)$.  

\begin{remark}
{\em If $\x_0$ has full orbit, then
no vector in $\V$ fixed by a
nonidentity group element lies in a decoding region.  In particular,
if $\G$ is a real or complex reflection group,
the decoding regions exclude vectors on reflecting hyperplanes.
In fact, they give us an 
analog of (Weyl) {\em chambers}:
If $\G$ is a Coxeter group, then the
fundamental region is just a fundamental chamber that
contains $\x_0$ and 
the decoding region of $g$ in $\G$ is
just the chamber containing $g^{-1}\x_0
$.
}
\end{remark}

\subsection{The initial vector and minimum distance}
The initial vector $\x_0$ determines the isotrophy subgroup 
$\S = \op{Stab}_{\G}(\x_0)$ and the {\em minimum distance} of the code
defined by
\[ d_{min} = \min_{b \notin \S a} \Vert a^{-1} \x_0 - b^{-1} \x_0 \Vert
= \min_{a \notin \S} \Vert a \x_0 - \x_0 \Vert. \] 
As with any coding scheme, a large minimum distance is desirable. 
However, it turns out that for complex reflection groups, an initial vector 
that maximizes the minimum distance almost surely fails to satisfy some other 
important property, and in fact one must settle for a $d_{min}$ that 
is less than the maximum possible.
Note that if 
$\Vert \x - \x_0 \Vert < \frac 12 d_{min}$, then $\x \in \op{FR}(\G)$.

\subsection{Minimal, Region Minimal, and Greed Compatible}

We now give geometric notions of minimal coset representative.
The following simple definition guarantees that a coset leader 
maps a fundamental region to a new region that at least contains $\x_0$.

\begin{definition}
A coset leader $c$ for groups $\H\leq \K$
is {\em minimal} if 
$\x_0 \in c(\op{FR}(\H))$.
A set of coset leaders is {\em minimal} if all its elements are.
\end{definition}
We will see in Section~\ref{minimalitysuffices} that minimal coset leaders
are both necessary and sufficient
for correct decoding (with some noise) when the 
initial vector has full orbit.
We need a stronger version of minimality though:
\begin{definition}
A coset leader $c$ for groups $\H\leq \K$
is {\em region minimal}
if $\op{FR}(\K)\subseteq c(\op{FR}(\H))$.
A set of coset leaders is {\em region minimal} if all its elements are.
\end{definition}
Note that a minimal or region minimal coset leader
may not exist because
two elements of the same coset may both yield the minimum
distance, creating a tie; see Section~\ref{ties}.

We interpret these two notions of minimality directly in terms
of finding a coset representative that minimizes 
distance back to the initial vector:
\begin{lm}
A coset leader $c$ for groups $\H\leq \K$
is minimal if and only if 
it moves the initial vector $\x_0$ 
the least among 
other members of its coset (after excluding
the stabilizer subgroup of $\x_0$):
$$||c^{-1} \x_0 - \x_0|| < ||(ch)^{-1}\x_0 - \x_0||
$$
for all $h$ in $\H-\op{Stab}_{\H}(\x_0)$. 
\end{lm}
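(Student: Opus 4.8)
The plan is to unwind the definition of minimality---which says $\x_0 \in c(\op{FR}(\H))$---and translate the condition defining membership in the fundamental region $\op{FR}(\H)$ through the coset leader $c$. Recall that $\op{FR}(\H)$ consists of vectors $\x$ with $\|\x - \x_0\| < \|h\x - \x_0\|$ for all $h \in \H - \op{Stab}_{\H}(\x_0)$. So $\x_0 \in c(\op{FR}(\H))$ means $c^{-1}\x_0 \in \op{FR}(\H)$, i.e.\ $\|c^{-1}\x_0 - \x_0\| < \|h c^{-1}\x_0 - \x_0\|$ for all $h \in \H - \op{Stab}_{\H}(\x_0)$. The goal inequality is $\|c^{-1}\x_0 - \x_0\| < \|(ch)^{-1}\x_0 - \x_0\|$ for all such $h$, so the heart of the proof is just matching these two families of inequalities.

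First I would apply the isometry $c$ to the inequality $\|h c^{-1}\x_0 - \x_0\| $: since $c$ preserves the inner product, $\|h c^{-1}\x_0 - \x_0\| = \|c h c^{-1}\x_0 - c\x_0\|$, which is not obviously the right form. The cleaner route is to instead use $c^{-1}$ as an isometry on the target inequality: $\|(ch)^{-1}\x_0 - \x_0\| = \|h^{-1}c^{-1}\x_0 - \x_0\| = \|c^{-1}\x_0 - h\x_0\| $, after applying $h$. Hmm---that still isn't it. Let me instead be careful: the definition of $\op{FR}(\H)$ puts the group element on the \emph{first} argument, so I want to compare $\|\x - \x_0\|$ with $\|h\x - \x_0\|$ where $\x = c^{-1}\x_0$. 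That gives $\|c^{-1}\x_0 - \x_0\| < \|hc^{-1}\x_0 - \x_0\|$. Now apply the isometry $h^{-1}$ to the right side: $\|hc^{-1}\x_0 - \x_0\| = \|c^{-1}\x_0 - h^{-1}\x_0\| = \|(ch)\cdot c^{-1}\x_0 - \x_0\|$? No. The correct manipulation: $\|c^{-1}\x_0 - h^{-1}\x_0\| = \|h(c^{-1}\x_0 - h^{-1}\x_0)\| $ is wrong since $h$ isn't linear-plus-translation-friendly here, but $h$ \emph{is} linear, so $\|hc^{-1}\x_0 - \x_0\| = \|h(c^{-1}\x_0 - h^{-1}\x_0)\| = \|c^{-1}\x_0 - h^{-1}\x_0\|$, using linearity and that $h$ is an isometry. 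And $\|c^{-1}\x_0 - h^{-1}\x_0\| = \|h^{-1}c^{-1}\x_0 - h^{-1}h^{-1}\x_0\|$ is not helpful; rather, applying the isometry once more I want $\|(ch)^{-1}\x_0 - \x_0\| = \|h^{-1}c^{-1}\x_0 - \x_0\|$, and by linearity/isometry of $c$: $= \|c h^{-1}c^{-1}\x_0 - c\x_0\|$. So the map I really need is the substitution $h \mapsto ch^{-1}c^{-1}$ together with replacing $\x_0$ by $c\x_0$ on one side---this suggests the right bookkeeping goes through rephrasing everything symmetrically.

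The cleanest approach, and the one I would actually write, is: both sides of each inequality are distances from $\x_0$ to a point in the orbit $\{gc^{-1}\x_0 : g \in \H\}$ reindexed appropriately. Observe that $\{(ch)^{-1}\x_0 : h \in \H\} = \{h^{-1}c^{-1}\x_0 : h \in \H\} = \{hc^{-1}\x_0 : h \in \H\} = \H(c^{-1}\x_0)$, since $h \mapsto h^{-1}$ permutes $\H$. Thus minimality---$c^{-1}\x_0$ is the unique closest point to $\x_0$ in its $\H$-orbit modulo $\op{Stab}_\H(\x_0)$---says exactly that $\|c^{-1}\x_0 - \x_0\| < \|hc^{-1}\x_0 - \x_0\|$ for $h \notin \op{Stab}_\H(\x_0)$, and the target is the same statement with $h$ replaced by $h^{-1}$ (which ranges over the same set, and $h^{-1} \in \op{Stab}_\H(\x_0) \iff h \in \op{Stab}_\H(\x_0)$). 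I would also need to check the one subtlety that $h \in \op{Stab}_\H(\x_0)$ does \emph{not} force $hc^{-1}\x_0 = c^{-1}\x_0$ or give $\|hc^{-1}\x_0 - \x_0\| = \|c^{-1}\x_0 - \x_0\|$ in an unhelpful way---actually for those $h$ the strict inequality is being correctly excluded on both sides, so the families of excluded $h$ match up. The main (very minor) obstacle is just being scrupulous about which elements are excluded: confirming that $\H - \op{Stab}_\H(\x_0)$ is stable under $h \mapsto h^{-1}$ and that the orbit-reindexing $h \mapsto h^{-1}$ carries the "FR$(\H)$ membership" inequalities onto the "least-movement" inequalities bijectively. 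Once that is laid out, the equivalence is immediate in both directions, since every step is reversible.
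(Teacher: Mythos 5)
Your proposal is correct and is essentially the argument the paper has in mind (the paper states this lemma without proof, treating it as an immediate unwinding of the definition of $\op{FR}(\H)$): minimality says $c^{-1}\x_0\in\op{FR}(\H)$, i.e.\ $\Vert c^{-1}\x_0-\x_0\Vert<\Vert hc^{-1}\x_0-\x_0\Vert$ for all $h\in\H-\op{Stab}_{\H}(\x_0)$, and since $h\mapsto h^{-1}$ is a bijection of $\H-\op{Stab}_{\H}(\x_0)$ onto itself, this family of inequalities is identical to the stated one with $(ch)^{-1}\x_0=h^{-1}c^{-1}\x_0$. The final paragraph of your writeup is the whole proof; the exploratory false starts in the middle should be deleted in a clean version.
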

\begin{lm}
A coset leader $c$ for groups $\H\leq \K$ is region minimal if 
and only if it maps the 
fundamental region $\op{FR}(\K)$ closer to the initial vector $\x_0$ than
other members of its coset, after inverting:
$$\text{For any } \y \text{ in } \op{FR}(\K),\quad
||c^{-1}\y-\x_0|| < ||(ch)^{-1} \y-\x_0||
$$
for all $h$ in $\H-\op{Stab}_{\H}(\x_0)$.
\end{lm}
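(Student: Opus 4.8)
The plan is to unwind both sides of the claimed equivalence directly against the definition of the fundamental region, exactly as in the proof of the preceding lemma. Recall that $c$ is region minimal precisely when $\op{FR}(\K) \subseteq c(\op{FR}(\H))$, i.e.\ when $c^{-1}\y \in \op{FR}(\H)$ for every $\y \in \op{FR}(\K)$. So I would fix $\y \in \op{FR}(\K)$ and expand the membership $c^{-1}\y \in \op{FR}(\H)$ using the definition of $\op{FR}(\H)$: it says that $\Vert c^{-1}\y - \x_0 \Vert < \Vert h\,c^{-1}\y - \x_0 \Vert$ for every $h \in \H - \op{Stab}_{\H}(\x_0)$.

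The key step is to reindex this family of inequalities by replacing $h$ with $h^{-1}$. Since $\op{Stab}_{\H}(\x_0)$ is a subgroup of $\H$, the map $h \mapsto h^{-1}$ is a bijection of $\H - \op{Stab}_{\H}(\x_0)$ onto itself, so the displayed condition holds for all such $h$ if and only if $\Vert c^{-1}\y - \x_0 \Vert < \Vert h^{-1} c^{-1}\y - \x_0 \Vert$ holds for all such $h$. As $h^{-1}c^{-1} = (ch)^{-1}$, this is exactly the asserted inequality $\Vert c^{-1}\y - \x_0 \Vert < \Vert (ch)^{-1}\y - \x_0 \Vert$. Quantifying over all $\y \in \op{FR}(\K)$ then yields the equivalence in the lemma.

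There is essentially no obstacle here; the only points that deserve a word of care are that it is the subgroup property of $\op{Stab}_{\H}(\x_0)$ that makes the reindexing legitimate, and that the set $\H - \op{Stab}_{\H}(\x_0)$ is exactly the correct range of comparison. Indeed, for any $h \in \op{Stab}_{\H}(\x_0)$ one has $\Vert (ch)^{-1}\y - \x_0 \Vert = \Vert h^{-1}c^{-1}\y - h^{-1}\x_0 \Vert = \Vert c^{-1}\y - \x_0 \Vert$, since $h^{-1}$ is an isometry fixing $\x_0$; so strict inequality can never hold for such $h$, which is why these coset members are (and must be) excluded from the comparison, just as they are excluded from the defining condition of $\op{FR}(\H)$. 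Finally, taking $\y = \x_0$, which always lies in $\op{FR}(\K)$, recovers the preceding lemma, in agreement with the fact that region minimal coset leaders are minimal.
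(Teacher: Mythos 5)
Your proof is correct and is exactly the argument the paper intends: the lemma is stated there as a direct unwinding of the definition of $\op{FR}(\H)$, and your reindexing $h \mapsto h^{-1}$ (valid because $\op{Stab}_{\H}(\x_0)$ is a subgroup, so $\H - \op{Stab}_{\H}(\x_0)$ is closed under inversion) is the only step needed. The side remarks about why stabilizer elements must be excluded and about recovering the preceding lemma via $\y = \x_0$ are also accurate.
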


The next definition offers a forward looking notion:
A set of coset leaders is compatible with the greedy algorithm if
every element in the larger fundamental region of $\H$ is 
sent into the smaller fundamental region of $\K$
by {\em some} coset leader:
\begin{definition}
We call a set of coset leaders $\op{CL}$ for groups $\H\leq \K$
{\em \robust}
if there exists for every $\x \in \op{FR}(\H)$ a coset leader 
$c \in \op{CL}$ with $c\x \in \op{FR}(\K)$.
\end{definition}

We will see in Theorem~\ref{propAnne}
that if $\x_0$ is chosen with full orbit,
then region minimal representatives are \robust\ 
and vice versa.

\section{Greed pays....}\label{greedpays}
The subgroup decoding procedure uses a greedy algorithm, but
greedy algorithms don't always work:
The algorithm may not produce a group element
minimizing $\Vert a\r - \x_0 \Vert$ over {\em all} $a$ in $\G$. 
We now argue that
\robust\ coset leaders not only ensure that the subgroup decoding algorithm
will decode correctly, but that  
the algorithm is also robust.

For the remainder of the paper, the term \emph{subgroup sequence} will always
refer to a nested sequence of subgroups 
\[ \{ I \} = \G_0 < \G_1 < \ldots < \G_m=\G \ . \]

\begin{thm}\label{algorithmworks}
Fix any
finite unitary group $\G$ acting on $\V$,
initial vector $\x_0$ in $\V$,
subgroup sequence, and 
coset leader sets $\op{CL}(\G_k/\G_{k-1})$.
If every set $\op{CL}(\G_k/\G_{k-1})$ is \robust, 
then the subgroup decoding algorithm decodes robustly
(and thus also correctly with some noise).
\end{thm}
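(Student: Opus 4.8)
The plan is to show by induction on $k$ that the partial decoded element $d_k \cdots d_1$ behaves correctly, and the key invariant to track is that $\r_k = d_k \cdots d_1 \r$ always lands in the fundamental region $\op{FR}(\G_k)$, provided the original received vector $\r$ lies in a decoding region. This is precisely where greed compatibility is designed to be used: if $\r_{k-1} \in \op{FR}(\G_{k-1})$, then because $\op{CL}(\G_k/\G_{k-1})$ is greed compatible, there is \emph{some} coset leader $c \in \op{CL}(\G_k/\G_{k-1})$ with $c\r_{k-1} \in \op{FR}(\G_k)$. I must then argue that the coset leader $d_k$ actually chosen by the algorithm (the one minimizing $\|a\r_{k-1} - \x_0\|$) also sends $\r_{k-1}$ into $\op{FR}(\G_k)$. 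This should follow because $\op{FR}(\G_k) \subseteq \op{FR}(\G_{k-1})$ and the fundamental region is characterized by being closest to $\x_0$ in its $\G_{k-1}$-orbit — so the distance-minimizing choice among coset leaders of $\G_{k-1}$ in $\G_k$ lands in $\op{FR}(\G_k)$ whenever any coset leader does.

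\textbf{The base and conclusion.} The base case is $\r_0 = \r \in \op{FR}(\G_0) = \V$, which is vacuous since $\G_0 = \{I\}$. Actually I should be a little careful: I want the induction to start from the hypothesis that $\r$ lies in the decoding region $\op{DR}(g)$ of the sent message $g$, equivalently (translating by $g$) that $g\r \in \op{FR}(\G)$. At the end, after $m$ steps, I will have $g' = d_m \cdots d_1$ with $\r_m = g'\r \in \op{FR}(\G_m) = \op{FR}(\G)$. Since the fundamental region $\op{FR}(\G)$ consists of the unique vectors closest to $\x_0$ in their $\G$-orbit, and both $g'\r$ and $g\r$ lie in $\op{FR}(\G)$ while lying in the same $\G$-orbit, I conclude $g'\r = g\r$, hence $g'g^{-1} \in \S = \op{Stab}_{\G}(\x_0)$ — wait, that gives $g' \in \S g$ only after checking the side of the coset. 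Let me instead note $g'\r$ and $g\r$ both minimize distance to $\x_0$ over the orbit, so $g' r = g r$; then for any $a \notin \S g$ we would have $a\r \ne g\r$ and in fact $\|a\r - \x_0\| > \|g\r - \x_0\|$ because $\r \in \op{DR}(g)$, so $g'$ is equivalent to $g$. That is robust decoding by definition.

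\textbf{The main obstacle} is the middle step: verifying that the \emph{algorithm's} greedy choice $d_k$ (minimizing $\|a\r_{k-1}-\x_0\|$ over $a \in \op{CL}(\G_k/\G_{k-1})$) lands $\r_{k-1}$ in $\op{FR}(\G_k)$, not merely some coset leader. The subtlety is that $\op{FR}(\G_k)$ is defined relative to the $\G_k$-orbit (excluding the stabilizer), whereas the coset leaders only range over representatives of $\G_k/\G_{k-1}$; I need to combine this with the inductive fact $\r_{k-1} \in \op{FR}(\G_{k-1})$, i.e., $\r_{k-1}$ is already closest to $\x_0$ in its $\G_{k-1}$-orbit. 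Writing an arbitrary element of $\G_k$ as $ch$ with $c$ a coset leader and $h \in \G_{k-1}$, one has $\|(ch)\r_{k-1} - \x_0\| = \|c(h\r_{k-1}) - \x_0\|$, and since $h\r_{k-1}$ is no closer to $\x_0$ than $\r_{k-1}$ is (by the inductive hypothesis, handling the stabilizer case separately since it leaves $\x_0$ fixed), minimizing over coset leaders applied to $\r_{k-1}$ already minimizes over all of $\G_k$. Hence $d_k \r_{k-1}$ is the closest point to $\x_0$ in the full $\G_k$-orbit of $\r_{k-1}$, which — invoking greed compatibility to guarantee that this minimum is strictly attained inside $\op{FR}(\G_k)$ and not on its boundary — gives $\r_k = d_k\r_{k-1} \in \op{FR}(\G_k)$. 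Ties (where the minimum is attained by two coset leaders) are ruled out precisely when $\r_{k-1} \in \op{FR}(\G_{k-1})$ and the representatives are greed compatible, matching the remarks preceding the theorem; a careful treatment of this tie-exclusion is the one place where I expect the argument to require genuine attention rather than bookkeeping.
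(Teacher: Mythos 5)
Your proof takes essentially the same route as the paper's: induct on the invariant $\r_k \in \op{FR}(\G_k)$, use greed compatibility to show the algorithm's distance-minimizing coset leader lands the partial result in the next fundamental region, and conclude $g'g^{-1} \in \S$ from the defining property of $\op{FR}(\G)$. One small caution: the intermediate claim that ``$h\r_{k-1}$ is no closer to $\x_0$ than $\r_{k-1}$'' yields that minimizing over coset leaders already minimizes over all of $\G_k$ is a non sequitur (the isometry $c$ does not preserve distances to $\x_0$), but the correct justification is the one you invoke immediately after --- greed compatibility supplies some $c_0$ with $c_0\r_{k-1} \in \op{FR}(\G_k)$, which by definition of the fundamental region already attains the $\G_k$-orbit minimum, so the algorithm's choice agrees with it up to $\op{Stab}_{\G_k}(\x_0)$ and hence also lies in $\op{FR}(\G_k)$.
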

\begin{proof}
Assume a received vector $\r$ lies in $\op{FR}(g)$ for some $g$ in $\G$.
Inductively, $d_{k-1} \cdots d_1\r \in \op{FR}(\G_{k-1})$
and the algorithm chooses $d_k$ at the $k$-th stage
with $d_k\cdots d_1\r\in\op{GR}(\G_{k})$.
Thus $d_m \cdots d_1 \r$ is in the fundamental region of $\G$
and $\r$ decodes as $d_m\cdots d_1=g'$.
On the other hand, 
$\r = g^{-1}\x$ for some $\x \in \op{FR}(\G)$ since $\r \in \op{DR}(g)$.
Now $\x$ and $g'g^{-1}\x$ both lie in $\op{FR}(\G)$,
which implies (by the definition of fundamental region) that
$g'g^{-1} \in \S$. Thus $g' \in \S g$ and $g$ and $g'$ are equivalent.
Thus the subgroup decoding algorithm decodes robustly.
\end{proof}

We will verify in Section~\ref{gr1n}
that \robust\ coset leaders exist for the complex reflection
groups $\G(r,1,n)$ 
for an appropriate subgroup sequence and initial vector.
In the next section, we show how to salvage 
correct decoding with small noise even when \robust\ group leaders
can not be found.

We point out in the next theorem
that if the initial vector $\x_0$ has full orbit,
then \robust\ coset leaders are region minimal and vice versa.


\begin{thm}\label{propAnne}
Assume the initial vector $\x_0$ has full orbit.
A set of coset leaders is \robust\
if and only if it is region minimal.
\end{thm}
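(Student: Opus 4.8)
The plan is to unwind both conditions into statements about where coset leaders send the fundamental regions, using the fact that full orbit makes the stabilizer $\S$ and all $\op{Stab}_\H(\x_0)$ trivial, so the inequalities defining $\op{FR}(\H)$ become strict for \emph{every} nonidentity $h \in \H$. Throughout, fix a consecutive pair $\H \leq \K$ and a coset leader set $\op{CL} = \op{CL}(\K/\H)$, and recall that $\{c\,\op{FR}(\H) : c \in \op{CL}\}$ is, up to boundaries, a partition of the region $\op{FR}(\H)$ swept out by the cosets: more precisely, $\op{FR}(\H) = \bigcup_{c \in \op{CL}} \big(\op{FR}(\H) \cap c\,\op{FR}(\H)\big)$ modulo a measure-zero set, since every $\x \in \op{FR}(\H)$ lies in the $\H$-orbit of a unique closest-to-$\x_0$ point, which up to $\op{Stab}$ is the representative of its $\K$-orbit, reached by applying the coset leader for the coset of the $\H \to \K$ transition taking $\x$'s nearest $\H$-point to its nearest $\K$-point.

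First I would prove the forward direction: region minimal implies \robust. Assume each $c \in \op{CL}$ satisfies $\op{FR}(\K) \subseteq c\,\op{FR}(\H)$, equivalently (apply $c^{-1}$) $c^{-1}\op{FR}(\K) \subseteq \op{FR}(\H)$. Given $\x \in \op{FR}(\H)$, I need a $c$ with $c\x \in \op{FR}(\K)$. Consider the $\K$-orbit of $\x$: since $\x_0$ has full orbit, there is a unique $\y$ in $\op{FR}(\K)$ with $\y = k\x$ for some $k \in \K$. Write $k = c h$ in canonical form with $c \in \op{CL}$, $h \in \H$. Then $c\x = c h (ch)^{-1}k\x$... more directly: $\y = k\x = ch\x$, so $c(h\x) = \y \in \op{FR}(\K)$. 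Since $h\x$ need not be $\x$, I instead argue that the \emph{coset} $c\H$ is determined: among all $\K$-translates of $\x$, the one in $\op{FR}(\K)$ determines a coset $c\H$, and I claim that for this $c$, already $c\x \in \op{FR}(\K)$, because $\x$ being in $\op{FR}(\H)$ means $\x$ is the $\H$-minimal representative in its own $\H$-orbit, and applying the coset leader to the $\H$-minimal representative lands in $\op{FR}(\K)$ — this is exactly what region minimality buys, packaged via the lemma characterizing region minimal leaders as those for which $\|c^{-1}\y - \x_0\| < \|(ch)^{-1}\y - \x_0\|$ for all $\y \in \op{FR}(\K)$ and all $h \neq I$. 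Dualizing that inequality (replace $\y$ by $c\x$, note $\x \in \op{FR}(\H)$) yields $c\x \in \op{FR}(\K)$.

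For the converse, \robust\ implies region minimal: suppose for contradiction some $c \in \op{CL}$ is not region minimal, so there is $\y \in \op{FR}(\K) \setminus c\,\op{FR}(\H)$; that is, $c^{-1}\y \notin \op{FR}(\H)$, so some $h \in \H$, $h \neq I$, has $\|hc^{-1}\y - \x_0\| < \|c^{-1}\y - \x_0\|$, i.e. $\|(ch^{-1})^{-1}\y - \x_0\| < \|c^{-1}\y - \x_0\|$. Now let $\x := hc^{-1}\y$; then $\x \in \op{FR}(\H)$ would give, by \robustness, some $c' \in \op{CL}$ with $c'\x \in \op{FR}(\K)$, but $\x$ also has $c h^{-1}... $ — I would instead pick $\x$ \emph{inside} $\op{FR}(\H)$ close to $c^{-1}\y$ in the direction that keeps the strict inequality, using that $\op{FR}(\H)$ is open and $c^{-1}\y$ sits on or outside its boundary while still being $\H$-closest among... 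Actually the cleanest route: \robustness says $\op{FR}(\H) \subseteq \bigcup_{c \in \op{CL}} c^{-1}\op{FR}(\K)$ — wait, it says every $\x \in \op{FR}(\H)$ has some $c$ with $c\x \in \op{FR}(\K)$, i.e. $\op{FR}(\H) \subseteq \bigcup_c c^{-1}\op{FR}(\K)$. Combined with the always-true reverse nesting $c^{-1}\op{FR}(\K) \subseteq c^{-1}c\,\op{FR}(\H) = \op{FR}(\H)$ (since $\op{FR}(\K) \subseteq \op{FR}(\H)$... no, that needs $\K \geq \H$ giving $\op{FR}(\K) \subseteq \op{FR}(\H)$, which holds) — so each $c^{-1}\op{FR}(\K) \subseteq \op{FR}(\H)$, and these $|\op{CL}|$ translates cover $\op{FR}(\H)$; a volume/measure count (all $c^{-1}\op{FR}(\K)$ are isometric, $|\op{CL}| = [\K:\H]$, and $\op{vol}\op{FR}(\H) = [\K:\H]\cdot\op{vol}\op{FR}(\K)$ since $\K$-translates of $\op{FR}(\K)$ tile $\op{FR}(\H)$) forces the covering to be essentially disjoint and hence each inclusion $c^{-1}\op{FR}(\K) \subseteq \op{FR}(\H)$ to be, up to boundary, \emph{equality} onto the sub-tile $c^{-1}\op{FR}(\K)$ — but what region minimality demands is the single containment $\op{FR}(\K) \subseteq c\,\op{FR}(\H)$ for \emph{each fixed} $c$, which is $c^{-1}\op{FR}(\K) \subseteq \op{FR}(\H)$, and that I already have for free! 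So the real content of the converse is just the free nesting plus checking the strict-inequality/boundary bookkeeping.

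\textbf{Main obstacle.} The genuine difficulty is handling boundaries and ties carefully: fundamental regions are defined by strict inequalities, so they are open and omit measure-zero boundary pieces, and the assertion "$\op{FR}(\H)$ is tiled by $\K$-translates of $\op{FR}(\K)$" is only true off these boundaries. I expect the delicate step to be the converse direction, specifically showing that \robustness of \emph{every} $\x \in \op{FR}(\H)$ (not just a full-measure subset) promotes the generic tiling statement to the pointwise containment $\op{FR}(\K) \subseteq c\,\op{FR}(\H)$ with the correct strict inequalities — one must rule out a boundary vector $\y \in \op{FR}(\K)$ escaping $c\,\op{FR}(\H)$, which is where having $\x_0$ of full orbit (hence all isotropy trivial and all the defining inequalities uniformly strict) is essential, and where one invokes the two characterization lemmas to convert membership in a region into the chain of strict distance inequalities. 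The forward direction, by contrast, is a fairly direct application of the region-minimal characterization lemma to the unique $\op{FR}(\K)$-representative of a $\K$-orbit.
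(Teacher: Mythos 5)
Your direction ``region minimal $\Rightarrow$ greed compatible'' is essentially the paper's argument: given $\x\in\op{FR}(\H)$, take $k\in\K$ minimizing $\Vert k\x-\x_0\Vert$, write $k=ch$ with $c\in\op{CL}$ and $h\in\H$, observe $h\x=c^{-1}(k\x)\in c^{-1}\op{FR}(\K)\subseteq\op{FR}(\H)$ by region minimality, and then conclude $h=I$ because $\x$ and $h\x$ are two points of the same $\H$-orbit both lying in $\op{FR}(\H)$ and $\x_0$ has full orbit. You gesture at all of this, but you never state the last step ($\x,h\x\in\op{FR}(\H)\Rightarrow h=I$) explicitly; the vague ``dualizing the inequality'' phrase is where that step should go, and it is the only place the full-orbit hypothesis is actually used in this direction. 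That is a fixable looseness rather than a wrong turn.

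The other direction is genuinely broken. You assert that greed compatibility, ``combined with the always-true reverse nesting $c^{-1}\op{FR}(\K)\subseteq c^{-1}c\,\op{FR}(\H)=\op{FR}(\H)$,'' yields each containment $c^{-1}\op{FR}(\K)\subseteq\op{FR}(\H)$ ``for free.'' It does not: from $\op{FR}(\K)\subseteq\op{FR}(\H)$ one may conclude $c^{-1}\op{FR}(\K)\subseteq c^{-1}\op{FR}(\H)$, not $c^{-1}\op{FR}(\K)\subseteq\op{FR}(\H)$; the latter \emph{is} region minimality of $c$, i.e.\ exactly the statement to be proved, so the deduction is circular. The volume-counting fallback cannot rescue this: it yields at best an equality up to measure zero, whereas region minimality is a pointwise containment, and you acknowledge but do not resolve the boundary issue. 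The missing idea is an exchange argument that applies greed compatibility at one well-chosen point: given $\y\in\op{FR}(\K)$ and any $c\in\op{CL}$, choose $h\in\H$ minimizing $\Vert hc^{-1}\y-\x_0\Vert$, so that $\x=hc^{-1}\y\in\op{FR}(\H)$; greed compatibility supplies $d\in\op{CL}$ with $d\x=dhc^{-1}\y\in\op{FR}(\K)$; since $\y$ and $dhc^{-1}\y$ lie in the same $\K$-orbit and both belong to $\op{FR}(\K)$, full orbit forces $dhc^{-1}=I$, whence $c=dh$; as $c$ and $d$ are coset leaders of the same coset, $c=d$ and $h=I$, giving $c^{-1}\y=\x\in\op{FR}(\H)$ as required.
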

\begin{proof}
Assume that $\op{CL}=\op{CL}(\K/\H)$ is a \robust\ set
of coset representatives for $\K$ over $\H$ and take 
$\y \in \op{FR}(\K)$.  Let $c \in \op{CL}$.
Find $h$ minimizing
$\Vert hc^{-1}\y - \x_0 \Vert$ over all $h \in \H$.
Then $\x = hc^{-1}\y \in \op{FR}(\H)$, 
whence there is a coset leader $d \in \op{CL}$
such that $d\x \in \op{FR}(\K)$.  
As $\y$ and $dhc^{-1} \y$ both lie in $\op{FR}(\K)$, 
$dhc^{-1}=I$ and $dh=c$.  
Since $c$ and $d$ are both coset leaders, $c=d$ and $h=I$. 
Thus $c^{-1}\y \in \op{FR}(\H)$, as desired.

Conversely, assume that $\op{CL}$ is region minimal and
take $\x \in \op{FR}(\H)$.
Find $k$ minimizing $\Vert k\x - \x_0 \Vert$ over all $k\in \K$
and write $k=ch_0$ with $c\in\op{CL}$ and $h_0\in\H$.
Then $\y = k\x\in\op{FR}(\K)$, so 
$ c^{-1}\y = h_0\x\in\op{FR}(\H)$.  As both $\x$ and $h_0\x$
lie in $\op{FR}(\H)$ and $\x_0$ has full orbit, $h_0=I$.
Hence $c\x \in \op{FR}(\K)$
and $\op{CL}$ is \robust.  
\end{proof}

\section{...but minimality suffices!} \label{minimalitysuffices}

We now turn to the case when $\x_0$ has full orbit under $\G$.
For example, we choose $\x_0$ off a reflecting hyperplane
if $\G$ is a real or complex reflection group.  
We show that minimality of induced coset leaders is
both necessary and sufficient for the 
 subgroup decoding algorithm to decode
correctly, even with some noise.  
We begin by defining {\em induced coset leaders} with
the following elementary lemma:

\begin{lemma}
Fix sets  $\op{CL}(\G_k/\G_{k-1})$ 
of coset leaders for each consecutive pair 
in a subgroup sequence.
Then for any $k<\ell$, 
the set
$$\op{CL}(\G_{\ell}/\G_k)=\{c_{\ell} \cdots c_{k+1}:
c_i \in \op{CL}(\G_{i}/\G_{i-1}) 
\text{ for } k+1\leq i \leq \ell\}\ $$  
is a complete set of coset representatives for $\G_{\ell}$ over $\G_k$.
We call its elements 
the {\em induced coset leaders} for
$\G_{\ell}$ over $\G_k$.
\end{lemma}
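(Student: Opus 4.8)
The plan is to show two things: that the proposed set $\op{CL}(\G_\ell/\G_k)$ has the right cardinality, namely $[\G_\ell : \G_k]$, and that no two distinct elements of it lie in the same left coset of $\G_k$. Together these force it to be a complete, irredundant set of coset representatives. First I would handle the count. By the tower law for indices, $[\G_\ell : \G_k] = \prod_{i=k+1}^{\ell} [\G_i : \G_{i-1}] = \prod_{i=k+1}^{\ell} |\op{CL}(\G_i/\G_{i-1})|$, since each $\op{CL}(\G_i/\G_{i-1})$ is by hypothesis a complete set of left coset representatives for $\G_i$ over $\G_{i-1}$ and hence has exactly $[\G_i : \G_{i-1}]$ elements. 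The number of formal products $c_\ell \cdots c_{k+1}$ with $c_i$ ranging over $\op{CL}(\G_i/\G_{i-1})$ is visibly $\prod_{i=k+1}^\ell |\op{CL}(\G_i/\G_{i-1})|$, so it matches $[\G_\ell : \G_k]$ provided these products are pairwise distinct as group elements — but distinctness as group elements will follow from the coset-separation argument below, so I do not need it separately here.

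Next I would prove the key claim: if $c_\ell \cdots c_{k+1}$ and $c'_\ell \cdots c'_{k+1}$ lie in the same left coset of $\G_k$, then $c_i = c'_i$ for all $i$ with $k+1 \le i \le \ell$. This is where the real content sits, and the natural tool is induction on $\ell - k$, peeling off the top factor. Suppose $\G_k (c_\ell \cdots c_{k+1}) = \G_k (c'_\ell \cdots c'_{k+1})$, equivalently $(c'_\ell \cdots c'_{k+1})(c_\ell \cdots c_{k+1})^{-1} \in \G_k$. Since $\G_k \le \G_{\ell-1}$, this element also lies in $\G_{\ell-1}$. Now $c_\ell \cdots c_{k+1} = c_\ell \cdot (c_{\ell-1} \cdots c_{k+1})$ with the second factor in $\G_{\ell-1}$, and likewise for the primed product; so the two products lie in the left cosets $c_\ell \G_{\ell-1}$ and $c'_\ell \G_{\ell-1}$ respectively. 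Their ratio lying in $\G_{\ell-1}$ forces $c_\ell \G_{\ell-1} = c'_\ell \G_{\ell-1}$, and since $c_\ell, c'_\ell$ are both coset leaders from $\op{CL}(\G_\ell/\G_{\ell-1})$, the irredundancy of that set gives $c_\ell = c'_\ell$. Cancelling $c_\ell$ on the left reduces the relation to $\G_k(c_{\ell-1}\cdots c_{k+1}) = \G_k(c'_{\ell-1}\cdots c'_{k+1})$, and the inductive hypothesis finishes it. The base case $\ell = k+1$ is just the defining irredundancy of $\op{CL}(\G_{k+1}/\G_k)$.

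Having established that distinct tuples give distinct (indeed, non-left-coset-equivalent) elements, I get both that $|\op{CL}(\G_\ell/\G_k)| = [\G_\ell : \G_k]$ and that its elements represent pairwise distinct cosets; since there are exactly $[\G_\ell : \G_k]$ cosets, every coset is hit, so $\op{CL}(\G_\ell/\G_k)$ is a complete set of representatives. The main obstacle — such as it is — is really just setting up the induction cleanly so that the "peel off the top coset leader" step is justified by the containment $\G_k \le \G_{\ell-1}$; once that observation is in place the argument is mechanical. One should also note in passing that this simultaneously records the uniqueness of the canonical form $g = c_m \cdots c_1$ already used in Section~\ref{description}, which is the case $k = 0$, $\ell = m$.
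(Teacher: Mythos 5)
The paper gives no proof of this lemma at all (it is introduced as an ``elementary lemma,'' and the corresponding canonical-form factorization is simply asserted in Section~2), so there is no argument of record to compare against. Your route --- count cosets via the tower law, then show by induction on $\ell-k$ that distinct tuples of coset leaders represent distinct cosets of $\G_k$ by peeling off the top factor using the containment $\G_k \le \G_{\ell-1}$ --- is exactly the natural one, and its skeleton is sound.

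There is, however, a left/right coset mix-up that invalidates the key step as literally written. The paper's coset leaders represent \emph{left} cosets $a\H$, and your prose correctly says ``same left coset of $\G_k$,'' but the formula you write, $\G_k(c_\ell\cdots c_{k+1})=\G_k(c'_\ell\cdots c'_{k+1})$, i.e.\ $ba^{-1}\in\G_k$ for $a=c_\ell\cdots c_{k+1}$ and $b=c'_\ell\cdots c'_{k+1}$, is the \emph{right}-coset condition. With that condition, writing $a=c_\ell h$ and $b=c'_\ell h'$ with $h,h'\in\G_{\ell-1}$, you obtain $ba^{-1}=c'_\ell h'h^{-1}c_\ell^{-1}\in\G_{\ell-1}$, which only places $c'_\ell$ in the double coset $\G_{\ell-1}c_\ell\G_{\ell-1}$ and does \emph{not} force $c_\ell\G_{\ell-1}=c'_\ell\G_{\ell-1}$; similarly, ``cancelling $c_\ell$ on the left'' of a right coset $\G_k c_\ell h$ is not a legitimate move. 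The repair is one line: use the left-coset condition $a^{-1}b\in\G_k\subseteq\G_{\ell-1}$. Then $a^{-1}b=h^{-1}\bigl(c_\ell^{-1}c'_\ell\bigr)h'$ forces $c_\ell^{-1}c'_\ell\in\G_{\ell-1}$, hence $c_\ell=c'_\ell$ by irredundancy of $\op{CL}(\G_\ell/\G_{\ell-1})$, and then $a^{-1}b=h^{-1}h'\in\G_k$ is exactly the inductive hypothesis for the truncated products. With that correction, your counting argument and induction together give the lemma.
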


We now give a necessary condition for correct decoding.
The next theorem explains that 
just as coset leaders are chosen to be the 
codewords of minimum Hamming weight in linear block coding, 
so too should coset leaders be chosen minimum
in a geometric sense in subgroup decoding.

\begin{thm}\label{minimalnecessary}
Fix any
finite unitary group $\G$ acting on $\V$,
initial vector $\x_0$ in $\V$ of full orbit,
subgroup sequence, and 
coset leader sets $\op{CL}(\G_k/\G_{k-1})$.
Minimal coset leaders are necessary for correct decoding:
If the subgroup decoding algorithm decodes
correctly, then
the induced coset leaders $\op{CL}(\G_{\ell}/\G_{k})$ are minimal
for all $k<\ell$.
\end{thm}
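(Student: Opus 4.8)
The plan is to prove the contrapositive in a local form: show that if some induced coset leader $c \in \op{CL}(\G_\ell/\G_k)$ fails to be minimal, then one can construct a transmitted codeword that the subgroup decoding algorithm decodes incorrectly, even with arbitrarily small noise. Since $\x_0$ has full orbit, the stabilizer $\S$ is trivial, so ``minimal'' for $c$ with respect to $\G_{k} \le \G_\ell$ means simply $\x_0 \in c(\op{FR}(\G_k))$, equivalently (by the first Lemma interpreting minimality) that $\|c^{-1}\x_0 - \x_0\| < \|(ch)^{-1}\x_0 - \x_0\|$ for all $h \in \G_k \setminus \{I\}$. Failure of minimality therefore means there is some $h \in \G_k$, $h \ne I$, with $\|(ch)^{-1}\x_0 - \x_0\| \le \|c^{-1}\x_0 - \x_0\|$.

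First I would set up the codeword to transmit. Write $g = ch \in \G_\ell$; using the canonical form for $\G_\ell$ over $\G_k$, extend $g$ to a full group element $\tilde g = g \cdot (\text{leaders below } \G_k) \in \G$ — actually it is cleanest to choose the transmitted message to be the element whose induced-coset-leader decomposition down to level $k$ is exactly $g = ch$, padded arbitrarily (say by $I$'s) below level $k$ and above level $\ell$. Transmit $\x = g^{-1}\x_0$ and receive $\r = \x$ (no noise). Now I trace the algorithm. The key point is that the induced leaders $\op{CL}(\G_\ell/\G_k)$ are themselves products $c_\ell \cdots c_{k+1}$ of the elementary coset leaders, and the subgroup decoding algorithm, run through levels $k+1, \ldots, \ell$, is exactly choosing one induced leader at those combined steps — more precisely, the composite $d_\ell \cdots d_{k+1}$ produced after running steps $k+1$ through $\ell$ is an element of $\op{CL}(\G_\ell/\G_k)$, and which one it is depends only on $\r_k \in \G_k\x$. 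So after step $k$ (which, for the padded message with trivial leaders below $\G_k$, recovers $\r_k = \x = g^{-1}\x_0 = (ch)^{-1}\x_0$ exactly), the algorithm at the combined level picks the induced leader $a$ minimizing $\|a\,\r_k - \x_0\| = \|a(ch)^{-1}\x_0 - \x_0\|$. Taking $a = I$ (which is an induced leader, since $I$ is among every elementary coset-leader set and products of identities give $I$) yields value $\|(ch)^{-1}\x_0 - \x_0\|$, while the ``correct'' choice $a = ch = g$ would be needed to return to $\x_0$. By the non-minimality hypothesis, $\|(ch)^{-1}\x_0 - \x_0\| \le \|c^{-1}\x_0 - \x_0\|$; I then need the strict inequality $\|(ch)^{-1}\x_0 - \x_0\| < \|g(g)^{-1}\x_0 - \x_0\|$ — wait, $g g^{-1}\x_0 = \x_0$ so that distance is $0$ — so the subtlety is that the algorithm will \emph{never} choose $a = g$ at the combined level when $I$ does strictly better, forcing the decoded element to disagree with $g$. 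Full orbit then forbids the disagreement from being absorbed by $\S$, so decoding is incorrect.

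The main obstacle I anticipate is bookkeeping: making precise that the subgroup decoding algorithm, restricted to levels $k{+}1,\dots,\ell$ and started from a vector lying in $\G_k\,\x$, produces a composite that is genuinely an element of the induced set $\op{CL}(\G_\ell/\G_k)$ and is chosen to minimize distance to $\x_0$ over that induced set. This is morally true — it is just associativity of the greedy choice across a block of consecutive levels — but it requires care because the algorithm commits to each $d_i$ greedily rather than optimizing the whole block at once, so one does not a priori get the minimizer over $\op{CL}(\G_\ell/\G_k)$. To sidestep this I would instead argue \emph{directly} with the composite $c = c_\ell\cdots c_{k+1}$ viewed as the induced leader the algorithm \emph{would have to} select in order to decode $g = ch$ correctly, and show that the non-minimality inequality makes selecting $c$ impossible at the first level where it matters: concretely, pad the message so that the relevant disagreement surfaces at a single elementary step, i.e.\ work with the pair $\G_{k}\le\G_{k+1}$ if the non-minimal element already lives there, and otherwise reduce to that case by the tower $\op{CL}(\G_\ell/\G_k)$ construction. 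Once the obstruction is isolated at one step, the standard argument (a strictly-better competing coset leader is picked, the decoded element lies in a different $\S g$-coset, full orbit gives a contradiction) finishes it, and passing to the contrapositive yields the statement for all $k < \ell$.
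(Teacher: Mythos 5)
Your high-level plan — prove the contrapositive by exhibiting, for a non-minimal induced leader $c\in\op{CL}(\G_\ell/\G_k)$ with witness $h\in\G_k\setminus\{I\}$ satisfying $\Vert (ch)^{-1}\x_0-\x_0\Vert\le\Vert c^{-1}\x_0-\x_0\Vert$, a codeword that decodes incorrectly — is the same as the paper's, and you identify the correct witness and inequality. But the execution has a genuine gap at the first concrete step. The element $g=ch$ cannot be ``padded by $I$'s below level $k$'': its canonical form at levels $1,\dots,k$ is forced to be the canonical form of $h$, which is nontrivial because $h\ne I$. So your assertion that the algorithm arrives at $\r_k=(ch)^{-1}\x_0$ after step $k$ is precisely what correct decoding forbids; if the algorithm chose $I$ at every step up to $k$, it would already have output an element whose canonical form disagrees with that of $ch$, and you have not shown that it does so. The ensuing analysis of the block $k+1,\dots,\ell$ then compares $a=I$ against $a=ch$, but $ch$ is not an element of $\op{CL}(\G_\ell/\G_k)$, and — as you yourself concede — the greedy composite over that block need not minimize over the induced set. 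Your proposed repair, localizing the failure to a single elementary pair $\G_k\le\G_{k+1}$, cannot work in general: non-minimality of an induced leader need not be witnessed at any elementary level. Theorem~\ref{analyziz}(4) (the $\G_{25}$ example) is exactly a situation where every $\op{CL}(\G_k/\G_{k-1})$ is minimal yet some induced set is not.

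The paper extracts the contradiction from the \emph{low} levels rather than from the block $k+1,\dots,\ell$. Writing the witness in canonical form $h=c_k\cdots c_1$ with $c_j\in\op{CL}(\G_j/\G_{j-1})$, one feeds the algorithm the noiseless received vectors $\r_j=(c\,c_k\cdots c_j)^{-1}\x_0$. Correct decoding of $\r_j$ forces the algorithm to choose $I$ at steps below $j$ and then to prefer $c_j$ over the competing leader $I$ at step $j$, giving $\Vert c_j\r_j-\x_0\Vert\le\Vert \r_j-\x_0\Vert$, i.e.
$\Vert (c\,c_k\cdots c_{j+1})^{-1}\x_0-\x_0\Vert\le\Vert (c\,c_k\cdots c_{j})^{-1}\x_0-\x_0\Vert$.
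Chaining these over $j$ yields $\Vert c^{-1}\x_0-\x_0\Vert\le\Vert (ch)^{-1}\x_0-\x_0\Vert$ with at least one inequality strict since $h\ne I$, which is the reverse of the non-minimality inequality. This comparison of $c_j$ against $I$ at steps $1,\dots,k$ is the missing mechanism in your write-up; without it the argument does not close.
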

\begin{proof}
Fix some index $k$ and suppose $c_k$ in $\op{CL}(\G_{k}/\G_{k-1})$
is not minimal.  Then there exists some nonidentity element
$h$ in $\G_{k-1}$ with
\begin{equation}\label{contradict}
||c_k^{-1}\x_0-\x_0||\geq ||(c_kh)^{-1}\x_0-\x_0||
=||(c_kc_{k-1} \cdots c_1)^{-1}\x_0-\x_0||,
\end{equation}
where $h=c_{k-1}\cdots c_1$ for some
$c_i$ in $\op{CL}(\G_i/\G_{i-1})$.
Fix some $j$ with $1\leq j\leq k-1$ and suppose
$\r_{j}=(c_kc_{k-1}\cdots c_j)^{-1}\x_0$ 
is a received vector.  As $\r_j$ correctly decodes
to group element $c_kc_{k-1}\cdots c_j$, the algorithm
chooses coset leader $c_j$ among all coset
leaders in $\op{CL}(\G_j/\G_{j-1})$ (including the coset leader
$I$) at the $j$-th step. Thus
$$
\begin{aligned}
|| (c_kc_{k-1}\cdots c_{j+1})^{-1}\x_0-\x_0||
&=||c_j(c_kc_{k-1}\cdots c_{j})^{-1}\x_0-\x_0||\\
&\leq ||I(c_kc_{k-1}\cdots c_{j})^{-1}\x_0-\x_0||\, .
\end{aligned}
$$
This gives a nested sequence of inequalities
as $j$ ranges from $1$ to $k-1$,
$$
||c_k^{-1}\x_0-\x_0||
\leq
||(c_kc_{k-1})^{-1}\x_0-\x_0||
\leq \ldots \leq
||(c_kc_{k-1}\cdots c_1)^{-1}\x_0-\x_0||
$$
with at least one inequality
strict as $h\neq I$,
contradicting inequality~(\ref{contradict}) above.
We replace $c_k$ by any $c_{\ell}c_{\ell-1}\cdots c_k$, 
where each $c_i$ lies in $\op{CL}(\G_i/\G_{i-1})$,
in the above argument to see
that induced coset leaders are minimal as well.
\end{proof}

In the last section, we saw that region minimal coset leaders
guarantee robust decoding (Theorem~\ref{algorithmworks}). 
However, it is not always easy to determine the fundamental region 
of a subgroup $\G_k$ in the subgroup sequence of a complicated group.
Even worse, region minimal coset leaders may fail to exist.
The next theorem shows that the decoding algorithm corrects for small noise when we weaken the hypothesis on coset leaders
but shrink the region of correct decoding to compensate.
We may merely insist that induced coset leaders be minimal, %
a condition which is straightforward to test but
fails for many choices of subgroup sequences
(see Section~\ref{othergroups}).

\begin{thm} \label{shrinkregion}
Fix any
finite unitary group $\G$ acting on $\V$,
initial vector $\x_0$ in $\V$ with full orbit,
subgroup sequence, and
coset leader sets $\op{CL}(\G_k/\G_{k-1})$.
If every set of induced 
coset leaders for $\G$ over $\G_k$
is minimal (for $1 \leq k < m$),
then the subgroup decoding algorithm decodes
correctly with some noise.
\end{thm}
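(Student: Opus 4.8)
The goal is to produce a fixed $\delta > 0$ so that any received vector $\r$ within distance $\delta$ of a transmitted codeword $g^{-1}\x_0$ decodes to an element equivalent to $g$. By the translation $g\op{DR}(g) = \op{FR}(\G)$, it suffices to handle the case $g = I$: show that if $\|\r - \x_0\| < \delta$, then the algorithm, run on $\r$, outputs $I$ (or an element of $\S$). The plan is to track the sequence of partial decodings $\r_0 = \r, \r_1 = d_1\r_0, \ldots, \r_m = d_m\cdots d_1\r$ and argue that, provided $\delta$ is small enough, each $d_k$ must be the identity coset leader, so that $g' = d_m\cdots d_1 \in \S$ as desired.

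First I would set up the key compactness/finiteness estimate. The induced coset leaders $\op{CL}(\G/\G_k)$ are minimal by hypothesis, meaning $\|c^{-1}\x_0 - \x_0\| < \|(ch)^{-1}\x_0 - \x_0\|$ for every such $c \ne I$ (viewing $c$ as an induced coset leader from $\G_k$ up to $\G$, with $h$ ranging appropriately — one needs $I$ itself to be minimal, which is the statement that $\x_0 \in \op{FR}(\G_k)$ for the cosets involved, i.e.\ that no nonidentity induced leader moves $\x_0$ closer than $I$ does). Since there are only finitely many coset leaders and finitely many subgroups, there is a uniform gap: a single $\epsilon > 0$ such that for every $k$ and every non-identity induced leader $c \in \op{CL}(\G/\G_{k})$ (or the relevant intermediate leaders), $\|c\x_0 - \x_0\|$ — equivalently the relevant distance comparison — exceeds $\|\x_0 - \x_0\| = 0$ by at least $\epsilon$. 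More precisely, minimality of the induced leaders $\op{CL}(\G/\G_k)$ says that for $a \ne I$ in $\op{CL}(\G/\G_k)$ we have $\|a^{-1}\x_0 - \x_0\| > 0$, and by finiteness this quantity is $\ge \epsilon$ for some fixed $\epsilon > 0$.

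Next comes the inductive heart of the argument. Suppose inductively that $\r_{k-1} = d_{k-1}\cdots d_1 \r$ with all $d_1 = \cdots = d_{k-1} = I$, so $\r_{k-1} = \r$ and $\|\r_{k-1} - \x_0\| < \delta$. At step $k$ the algorithm picks $d_k \in \op{CL}(\G_k/\G_{k-1})$ minimizing $\|a\r_{k-1} - \x_0\|$. Testing $a = I$ gives value $< \delta$, so $\|d_k\r_{k-1} - \x_0\| < \delta$, hence $\|d_k^{-1}\x_0 - \x_0\| \le \|d_k^{-1}\x_0 - d_k^{-1}(d_k\r_{k-1})\| + \|d_k\r_{k-1} - \x_0\| = \|\x_0 - d_k\r_{k-1}\| + \|d_k\r_{k-1} - \x_0\|$; wait — more directly, since $d_k$ is an isometry, $\|d_k^{-1}\x_0 - \r_{k-1}\| = \|\x_0 - d_k\r_{k-1}\| < \delta$, so $\|d_k^{-1}\x_0 - \x_0\| < 2\delta$ via the triangle inequality through $\r_{k-1}$. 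Here is where I must be careful: $d_k$ is only a coset leader of $\G_k$ over $\G_{k-1}$, not an induced leader up from $\G_0$, so "minimality of induced leaders $\op{CL}(\G/\G_k)$" does not directly bound $\|d_k^{-1}\x_0 - \x_0\|$. The fix is to instead compare $d_k$ against the identity using minimality of the induced leaders $\op{CL}(\G/\G_{k-1})$: both $d_k$ and $I$ are induced leaders for $\G$ over $\G_{k-1}$ (after right-multiplying by appropriate leaders from $\G_{k-1}$), and minimality forces $d_k = I$ once $\|d_k^{-1}\x_0 - \x_0\| < 2\delta < \epsilon$. Choosing $\delta = \epsilon/2$ closes the induction: every $d_k$ is forced to be $I$, so $g' = I \in \S$.

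The main obstacle I anticipate is exactly this bookkeeping: at step $k$ the algorithm only searches over $\op{CL}(\G_k/\G_{k-1})$, yet the hypothesis is phrased in terms of induced leaders of $\G$ over $\G_k$. Reconciling these requires noting that $\op{CL}(\G_k/\G_{k-1}) \subseteq \op{CL}(\G/\G_{k-1})$ after adjoining trailing identity leaders, and that minimality of $\op{CL}(\G/\G_{k-1})$ therefore applies to each $d_k$. One also needs the subtle point that minimality of a \emph{set} of induced leaders includes the assertion that $I$ is the (strict) minimizer within its own coset — i.e.\ that $\x_0$ genuinely lies in the fundamental region of the relevant intermediate group — which is what lets us conclude $d_k = I$ rather than merely "$d_k$ is small." Once the uniform gap $\epsilon$ is extracted from finiteness, the rest is a clean induction on $k$ with $\delta := \epsilon/2$.
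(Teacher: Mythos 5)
There is a genuine gap at the very first step of your plan: the reduction to the case $g=I$ is not valid. The identity $g\op{DR}(g)=\op{FR}(\G)$ says only that the decoding regions are translates of one another; it does not make the subgroup decoding algorithm equivariant under the group action. The algorithm's $k$-th step searches the fixed set $\op{CL}(\G_k/\G_{k-1})$ applied to the current partial decoding $\r_{k-1}$, and replacing $\r$ by $g\r$ does not transform one run of the algorithm into another. Consequently, knowing that inputs near $\x_0$ decode to $I$ tells you nothing about inputs near $g^{-1}\x_0$ for $g\ne I$, which is where all the content of the theorem lies: the algorithm must recover, one factor at a time, the canonical form $g=c_m\cdots c_1$. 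A telling symptom is that your argument for the $g=I$ case uses only that $\x_0$ has full orbit (so that $\Vert a\x_0-\x_0\Vert$ is bounded below away from $0$ for $a\ne I$), and never genuinely invokes the minimality hypothesis; but Theorem~\ref{minimalnecessary} shows that minimality of the induced leaders is \emph{necessary} for correct decoding, so no argument that does not use it can be complete.

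What is actually needed, and what the paper does, is to keep $g=c_m\cdots c_1$ general and quantify the hypothesis as follows: for each $k<m$ set
$\delta_k=\min\bigl\{\Vert c_m\cdots c_{k+1}h\x_0-\x_0\Vert-\Vert c_m\cdots c_{k+1}\x_0-\x_0\Vert\bigr\}$,
the minimum taken over all choices of leaders $c_{k+1},\dots,c_m$ and all $h\in\G_k-\G_{k-1}$, and $\delta_m=d_{min}$. Minimality of the induced leaders $\op{CL}(\G/\G_k)$ is precisely the statement that each $\delta_k$ is positive. Taking $\Vert\r-g^{-1}\x_0\Vert<\delta/2$ with $\delta=\min_k\delta_k$, a two-sided triangle-inequality estimate at step $k$ shows that $c_k$ strictly beats any other leader $d\in\op{CL}(\G_k/\G_{k-1})$, because $c_m\cdots c_{k+1}(c_kd^{-1})$ and $c_m\cdots c_{k+1}$ differ by an element of $\G_k-\G_{k-1}$ and hence their distances to $\x_0$ (after inverting) differ by at least $\delta_k$. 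This is the comparison your sketch never makes: the relevant gap is not $\Vert d^{-1}\x_0-\x_0\Vert$ versus $0$, but the difference between the distances to $\x_0$ of two codewords lying over the same coset of $\G_{k-1}$ after the as-yet-undecoded prefix $c_m\cdots c_{k+1}$ has been taken into account. Your concluding remarks about reconciling $\op{CL}(\G_k/\G_{k-1})$ with $\op{CL}(\G/\G_{k-1})$ gesture at this issue but do not resolve it.
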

\begin{proof}
Let $\delta_m = d_{min}$, the minimum distance of the code, and 
for $1 \leq k < m$, define
\[  
\delta_k = \min \big\{ \| 
c_m \cdots c_{k+1}h \x_0 - 
\x_0 \| 
-
\| c_m \cdots c_{k+1}\x_0 - 
\x_0 \| \big\},
\]
taking the minimum over all 
$c_i \in \op{CL}(\G_i/\G_{i-1})$ for $k< i\leq m$
and over all $h \in \G_k - \G_{k-1}$.
Set $\delta = \min_{1 \leq k \leq m} \delta_k$.
Since each $\op{CL}(\G/\G_{k})$ is minimal,
each $\delta_k$ is nonzero and thus $\delta$ is nonzero.

Suppose $g$ in $\G$ is a message with transmitted vector
$g^{-1}\x_0$.  Write $g$ uniquely as
$g=c_m \cdots c_1$ with each $c_i$ in $\op{CL}(\G_i/\G_{i-1})$.
Assume the received vector $\r$ is within $\delta/2$
of the transmitted vector.  Then, for $\r_0 = \r$,
\[
\| c_1\r_0 - (c_m \cdots c_2)^{-1} \x_0 \| =
\| \r_0 - (c_m \cdots c_1)^{-1} \x_0 \| < \delta/2.
\]
By the triangle inequality,
\begin{align*}
\| c_1\r_0 - \x_0 \| &\leq \| c_1\r_0 - (c_m \cdots c_2)^{-1}\x_0 \| +
\| (c_m \cdots c_2)^{-1}\x_0 - \x_0 \| \\
&< \frac {\delta}2 + \| (c_m \cdots c_2)^{-1}\x_0 - \x_0 \| 
\end{align*}
while for $d \in \op{CL}(\G_1/\G_0) - \{ c_1 \}$,
\begin{align*}
\| d\r_0 - \x_0 \| &\geq  - \| d\r_0 - d(c_m \cdots c_1)^{-1}\x_0 \| +
 \| d(c_m \cdots c_1)^{-1}\x_0  - \x_0 \| \\
    &= - \| \r_0 - (c_m \cdots c_1)^{-1}\x_0 \| 
     +  \| (c_m \cdots c_1d^{-1})^{-1}\x_0  - \x_0 \| \\
    & > - \delta/2  +  \| (c_m \cdots c_2)^{-1}\x_0  - \x_0 \| + \delta_1 \\
    &\geq \delta/2 + \| (c_m \cdots c_2)^{-1}\x_0 - \x_0 \| 
\end{align*}
because $\delta \leq \delta_1$.  Hence the subgroup decoding algorithm, 
which chooses a coset leader $c$ minimizing $\| c\r_0 - \x_0 \|$,
will choose $c=c_1$.

Now let $\r_1 = c_1 \r_0$ and note that 
\[
\| \r_1 - (c_m \cdots c_2)^{-1} \x_0 \| =
\| \r_0 - (c_m \cdots c_1)^{-1} \x_0 \| < \delta/2.
\]
An analogous argument shows that the subgroup decoding algorithm will choose the coset leader $c_2$
(since the product of $c_2$ with the inverse of any other coset leader in
$\op{CL}(\G_2/\G_1)$ lies in $\G_2-\G_1$) at the second stage.  Recursively, 
the algorithm chooses $c_3, \ldots, c_{m-1}$ as
coset leaders minimizing distance to $\x_0$.
For the last step, we set $\r_{m-1} = c_{m-1} \cdots c_1 \r_0$ 
and note that
$$\| c_m \r_{m-1} - \x_0 \| < \delta/2\ $$
while for any other coset leader $d\in \op{CL}(\G_m/\G_{m-1})$,
$$\| d\r_{m-1}-\x_0\| 
\geq 
- \| d\r_{m-1}-dc_m^{-1}\x_0 \|+ \|dc_m^{-1}\x_0-\x_0\|
> - \delta/2 + d_{min}
\geq \delta/2.
$$
Hence the algorithm chooses $c_m$ as well and outputs
$g'=g$ as the decoded message.
\end{proof}


The last theorem together with Theorem~\ref{minimalnecessary}
now gives us necessary and sufficient conditions for
correct decoding:
\begin{cor}\label{correctdecoding}
Choose an initial vector $\x_0$ with full orbit under $\G$.
Correct subgroup decoding occurs if and only if
induced coset leaders $\op{CL}(\G/\G_k)$ are minimal for all $k$. 
In this case, subgroup decoding decodes correctly 
with some noise.
\end{cor}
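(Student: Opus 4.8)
The plan is to derive Corollary~\ref{correctdecoding} directly from the two preceding results, Theorem~\ref{minimalnecessary} and Theorem~\ref{shrinkregion}, with essentially no new work. The point is that together these theorems sandwich the notion of correct decoding between two copies of the same minimality condition on induced coset leaders. First I would unwind the statement: ``correct subgroup decoding occurs'' means that whenever the codeword $g^{-1}\x_0$ is transmitted with no noise, the algorithm outputs a message equivalent to $g$; and because $\x_0$ has full orbit, $\S$ is trivial, so ``equivalent to $g$'' just means ``equal to $g$.'' I would remark on this simplification at the outset since it streamlines the rest.

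For the forward direction ($\Rightarrow$), suppose correct subgroup decoding occurs. Theorem~\ref{minimalnecessary} applies verbatim: its hypotheses are exactly a finite unitary group, an initial vector of full orbit, a subgroup sequence, and coset leader sets, and its conclusion is that the induced coset leaders $\op{CL}(\G_\ell/\G_k)$ are minimal for all $k<\ell$. Specializing $\ell=m$ gives that $\op{CL}(\G/\G_k)$ is minimal for every $k$, which is the stated condition. For the converse ($\Leftarrow$), suppose $\op{CL}(\G/\G_k)$ is minimal for all $k$ with $1\le k<m$. This is precisely the hypothesis of Theorem~\ref{shrinkregion}, whose conclusion is that the subgroup decoding algorithm decodes correctly with some noise. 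Since decoding correctly with some noise is (trivially, by taking $\r = g^{-1}\x_0$ itself, so that $\|\r - g^{-1}\x_0\| = 0 < \delta$) a strengthening of decoding correctly with no noise, correct subgroup decoding occurs. The same application of Theorem~\ref{shrinkregion} also yields the final sentence of the corollary, that decoding is correct with some noise.

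The only genuine subtlety, and the one step I would be careful about, is matching the indexing in the two theorems: Theorem~\ref{minimalnecessary} asserts minimality of $\op{CL}(\G_\ell/\G_k)$ for \emph{all} pairs $k<\ell$, whereas the corollary and Theorem~\ref{shrinkregion} only mention the induced leaders for $\G$ over $\G_k$. I would note that minimality of the full collection $\{\op{CL}(\G_\ell/\G_k) : k<\ell\}$ and minimality of just the ``top'' family $\{\op{CL}(\G/\G_k) : k\}$ are in fact equivalent here, because an induced coset leader $c_\ell\cdots c_{k+1}$ for $\G_\ell$ over $\G_k$ is a prefix-truncation of some induced leader $c_m\cdots c_{k+1}$ for $\G$ over $\G_k$, and the minimality inequality $\|c^{-1}\x_0 - \x_0\| < \|(ch)^{-1}\x_0-\x_0\|$ for $h \in \G_k - \op{Stab}_{\G_k}(\x_0)$ only involves the leftmost factors; alternatively, one can simply invoke Theorem~\ref{minimalnecessary} for the weaker ``top-family'' conclusion, which is all the corollary needs. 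With that bookkeeping settled, the proof is a two-line citation, so I would keep it terse rather than reprove anything.

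Thus the proof I would write is essentially: ``Since $\x_0$ has full orbit, equivalence of messages is equality. If correct decoding occurs, Theorem~\ref{minimalnecessary} (with $\ell=m$) shows $\op{CL}(\G/\G_k)$ is minimal for all $k$. Conversely, if each $\op{CL}(\G/\G_k)$ is minimal, Theorem~\ref{shrinkregion} shows the algorithm decodes correctly with some noise, hence in particular with no noise. The last assertion is the conclusion of Theorem~\ref{shrinkregion}.'' I do not expect any real obstacle; the work was all done in establishing Theorems~\ref{minimalnecessary} and~\ref{shrinkregion}.
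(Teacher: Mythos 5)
Your proposal is correct and matches the paper's own (implicit) proof exactly: the paper presents this corollary as an immediate consequence of Theorem~\ref{minimalnecessary} (necessity) combined with Theorem~\ref{shrinkregion} (sufficiency, which also gives the ``with some noise'' conclusion). Your extra care about the indexing of induced coset leaders is a reasonable bookkeeping remark but introduces nothing beyond what the paper intends.
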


One can prove directly or appeal to the last corollary to check
that very short subgroup sequences always decode correctly
with minimal coset leaders:
\begin{cor}\label{shortseq}
Assume the initial vector has full orbit under $\G$.
Consider a short subgroup sequence $\{I\}<\G_1<\G$.
Then the subgroup decoding algorithm decodes correctly
(with some noise) if and only if the coset leaders for $\G$ over $\G_1$ are minimal.
\end{cor}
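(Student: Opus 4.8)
The plan is to derive this corollary from Corollary~\ref{correctdecoding}, which already supplies necessary and sufficient conditions for correct subgroup decoding in terms of minimality of the induced coset leaders $\op{CL}(\G/\G_k)$ for all $k$ with $1\le k<m$. For the short sequence $\{I\}<\G_1<\G$ we have $m=2$, so the only index in range is $k=1$, and $\op{CL}(\G/\G_1)$ is literally the set of coset leaders $\op{CL}(\G_2/\G_1)$ for $\G$ over $\G_1$ (there is no composition of consecutive coset-leader sets to perform, since $\ell=m=2$ and $k=1$ are consecutive). Hence the hypothesis ``induced coset leaders $\op{CL}(\G/\G_k)$ are minimal for all $k$'' collapses to the single condition ``the coset leaders for $\G$ over $\G_1$ are minimal,'' and Corollary~\ref{correctdecoding} gives the biconditional directly, together with the assertion that decoding is then correct with some noise.

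First I would spell out that $\G_0=\{I\}$ and $\G_2=\G$, so the subgroup sequence has length two and the set of relevant indices $k$ with $1\le k<m=2$ is just $\{1\}$. Then I would observe that the induced coset leaders $\op{CL}(\G_\ell/\G_k)$ from the lemma preceding Theorem~\ref{minimalnecessary}, taken with $\ell=2$ and $k=1$, reduce to $\op{CL}(\G_2/\G_1)=\op{CL}(\G/\G_1)$, the chosen coset leaders for $\G$ over $\G_1$. Finally I would invoke Corollary~\ref{correctdecoding}: correct subgroup decoding occurs if and only if these are minimal, and in that case decoding is correct with some noise.

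Alternatively, one can give the ``direct'' proof the corollary alludes to, without routing through Corollary~\ref{correctdecoding}: necessity is the $m=2$ instance of Theorem~\ref{minimalnecessary}, and sufficiency is the $m=2$ instance of Theorem~\ref{shrinkregion} (whose hypothesis ``every set of induced coset leaders for $\G$ over $\G_k$ is minimal for $1\le k<m$'' is again just minimality of $\op{CL}(\G/\G_1)$). I would mention this so the reader sees the short sequence is precisely where the two theorems' hypotheses coincide on a single set.

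There is essentially no obstacle here; the only thing to be careful about is the bookkeeping that, for $m=2$, the word ``induced'' adds nothing — $\op{CL}(\G/\G_1)$ is already a genuine coset-leader set and not a nontrivial product of several such sets — so that the general hypotheses of Corollary~\ref{correctdecoding} (or of Theorems~\ref{minimalnecessary} and~\ref{shrinkregion}) degenerate exactly to the stated condition. I would keep the proof to two or three sentences.
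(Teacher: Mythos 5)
Your proposal is correct and matches the paper's own (implicit) argument: the authors simply remark that the corollary follows by appealing to Corollary~\ref{correctdecoding}, which is exactly your specialization to $m=2$, where the only non-vacuous induced coset-leader set is $\op{CL}(\G/\G_1)$ itself. Your bookkeeping that ``induced'' degenerates to the single chosen set, and your note that the direct route via Theorems~\ref{minimalnecessary} and~\ref{shrinkregion} coincides here, are both accurate.
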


For example, this corollary applies to the octahedral reflection
group $\G_8$
of Section~\ref{G_8} with the natural subgroup sequence
$\{ I \} < \{ I,A,A^2,A^3 \} < \G_8$.  With an appropriate choice of
the initial vector, it is straightforward to find minimal coset leaders for $\G_8$.
Compare with Section~\ref{ties}, though, for difficulties
inherent in finding minimal coset leaders in general.
 
\section{Comparing Minimal, Region Minimal, and Greed Compatible}
\label{comparingnotions}
In this section, we make a few observations comparing
the different geometric notions of minimal coset representatives.
We begin by comparing region minimal with minimal:
\begin{thm} \label{analyziz}
Fix any
finite unitary group $\G$ acting on $\V$,
initial vector $\x_0$ in $\V$ of full orbit,
subgroup sequence, and
coset leader sets $\op{CL}(\G_k/\G_{k-1})$.
Then the following (where $1\leq k,j\leq m$)
hold for induced coset leaders.
\begin{enumerate}
\item If\/ $\op{CL}(\G_k/\G_{k-1})$ is region minimal
then it is minimal. 
\item If\/ $\op{CL}(\G/\G_j)$ is region minimal then it is minimal.
\item If\/ $\op{CL}(\G_k/\G_{k-1})$ is region minimal for all $k$,\\
then $\op{CL}(\G/\G_j)$ is region minimal for all $j$.
\item If\/ $\op{CL}(\G_k/\G_{k-1})$ is minimal for all $k$,\\
then $\op{CL}(\G/\G_j)$ need not be minimal
  for all $j$.
\end{enumerate}
\end{thm}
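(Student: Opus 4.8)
The plan is to prove the four parts in order, since (1) is the base case that feeds (2) and (3), and (4) is a separate counterexample.

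For part (1), I would argue directly from the definitions. Suppose $c \in \op{CL}(\G_k/\G_{k-1})$ is region minimal, so $\op{FR}(\G_k) \subseteq c(\op{FR}(\G_{k-1}))$. Since $\x_0 \in \op{FR}(\G_k)$ (the initial vector is closer to itself than to any $h\x_0$ with $h\neq I$, using full orbit), we get $\x_0 \in c(\op{FR}(\G_{k-1}))$, which is exactly the definition of $c$ being minimal. For part (2), the same one-line argument works verbatim with $\G_k$ replaced by $\G$ and $\G_{k-1}$ replaced by $\G_j$, using $\x_0 \in \op{FR}(\G)$ and the fact (from the induced coset leader lemma) that $\op{CL}(\G/\G_j)$ really is a set of coset leaders for $\G$ over $\G_j$; region minimality of a leader $c=c_m\cdots c_{j+1}$ says $\op{FR}(\G)\subseteq c(\op{FR}(\G_j))$, and intersecting with $\{\x_0\}$ gives minimality.

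For part (3), I would chain the region-minimality hypotheses telescopically. If each $\op{CL}(\G_i/\G_{i-1})$ is region minimal, then for $i>j$ we have $\op{FR}(\G_i)\subseteq c_i(\op{FR}(\G_{i-1}))$, so applying $c_i^{-1}$ gives $c_i^{-1}(\op{FR}(\G_i))\subseteq \op{FR}(\G_{i-1})$; equivalently $\op{FR}(\G_i)\subseteq c_i(\op{FR}(\G_{i-1}))$. Nesting of fundamental regions ($\op{FR}(\G_{i-1})\supseteq\op{FR}(\G_i)$, stated in Section~\ref{geometricnotions}) lets me iterate: starting from $\op{FR}(\G)\subseteq c_m(\op{FR}(\G_{m-1}))$ and feeding in $\op{FR}(\G_{m-1})\subseteq c_{m-1}(\op{FR}(\G_{m-2}))$, I get $\op{FR}(\G)\subseteq c_m c_{m-1}(\op{FR}(\G_{m-2}))$, and so on down to $\op{FR}(\G)\subseteq c_m\cdots c_{j+1}(\op{FR}(\G_j))$. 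That is precisely region minimality of the induced leader $c_m\cdots c_{j+1}$, so $\op{CL}(\G/\G_j)$ is region minimal for every $j$. The only subtlety is making sure the containments compose in the right direction, which is why the nesting $\op{FR}(\G_{i-1})\supseteq\op{FR}(\G_i)$ is needed — apply a set-valued map to an inclusion preserves it, so this is routine once the direction is pinned down.

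Part (4) is the real obstacle: I need an explicit example — a finite unitary group $\G$, an initial vector of full orbit, a subgroup sequence, and coset leaders — in which every consecutive $\op{CL}(\G_k/\G_{k-1})$ is minimal but some induced $\op{CL}(\G/\G_j)$ fails to be minimal. The plan is to use a small reflection group where the geometry can be drawn, most naturally one of the groups analyzed later in the paper (e.g., a dihedral group $W\!B_2$ or one of the $\G(r,1,n)$ with small parameters, or the octahedral-type example of Section~\ref{G_8}), with a three-term sequence $\{I\}<\G_1<\G_2=\G$. The idea is that minimality of $\op{CL}(\G_2/\G_1)$ only controls the coset leader's effect on $\x_0$ relative to the \emph{small} subgroup $\G_1$, while minimality of the induced leaders $\op{CL}(\G/\{I\})=\op{CL}(\G_2/\G_1)\cdot\op{CL}(\G_1/\{I\})$ as elements of the full group requires that the product $c_2 c_1$ move $\x_0$ less than any of its $\G_1$-translates $c_2 c_1 h$ — and a poorly chosen pairing of an individually-minimal $c_2$ with an individually-minimal $c_1$ can violate this. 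I expect to exhibit concrete matrices and a concrete $\x_0$, compute the relevant distances $\|(c_2c_1)^{-1}\x_0-\x_0\|$ versus $\|(c_2c_1 h)^{-1}\x_0-\x_0\|$ for the one or two nonidentity $h\in\G_1$, and show the inequality reverses. Finding the example with the cleanest arithmetic is the bulk of the work; the verification itself is a short distance computation.
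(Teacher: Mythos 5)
Your arguments for (1)--(3) are correct and in fact more self-contained than the paper's: the paper simply remarks that (1)--(3) are ``straightforward using Theorem~\ref{propAnne}'' (the equivalence of region minimal and greed compatible), whereas you work directly from the definitions. Your one-line proof of (1) and (2) --- $\x_0$ always lies in $\op{FR}(\K)$, so $\op{FR}(\K)\subseteq c(\op{FR}(\H))$ immediately gives $\x_0\in c(\op{FR}(\H))$ --- is valid (and does not even use the full-orbit hypothesis), and your telescoping chain for (3) is the right computation. Note only that the nesting $\op{FR}(\G_{i-1})\supseteq\op{FR}(\G_i)$ you invoke is not actually needed: applying the set map $c_m\cdots c_{i+1}$ to the inclusion $\op{FR}(\G_i)\subseteq c_i(\op{FR}(\G_{i-1}))$ already lets the containments compose in the right direction.

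Part (4) is where your proposal has a genuine gap: the claim is existential, and you have only described a strategy for finding a counterexample, not produced one. Moreover, your proposed hunting grounds are probably the wrong ones. The paper shows (Theorem~\ref{wreathgreedy} and Section~\ref{gr1n}) that the natural choices for $\G(r,1,n)$ --- which include $W\! B_n$ and the small dihedral-type cases --- yield greed compatible, hence region minimal, coset leaders, so stepwise minimality in those settings tends to propagate rather than fail. The paper's own justification for (4) is a citation: an explicit configuration for the exceptional complex reflection group $\G_{25}$ constructed in Kim~\cite{HJK}, in which each consecutive $\op{CL}(\G_k/\G_{k-1})$ is minimal but some induced $\op{CL}(\G/\G_j)$ is not (this is the same example alluded to in Section~\ref{effective}). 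Until you either reproduce that example or verify a concrete small one of your own by the distance computation you describe, claim (4) remains unproved in your write-up.
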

The proofs of (1)--(3) in the last theorem
are straightforward using Theorem~\ref{propAnne}. 
The claim in (4) is shown with an example
based on the complex reflection group $\G_{25}$ given in Kim~\cite{HJK}.

Theorem~\ref{propAnne} 
then implies
\begin{cor}\label{howtochoose}
Assume the initial vector $\x_0$ has full orbit under $\G$.
Any \robust\ set of coset leaders is also minimal.
\end{cor}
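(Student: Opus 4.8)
The statement is Corollary~\ref{howtochoose}, which follows directly from two earlier results. The plan is to chain together Theorem~\ref{propAnne} and part~(1) of Theorem~\ref{analyziz}. First I would invoke the hypothesis that $\x_0$ has full orbit, which is exactly the standing assumption needed to apply Theorem~\ref{propAnne}: that theorem tells us that under the full-orbit hypothesis, a set of coset leaders is greed compatible if and only if it is region minimal. So if we start with an arbitrary greed compatible set $\op{CL}(\G_k/\G_{k-1})$, we may immediately conclude it is region minimal.

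Second, I would apply part~(1) of Theorem~\ref{analyziz}, which states that (again under the full-orbit hypothesis) region minimal coset leaders are minimal. Composing the two implications yields that the greed compatible set is minimal, which is precisely the claim. The only thing to be careful about is the quantifier structure: the corollary asserts that \emph{any} greed compatible set is minimal, so I want to make sure the argument is applied to a single consecutive pair $\H \leq \K$ in the subgroup sequence at a time rather than requiring all of them simultaneously. Both Theorem~\ref{propAnne} and Theorem~\ref{analyziz}(1) are stated for a single consecutive pair, so this is fine and no additional hypothesis on the other coset-leader sets is needed.

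I do not expect any genuine obstacle here — this is a two-line deduction from results already proved. The one conceptual point worth a sentence in the writeup is to remind the reader why greed compatibility, which is a ``forward-looking'' condition (every vector of $\op{FR}(\H)$ is carried into $\op{FR}(\K)$ by \emph{some} leader), forces each individual leader to be minimal (the ``backward'' condition $\x_0 \in c(\op{FR}(\H))$): it is the round-trip argument buried in the proof of Theorem~\ref{propAnne}, where one uses full orbit plus the fact that two coset leaders agreeing on a single orbit point must coincide. But since that work is already done, the proof of the corollary itself is just: by Theorem~\ref{propAnne} greed compatible implies region minimal, and by Theorem~\ref{analyziz}(1) region minimal implies minimal, so we are done.

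\begin{proof}
Let $\op{CL}$ be a greed compatible set of coset leaders for a consecutive
pair $\H \leq \K$ in the subgroup sequence.  Since $\x_0$ has full orbit
under $\G$, it has full orbit under every subgroup, and Theorem~\ref{propAnne}
applies: $\op{CL}$ is greed compatible if and only if it is region minimal.
Hence $\op{CL}$ is region minimal.  By Theorem~\ref{analyziz}(1), a region
minimal set of coset leaders is minimal.  Therefore $\op{CL}$ is minimal.
\end{proof}
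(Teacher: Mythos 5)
Your proof is correct and follows exactly the route the paper intends: the corollary appears immediately after Theorem~\ref{analyziz} with the remark ``Theorem~\ref{propAnne} then implies,'' i.e., greed compatible $\Rightarrow$ region minimal by Theorem~\ref{propAnne} and region minimal $\Rightarrow$ minimal by Theorem~\ref{analyziz}(1). Your added care about the quantifier structure (applying the results one consecutive pair at a time) and the observation that full orbit under $\G$ gives full orbit under every subgroup are both sound and harmless refinements.
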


Recall that minimal coset leaders guarantee correct
decoding so long as noise remains under some
threshold (see Corollary~\ref{correctdecoding});
we give that thresh-hold explicitly
when coset leaders are \robust\ and sharpen the corollary
above:

\begin{thm}\label{rmanddmin}
Fix any finite unitary group $\G$ acting on $\V$,
initial vector $\x_0$ in $\V$ with full orbit, 
subgroup sequence, 
and coset leader sets $\op{CL}(\G_k/\G_{k-1})$.
If every set $\op{CL}(\G_k/\G_{k-1})$ is \robust, 
then the subgroup decoding algorithm decodes 
any received vector $\r$ satisfying
$\Vert \r - g^{-1}\x_0 \Vert < \frac 12 d_{min}$   
to the message $g$ in $\G$.
The corresponding statement is false if we replace 
$\frac 12 d_{min}$ by any $\gamma > \frac 12 d_{min}$.
\end{thm}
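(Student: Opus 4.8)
The plan is to prove the two halves of Theorem~\ref{rmanddmin} separately: first the positive statement that $\|\r - g^{-1}\x_0\| < \tfrac12 d_{min}$ forces correct decoding, then the sharpness claim that no larger radius works in general.

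For the positive direction, I would reduce to the case $g = I$ by applying $g$ to everything: replacing $\r$ by $g\r$ and noting that the subgroup decoding algorithm is equivariant in the appropriate sense, it suffices to show that any $\r$ with $\|\r - \x_0\| < \tfrac12 d_{min}$ decodes to $I$. Since $\op{CL}(\G_k/\G_{k-1})$ is \robust\ for each $k$ and $\x_0$ has full orbit, Theorem~\ref{propAnne} says each set is region minimal, and then part (3) of Theorem~\ref{analyziz} gives that every induced set $\op{CL}(\G/\G_j)$ is region minimal, hence \robust\ again by Theorem~\ref{propAnne}. By Theorem~\ref{algorithmworks} the algorithm decodes robustly, so it decodes $\r$ to $I$ as soon as $\r \in \op{DR}(I) = \op{FR}(\G)$. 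But the remark following the minimum-distance definition in Section~\ref{geometricnotions} already records that $\|\r - \x_0\| < \tfrac12 d_{min}$ implies $\r \in \op{FR}(\G)$; I would just reprove that one-line fact via the triangle inequality (if $\r$ were at least as close to some $a\x_0$ with $a \notin \S$, then $\|\x_0 - a\x_0\| \le \|\x_0 - \r\| + \|\r - a\x_0\| < d_{min}$, contradicting the definition of $d_{min}$). That completes the first half.

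For the sharpness claim, the idea is to exhibit, for any $\gamma > \tfrac12 d_{min}$, a received vector $\r$ at distance less than $\gamma$ from some codeword $g^{-1}\x_0$ that nonetheless does \emph{not} decode to $g$. The natural choice is the midpoint (or a point slightly past the midpoint) of the segment between $\x_0$ and a nearest neighboring codeword $a\x_0$, i.e. pick $a \notin \S$ realizing $\|a\x_0 - \x_0\| = d_{min}$ and set $\r$ to be a point on that segment at distance just over $\tfrac12 d_{min}$ from $\x_0$ — by choosing it at distance strictly between $\tfrac12 d_{min}$ and $\min(\gamma, d_{min})$ from $\x_0$, it is strictly closer to $a\x_0$ than to $\x_0$, hence (transmitting the codeword $\x_0 = I^{-1}\x_0$, so $g = I$) it lies within $\gamma$ of the transmitted codeword but is not in $\op{DR}(I)$; any correctly-decoding-to-nearest algorithm must then output something in $\S a \ne \S$, not $I$. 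A small care point: one must confirm the subgroup decoding algorithm actually outputs such a non-equivalent element rather than merely "failing" — but since we have already established it decodes robustly under the hypotheses, and $\r$ is strictly closer to $a\x_0$, robustness forces the output to be equivalent to $a$, which is not equivalent to $I$. So the decoded message genuinely differs from the sent one.

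The main obstacle I anticipate is the sharpness half, specifically making sure the counterexample is not vacuous: one needs $d_{min}$ to be attained and the segment-midpoint construction to land strictly outside every decoding region, which requires a little care when $\gamma$ is only infinitesimally larger than $\tfrac12 d_{min}$ (choose the interpolation parameter so the point's distance to $\x_0$ lies in the open interval $(\tfrac12 d_{min}, \gamma)$, which is nonempty precisely because $\gamma > \tfrac12 d_{min}$, and also bounded above by $d_{min}$ so it stays closer to $a\x_0$). The positive direction is essentially a bookkeeping assembly of Theorems~\ref{propAnne}, \ref{algorithmworks}, and~\ref{analyziz}(3) together with the one-line triangle-inequality observation, so I expect it to be short.
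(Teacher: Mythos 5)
Your argument is correct and follows essentially the same route as the paper: the triangle inequality places $\r$ in $\op{DR}(g)$ when $\Vert \r - g^{-1}\x_0 \Vert < \frac12 d_{min}$, robust decoding via Theorems~\ref{propAnne} and~\ref{algorithmworks} finishes the positive half, and the sharpness half uses the same point just past the midpoint of the segment toward a nearest neighboring codeword. Two small remarks: the detour through Theorem~\ref{analyziz}(3) is unnecessary, since Theorem~\ref{algorithmworks} assumes exactly the hypothesis that each consecutive set $\op{CL}(\G_k/\G_{k-1})$ is greed compatible; and to invoke robustness on your counterexample you should also check (again by the triangle inequality, using $\Vert \r - a\x_0 \Vert < \frac12 d_{min}$ and the fact that every other codeword lies at distance at least $d_{min}$ from $a\x_0$) that $\r$ is strictly closer to $a\x_0$ than to \emph{every} other codeword, not merely closer than to $\x_0$, so that $\r$ genuinely lies in the decoding region of $a^{-1}$.
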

\begin{proof}
If $\Vert \r - \x_0 \Vert < \frac 12 d_{min}$, then $\x \in \op{FR}(\G)$.
Hence by Theorems~\ref{algorithmworks} and \ref{propAnne}, 
the vector $\r$ will decode to the message $g$.
On the other hand, there exists $a \in \G$ such that 
$\| a^{-1}\x_0 - \x_0 \| = d_{min}$.  For any $\varepsilon$ with
$0 < \varepsilon < \frac 12$, let 
$\r = \x_0 +  (\frac 12 + \varepsilon)(a^{-1}\x_0 - \x_0)$.
Then $\| \r - I \x_0 \| = (\frac 12 + \varepsilon)d_{min}$,
but $\r$ decodes to $a$ since 
$\| \r - a^{-1}\x_0 \| = (\frac 12 - \varepsilon) d_{min}$.
\end{proof}


\section{Ties}\label{ties}

Correct decoding requires minimal induced coset leaders by
Corollary~\ref{correctdecoding}, but they may not exist because of ties.
For any subgroup $\H$ of $\G$,
we say that a \emph{tie} occurs when the vectors 
encoding two or more elements
from the same coset of $\H$ yield the same minimum distance to the 
initial vector, i.e., when $a\H = b\H$ for some $a$ and $b$ in $\G$
with 
$\| a^{-1}\x_0 - \x_0 \| = \| b^{-1}\x_0 - \x_0 \|$
minimizing $\| c^{-1}\x_0-\x_0 \|$
over all $c$ in the coset $a\H$.
There are a couple of ways ties occur naturally.

The first is when $a$ and $a^{-1}$ lie in the same coset of $\H$ 
(i.e., $a^2 \in \H$)
and $\| a^{-1}\x_0 - \x_0 \| = \| a\x_0 - \x_0 \|$ yields a minimum.
One has little choice but to change the subgroup sequence in this case,
as shown below for the concrete code based
on the complex reflection group $\G_4$.

The second way occurs when
the initial vector $\x_0$ is real and $a$ and $b$ are both symmetric
unitary matrices, so that each has inverse equal to its conjugate,
with $ab$ and $a^{-1}b^{-1}$ in the same coset.
(Such matrices arise in the natural
reflection representations of some complex reflection groups
where real initial vectors are often
a convenient choice.)  Then
$\| ab\x_0 - \x_0 \| = \| a^{-1}b^{-1}\x_0 - \x_0 \|$, 
and this distance could be minimal over the coset.
In this case, replacing the initial vector by one that is
properly complex will eliminate the tie.
Again, see the example of $\G_4$ below.

Another way to resolve the problem of ties
is to allow multiple coset leaders
and multiple canonical forms.  This worked for the real reflection groups
$W\!D_n$ in~\cite{FNP}
but generally seems to become cumbersome rather quickly.

\subsection{Tetrahedral group $\G_4$}
\label{G_4}  

We give an example of a complex reflection group and choice
of initial vector and subgroup sequence for which minimal
coset leaders do not exist and thus the subgroup decoding
algorithm does not decode correctly. We then show how to make
other choices to recover correct decoding.
The group $\G_4$ of order $24$ (with $8$ reflections)
is generated by the matrices
\[
A = \begin{bmatrix}\ 1 & 0 \\ 
\rule{0ex}{4ex}
\  0 & -\frac 12 + \frac{\sqrt{3}}2 i 
\hphantom{x}\end{bmatrix}
\qquad\text{and}\qquad
B = \begin{bmatrix} 
\frac 1{\sqrt 3} i &  \frac 1{\sqrt 2} - \frac 1{\sqrt 6} i \\ 
\rule{0ex}{4ex}
\ \frac 1{\sqrt 2} - \frac 1{\sqrt 6} i & 
\ \frac 12 + \frac 1{2\sqrt 3} i 
\hphantom{x}\end{bmatrix}
\]
which satisfy $A^3=B^3=I$ and $ABA=BAB$.
As explained in Walker~\cite{CW}, an optimum choice of the initial vector
(for other decoding methods) is approximately $\x_0 = (0.8881, 0.4597)$.

Suppose we take the natural subgroup sequence
$\{ I \} < \{ I,A,A^2 \} < \G_4$.  
Set $C = BA^2B$ and $D=CA$, so that $C$ and $D$ are inverse
but in the same coset 
$\{C,D,CA^2\}$ of $\G_1$, with
$$
||C\x_0-\x_0||
=
||D\x_0-\x_0||
<
||CA^2\x_0-\x_0||.
$$
Thus no minimal coset leader exists for this coset
because of a tie.
Note that $C^2=A^2\in \H$.

We could use instead the
subgroup sequence
$\{ I \} < \K < \G_4$ where $\K = \{ I,C,C^2,C^3,C^4,C^5 \}$.  
Then $\K$ has index four and 
minimal coset leaders for $\K$ are $I$, $B$, $B^2$. A tie
prevents choosing 
$AB$ or $A^2B^2=A^{-1}B^{-1}$ as a minimal
coset leader for the last coset.
(Here, $A$ and $B$ are symmetric unitary matrices.)
We resolve the tie by choosing a different 
initial vector.  Again consulting Walker~\cite{CW}, we choose
$\y_0 = (\frac 1{\sqrt{2}}+\frac i2, \frac 12)$
and $A^2B^2$ becomes the minimal coset leader.

Corollary~\ref{shortseq} implies
that the subgroup decoding algorithm decodes correctly with some noise  
for $\G_4$ 
with these revised choices.

\section{Efficient decoding using coset leader graphs} \label{type3}

We have thus far discussed mathematical properties that 
correct for noise.  Before considering control of errors in
the next section,
we turn our attention to matters of efficiency.
Throughout this section, we will assume that the initial
vector $\x_0$ has full orbit
under $\G$.
Given a fixed choice of coset leaders, the subgroup decoding algorithm
decodes by determining a coset leader 
at each step in the algorithm
that minimizes some distance.  Efficiency dictates that that we not loop
through {\em all} coset leaders in some fixed set $\op{CL}(\G_k/\G_{k-1})$ 
at each step.
One may instead use a restriction of
the standard Cayley graph 
to determine an appropriate choice.
(See Kriloff and Lay~\cite{KriloffLay} for an analysis
of Cayley graphs for $\G(r,1,n)$.) 
\begin{definition}
Given a group $\G$ with subgroups $\H \leq \K$
and a set $X$ of generators 
for $\K$, the \emph{coset leader graph} $\Gamma=\Gamma(\K/ \H)$ 
for $\K$ over $\H$ 
with respect to a fixed set $\op{CL}(\K/\H)$ of coset leaders is
the graph \begin{itemize}
\item
whose vertices are the elements of $\op{CL}(\K/\H)$
\item
with a directed edge (labeled by $a$) from vertex $c$ to $d$ whenever $c = ad$ for 
some generator $a$ in $X$.
\end{itemize}
\end{definition}

Given a unitary group, a subgroup sequence, and an initial vector,
Theorem~\ref{minimalnecessary} tells us that the coset leaders
should be chosen minimal.  
If we also specify a generating set $X_k$ for each subgroup $\G_k$,
then the coset leader graphs are determined.

\begin{definition} \label{propCL}
A set $\op{CL}(\K/\H)$ of coset leaders for $\K$ over $\H$ is
{\em connected} with respect to a fixed generating set $X$ of\/ $\K$ 
if its coset leader graph $\Gamma(\K/\H)$ is connected.
\end{definition}
Thus a set of coset leaders $\op{CL}$ is connected with respect to $X$ if
$c\in \op{CL}$ implies existence of a generator 
$a \in X$ and coset leader $d \in \op{CL}$ 
such that $c=ad$ or $c=a^{-1}d$.

For the groups considered in this paper, 
the coset leader graphs will be 
not only connected, but will be trees and cycles.  
Therefore we can safely ignore
some of the complications that
arise when navigating the more 
complex coset leader graphs associated with
exceptional reflection groups (see~\cite{FNP}).

An effective subgroup coding scheme may use
information in the coset 
leader graph to find the representation
of a group element as a product of coset leaders, $g = c_m \cdots c_1$; see~\cite{FNP}.  
A factorization of a coset leader $c$ for $\K$ over $\H$ into generators of $\K$ can 
be reconstructed by tracing a path from $I$ to $c$ in the coset leader graph and reading the 
edge labels in order.  Such a path need not be unique. 
However, for any finite group and connected coset leader graph, 
one may identify 
a canonical path by ordering the generators,
and this determines a spanning tree $T$ in $\Gamma$.

We add efficiency to the subgroup decoding algorithm
by specifying directions for navigating 
through the coset leader graphs $\Gamma_k$ for $\G_k$ over $\G_{k-1}$
to determine a coset leader at each stage of the algorithm.
We move
through the coset leader tree downward from
its root at $I$, each step getting closer to the initial vector, ceasing 
when steps take us further away again.  It is desirable in a group 
code that $\Vert g\x_0 - \x_0 \Vert$ be roughly proportional to the (minimum) 
length of $g$ as a word in generators and their inverses (for each $g$ in $\G$).
However, there may be ways to improve
the process.  For example, one can use a shortcut for rotations,
where the coset leader graph is a cycle.

\section{Error control}\label{errorcontrol}
We now consider error control.  Decoding errors 
occur with substantial noise:
A codeword $g^{-1}\x_0$
may be sent (for some message $g$ in $\G$) but the received vector $\r$
may land closer to some other
codeword.
If $\r$ does not lie in the decoding region $\op{DR}(g)$, 
then it most likely
lies in a geometrically neighboring decoding region 
$\op{DR}(g')$.
Can we fine-tune the subgroup decoding algorithm so
the decoded message $g'$ is close to the sent message $g$ most of the time?
In this section, we give properties of a group code that
ensure that 
the decoded message will differ from the sent message in at most 
one factor when written as a product of coset leaders,
provided the received vector lands in a region neighboring 
the intended one.
We again assume the initial vector $\x_0$ has full orbit
throughout this section.

\begin{definition}
The \emph{nearest neighbors} of a codeword $\u$ are
the codewords $\v$ with $\Vert \u - \v \Vert = d_{min}$.
\end{definition}
It is not difficult to see how nearest neighbors of the initial
vector determine nearest neighbors of any codeword:
\begin{lm} \label{nnlemma}
For all $g$ in $\G$,
the nearest neighbors of $g \x_0$ are the codewords
$g \w$ with $\w$ a nearest neighbor of\/ $\x_0$.  
\end{lm}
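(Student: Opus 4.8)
The plan is to exploit the fact that $\G$ acts by isometries, so that the distance between two codewords is invariant under the group action, and then reduce the statement about nearest neighbors of an arbitrary codeword $g\x_0$ to the special case of nearest neighbors of $\x_0$ itself. Concretely, I would first observe that for any $g$ in $\G$ and any $a,b$ in $\G$, we have $\Vert ga\x_0 - gb\x_0 \Vert = \Vert a\x_0 - b\x_0 \Vert$ since $g$ is unitary (an isometry); in particular the set of pairwise distances among codewords is the same when every codeword is translated by $g$. This immediately shows that multiplication by $g$ is a distance-preserving bijection of the code $\G\x_0$ onto itself.

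Next I would use this to compute $d_{min}$ is unchanged: the minimum distance of the code is $\min_{a \notin \S b}\Vert a\x_0 - b\x_0\Vert$, and applying the translation by $g$ permutes the relevant pairs, so the minimum is the same. (Alternatively, one simply recalls from the definition in Section~\ref{geometricnotions} that $d_{min} = \min_{a\notin\S}\Vert a\x_0 - \x_0\Vert$ is a fixed number attached to the code.) Then the core computation: a codeword $\v$ in $\G\x_0$ is a nearest neighbor of $g\x_0$ exactly when $\Vert g\x_0 - \v \Vert = d_{min}$. Writing $\v = g\w$ for the unique $\w$ in $\G\x_0$ with $g^{-1}\v = \w$ (here I use that left multiplication by $g$ is a bijection of the code onto itself, as established above), this equation becomes $\Vert g\x_0 - g\w \Vert = d_{min}$, which by isometry is $\Vert \x_0 - \w \Vert = d_{min}$, i.e., $\w$ is a nearest neighbor of $\x_0$. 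Thus $\v \mapsto g^{-1}\v$ is a bijection from the nearest neighbors of $g\x_0$ to the nearest neighbors of $\x_0$, and its inverse $\w \mapsto g\w$ gives exactly the claimed description.

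One small point to handle carefully is the phrase ``the codewords $g\w$ with $\w$ a nearest neighbor of $\x_0$'': if $\x_0$ does not have full orbit this could in principle be ambiguous, but since we are in Section~\ref{errorcontrol} where $\x_0$ is assumed to have full orbit, each codeword is $a\x_0$ for a unique $a$ in $\G$, so ``$\w$ a nearest neighbor of $\x_0$'' refers unambiguously to a codeword, and $g\w$ is again a well-defined codeword. I do not anticipate a genuine obstacle here — the only thing to be slightly careful about is being explicit that left multiplication by $g$ is a bijection of the code onto itself before invoking it to match up nearest-neighbor sets, and that the isometry property is what converts $\Vert g\x_0 - g\w\Vert$ into $\Vert\x_0 - \w\Vert$. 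The argument is short and self-contained; the ``hard part'' is really just bookkeeping to make sure the bijection is stated on codewords rather than on group elements.
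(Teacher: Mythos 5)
Your argument is correct and is exactly the one the paper intends: the paper states this lemma without proof (remarking only that ``it is not difficult to see''), and the isometry computation $\Vert g\x_0 - g\w\Vert = \Vert \x_0 - \w\Vert$ together with the observation that left multiplication by $g$ permutes the code is the evident justification. No gaps.
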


It is useful to identify the group elements
yielding nearest neighbors.
\begin{definition}\label{nnsetdef}
The {\em neighborhood} $N_\G(\x_0)$ of\/ $\x_0$ is the set 
of nearest neighbors of\/ $\x_0$, i.e., the 
points in the orbit of\/ $\x_0$ closest to $\x_0$.
We say the corresponding set $N_\G$ of group elements {\em realizes} the neighborhood:  
\begin{align*}
N_{\G}(\x_0) &= \{ \v \in \G\x_0 - \{ \x_0 \} : \Vert \v - \x_0 \Vert = d_{min} \},\\
N_{\G} &= \{a \in \G : a\x_0 \in N_{\G}(\x_0)\}. 
\end{align*}
\end{definition}
Neighborhoods can be analogously defined for
any subgroup $\G_k$ in the subgroup sequence.
In the case that $\G$ is a Coxeter group, 
a set of simple reflections realizes the neighborhood of
$\x_0$.
More generally, the generators for each subgroup may be taken to
be a subset of simple reflections
so that the group elements realizing
the neighborhood for $\G_k$ generate $\G_k$ (see~\cite{FNP}).
We seek a similar property for general group codes below.

By Lemma~\ref{nnlemma}, if a codeword $g^{-1}\x_0$ is decoded incorrectly,
it will most likely be decoded as a neighbor $(bg)^{-1}\x_0$
with $b$ in $N_\G$.  To minimize the message error, we would like
the canonical form of $bg$ as a product of coset leaders
to differ as little as possible from that of $g$.
That is the effect 
of the next two error control properties for consecutive 
subgroups in the subgroup sequence, both from~\cite{FNP}.
Note that the first property depends on the choice
of the initial vector $\x_0$.
For any subset $X$ of $\G$, write $X^{-1}$
for the set $\{a^{-1}: a\in X\}$.

\begin{prop}[Nearest Neighbors] \label{naygen}
The {\em Nearest Neighbors Property} holds for
a fixed set $X_{\G}$ generating $\G$ whenever
$N_\G \subseteq X_{\G} \cup X^{-1}_{\G}$.
\end{prop}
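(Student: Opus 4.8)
Strictly speaking, the statement above is a definition: it names the condition under which a chosen generating set $X_\G$ of $\G$ is said to satisfy the Nearest Neighbors Property, so there is nothing to deduce in the usual sense. My proposal is therefore to record the two small facts that make the condition well-posed and, more importantly, to make explicit why it is exactly the right hypothesis for the error-control results that follow.

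First I would observe that the set $N_\G$ is symmetric, i.e.\ $N_\G = N_\G^{-1}$. Since $\G$ acts by isometries (and we may take $\G$ unitary), for any $a$ in $\G$ we have $\Vert a^{-1}\x_0 - \x_0 \Vert = \Vert \x_0 - a\x_0 \Vert = \Vert a\x_0 - \x_0 \Vert$, so $a\x_0$ realizes the minimum distance $d_{min}$ precisely when $a^{-1}\x_0$ does; moreover $a\x_0 \neq \x_0$ forces $a^{-1}\x_0 \neq \x_0$, so $a \in N_\G$ if and only if $a^{-1} \in N_\G$. This is why the condition is stated symmetrically with $X_\G \cup X_\G^{-1}$: asking that $N_\G \subseteq X_\G \cup X_\G^{-1}$ is the same as asking that, for each nearest-neighbor element $a$, at least one of $a$, $a^{-1}$ be a chosen generator — the weakest demand under which one can pass from any codeword to any of its nearest neighbors along a single labeled edge of the Cayley graph, and hence of the coset leader graphs of Section~\ref{type3}.

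The substance of the definition is in how it is used, so I would also spell that out. By Lemma~\ref{nnlemma} the nearest neighbors of a transmitted codeword $g^{-1}\x_0$ are exactly the codewords $(bg)^{-1}\x_0$ with $b$ in $N_\G$; so a nearest-neighbor decoding error replaces the message $g$ by $g' = bg$ with $b \in N_\G$, hence with $b$ or $b^{-1}$ a generator in $X_\G$. Thus, granted the Nearest Neighbors Property, a nearest-neighbor mis-decode perturbs the message by a single generator step. The remaining work — turning ``one generator step'' into ``one coset-leader factor changes'' in the canonical form $g = c_m \cdots c_1$ — is not part of this statement: it requires the companion error-control property linking the generating sets to consecutive subgroups, and the combined conclusion is Theorem~\ref{thmWes}. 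That is where the only real obstacle lies; the present statement carries no proof burden beyond the symmetry observation above.
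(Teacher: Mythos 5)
You are right that this ``Property'' environment is a named condition rather than a theorem, and the paper accordingly supplies no proof; your reading matches the paper's treatment exactly. Your added symmetry observation ($N_\G = N_\G^{-1}$ because $\Vert a^{-1}\x_0 - \x_0\Vert = \Vert \x_0 - a\x_0\Vert$ for an isometry $a$) is correct and harmless, and your account of how the condition feeds into Lemma~\ref{nnlemma}, Theorem~\ref{thmWes}, and Corollary~\ref{controlnoise} is consistent with the paper.
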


\begin{prop}[Error Control] \label{propWes}
Let $\H < \K$ be subgroups of $\G$ with a fixed set of
coset leaders $\op{CL}(\K/\H)$.
The {\em Error Control Property} holds
for sets of generators
$X_{\H}$ of\/ $\H$ and $X_{\K}$ of\/ $\K$ whenever
$$bc \in \op{CL}(\K/\H) \text{ or }
c^{-1}bc \in X_{\H} \cup X_{\H}^{-1}\ 
$$
for all $b \in X_{\K} \cup X_{\K}^{-1}$ and
$c \in \op{CL}(\K/\H)$.
\end{prop}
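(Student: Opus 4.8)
The statement isolates a purely group-theoretic condition on the single pair $\H < \K$, and what a proof must establish is that this condition does what its name promises at the level of that pair: left multiplication by one generator perturbs the canonical factorization of an element of $\K$ over $\H$ in at most one slot. The plan is to verify exactly this and nothing more, keeping the argument confined to the pair $\H<\K$. (This atomic step is what later seeds the recursion down the full subgroup sequence feeding Theorem~\ref{thmWes}, but I would not touch the chain here.)

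First I would pin down the relevant factorization. Since $\op{CL}(\K/\H)$ represents the left cosets, every $g \in \K$ lies in a unique coset $c\H$ and so factors uniquely as $g = ch$ with $c \in \op{CL}(\K/\H)$ and $h \in \H$. Left multiplication by a generator sends this to $bg = bch$ for $b \in X_{\K} \cup X_{\K}^{-1}$, and the whole question is how the pair $(c,h)$ moves.

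Next I would split into the two cases matching the two disjuncts of the hypothesis. If $bc \in \op{CL}(\K/\H)$, then $bg = (bc)h$ is already in canonical form, so the coset leader changes from $c$ to $bc$ while the $\H$-component $h$ is left untouched. If instead $c^{-1}bc \in X_{\H} \cup X_{\H}^{-1}$, then since $c^{-1}bc \in \H$ we have $bc = c(c^{-1}bc) \in c\H$, so $bc$ lies in the coset of $c$ and the leader is unchanged; rewriting $bg = c\,(c^{-1}bc)\,h$ exhibits $bg = ch'$ with $h' = (c^{-1}bc)h$, so now $c$ is fixed and the $\H$-component advances from $h$ to $h'$ by the single $\H$-generator $c^{-1}bc$. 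In either case exactly one of the two slots of $(c,h)$ moves, and it moves by one step; this is the controlled behaviour the property asserts.

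The point to get right, rather than a genuine obstacle, is that the two disjuncts are exhaustive for the quantity we care about: the coset $bc\H$ either has $bc$ as its leader or it does not, the first disjunct handles the former, and the real content of the hypothesis is that failure of the former \emph{forces} the obstruction $c^{-1}bc$ back into $\H$ as a single generator, which is what keeps the $\H$-component from being scrambled. I would check the case split for both $b$ and $b^{-1}$, which is precisely why the hypothesis quantifies over $X_{\K} \cup X_{\K}^{-1}$, and note that uniqueness of the left-coset factorization is what confines the change to one slot. The only subtlety is confirming in the second case that $bc$ lands in the coset $c\H$ itself and not merely in some coset with a conveniently related leader, and this is immediate from $c^{-1}bc \in \H$, so no separate argument is needed.
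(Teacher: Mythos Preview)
This statement is a \emph{definition}, not a theorem: the paper uses the environment \texttt{prop} for ``Property'', and Property~\ref{propWes} simply stipulates what it means for the Error Control Property to hold for the pair $\H<\K$, namely that the displayed disjunction is satisfied for all $b$ and $c$.  The paper gives no proof here because there is nothing to prove; the name ``Error Control'' is justified only later, by Theorem~\ref{thmWes} and Corollary~\ref{controlnoise}.

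What you have written is mathematically correct, but it is not a proof of Property~\ref{propWes}.  Rather, it is exactly the inductive step of the paper's proof of Theorem~\ref{thmWes}, restricted to a single pair $\H<\K$: the case split ``$bc$ is a coset leader'' versus ``$c^{-1}bc\in X_{\H}\cup X_{\H}^{-1}$'' and the resulting one-slot perturbation of the canonical form is precisely how the paper passes from $\G_m$ down to $\G_{m-1}$ in that proof.  So your argument belongs in (and essentially reproduces) the proof of Theorem~\ref{thmWes}, not here.
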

Note that the property implies
that either $bc$ is the coset leader for the coset
$bc\H$, or $c$ is the coset leader for $bc\H$ 
because $bc$ and $c$ lie in the same coset.

The Error Control Property minimizes small errors:

\vspace{1ex}

\begin{thm} \label{thmWes}
Assume that Error Control Property~\ref{propWes} holds 
for consecutive pairs
of a subgroup sequence
$
\{I\}=\G_0<\G_1<\cdots < \G_{m}=\G
$,
some choice of generators $X_{\G_k}$ of $\G_k$,
and some choice of coset leaders $\op{CL}(\G_k/\G_{k-1})$.
Suppose $g$ in $\G$ has
canonical form as a product of coset leaders given by
$$g=c_m \cdots c_1, \ \text{ each } c_k\in \op{CL}(\G_k/\G_{k-1}).$$ 
Then for any $b \in X_{\G} \cup X_{\G}^{-1}$,
the canonical form of $bg$ is 
$$bg=c_m' \cdots c_1',\ \text{ each } c_k'\in \op{CL}(\G_k/\G_{k-1}),$$ 
where $c_i'=c_i$ for all but one $i$.  In addition, for
that single index $j$ with $c_j'\neq c_j$, the coset leader $c_j'$ is 
adjacent to $c_j$ in the coset leader
graph for $\G_j$ over $\G_{j-1}$.
\end{thm}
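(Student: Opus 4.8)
The plan is to argue by induction on $m$, peeling off coset leaders one at a time from the right, tracking how left-multiplication by a single generator (or its inverse) propagates through the canonical form. The base case $m=1$ is immediate: $\G_1$ over $\G_0=\{I\}$ has every element as its own coset leader, so $bg=bc_1$ is already in canonical form, $bc_1$ and $c_1$ lie in the same (trivial) coset of $\G_0$, and the Error Control Property with $c=c_1$ forces $c^{-1}bc\in X_{\G_1}\cup X_{\G_1}^{-1}$ when $bc_1\notin\op{CL}$, but since $\op{CL}(\G_1/\G_0)=\G_1$ we simply have $c_1'=bc_1$, which differs from $c_1$ and is adjacent to it in $\Gamma(\G_1/\G_0)$ via the edge labeled $b$.

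For the inductive step I would write $g=c_m\cdots c_1$ and set $h=c_{m-1}\cdots c_1\in\G_{m-1}$, so $g=c_m h$. Apply Error Control Property~\ref{propWes} to the pair $\G_{m-1}<\G_m=\G$ with $b\in X_\G\cup X_\G^{-1}$ and $c=c_m$. Two cases arise. First, if $bc_m\in\op{CL}(\G/\G_{m-1})$, then $bg=(bc_m)(c_{m-1}\cdots c_1)$ is already the canonical form, $c_m'=bc_m\neq c_m$ (they lie in the same coset $bc_m\G_{m-1}$, and equality would force $b=I$, excluded), all lower factors are unchanged, and $c_m'$ is adjacent to $c_m$ in $\Gamma(\G/\G_{m-1})$ via the edge labeled $b$. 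Second, if instead $c_m^{-1}bc_m\in X_{\G_{m-1}}\cup X_{\G_{m-1}}^{-1}$, then $c_m$ is the coset leader of the coset $bc_m\G_{m-1}$ (since $bc_m$ and $c_m$ differ by the element $c_m^{-1}bc_m\in\G_{m-1}$), so $c_m'=c_m$, and we rewrite $bg=bc_m h=c_m\,(c_m^{-1}bc_m)\,h$. Now $b':=c_m^{-1}bc_m$ lies in $X_{\G_{m-1}}\cup X_{\G_{m-1}}^{-1}$, so the induction hypothesis applies to $b'$ acting on $h=c_{m-1}\cdots c_1\in\G_{m-1}$ with the subgroup sequence $\{I\}<\G_1<\cdots<\G_{m-1}$: the canonical form of $b'h$ is $c_{m-1}'\cdots c_1'$ with $c_i'=c_i$ for all but one index $j\le m-1$, and $c_j'$ adjacent to $c_j$ in $\Gamma(\G_j/\G_{j-1})$. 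Prepending $c_m'=c_m$ gives the canonical form of $bg$, completing the step.

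The main obstacle — really the only nonroutine point — is making sure that in the first case $c_m'\neq c_m$, i.e., that multiplication by a \emph{nontrivial} generator genuinely changes the top coset leader, and dually that in the recursive second case exactly one index changes rather than zero: the induction hypothesis as stated asserts $c_i'=c_i$ for all but one $i$, which I read as forcing a genuine change at some index (if $b'h=h$ were possible one would need $b'=I$, impossible for $b'\in X_{\G_{m-1}}\cup X_{\G_{m-1}}^{-1}$, which holds provided the generating sets are chosen to exclude $I$). So one small bookkeeping hypothesis — that each $X_{\G_k}$ omits the identity, which is harmless — underlies the ``exactly one'' claim; with that in hand the two cases are mutually exclusive and exhaustive, and the adjacency claim in each case is exactly the definition of an edge in the relevant coset leader graph.
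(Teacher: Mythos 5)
Your proof is correct and follows essentially the same route as the paper's: induction on $m$, using the Error Control Property to split into the case where $bc_m$ is itself a coset leader (finished immediately) and the case where $b'=c_m^{-1}bc_m$ lies in $X_{\G_{m-1}}\cup X_{\G_{m-1}}^{-1}$, so that $bg=c_m(b'c_{m-1}\cdots c_1)$ and the induction hypothesis applies. The paper's version is considerably terser and does not spell out the ``exactly one index changes'' bookkeeping or the harmless requirement that $I\notin X_{\G_k}$, both of which you handle correctly.
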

\begin{proof}
We proceed by induction on $m$.  If $m=1$, then every element is
a coset leader, and the conclusion is trivial. Let $m>1$.
Consider 
$g = c_m \cdots c_1$ and take $b \in X_{\G_m}=X_{\G}$.
If $bc_m$ is a coset leader, then $bg = (bc_m)c_{m-1} \cdots c_1$ is in canonical
form. 
If not, Property~\ref{propWes} implies that
\begin{align*}
bg &= c_m (c_m^{-1}bc_m) c_{m-1} \cdots c_1 \\
   &= c_m (b' c_{m-1} \cdots c_1)
\end{align*}
with $b' \in X_{\G_{m-1}}$ and we apply the induction hypothesis.
\end{proof}

The Error Control Property and Nearest Neighbors Properties
together imply nice error control:

\begin{cor}\label{controlnoise}
Assume Error Control Property~\ref{propWes} and
Nearest Neighbors Property~\ref{naygen} hold for
$\G$ with fixed subgroup sequence, initial vector, coset leader sets,
and generating sets $X_{\G_k}$ of $\G_k$.
Assume coset leaders are greed compatible.
If a received vector lies in the decoding region 
containing a nearest neighbor of $g^{-1}\x_0$
due to noise, then the subgroup decoding
algorithm decodes it to a group element
differing from $g$ in only one factor when written
as a product of coset leaders.
\end{cor}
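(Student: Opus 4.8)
The plan is to assemble three earlier results, the only real care being the tracking of inverses. First I would observe that because $\x_0$ has full orbit and the coset leaders are \robust, Theorem~\ref{propAnne} makes every set region minimal, so Theorem~\ref{algorithmworks} guarantees the subgroup decoding algorithm decodes robustly; full orbit also makes $\S=\op{Stab}_{\G}(\x_0)$ trivial, so a received vector lying in a decoding region $\op{DR}(h)$ is decoded precisely to $h$ itself.

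Next I would pin down which $h$ this is. By hypothesis the received vector $\r$ lands in the decoding region whose center codeword $h^{-1}\x_0$ is a nearest neighbor of the intended codeword $g^{-1}\x_0$. Applying Lemma~\ref{nnlemma} with $g$ replaced by $g^{-1}$, the nearest neighbors of $g^{-1}\x_0$ are exactly the codewords $g^{-1}\w$ where $\w$ runs over the nearest neighbors of $\x_0$; writing $\w=b\x_0$ with $b\in N_{\G}$, we get $h^{-1}\x_0=g^{-1}b\x_0$, and since $\x_0$ has full orbit this forces $h^{-1}=g^{-1}b$, i.e.\ $h=b^{-1}g$.

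Finally I would feed this into the error-control machinery. The Nearest Neighbors Property~\ref{naygen} gives $b\in N_{\G}\subseteq X_{\G}\cup X_{\G}^{-1}$, and since this set is closed under inversion, $b^{-1}\in X_{\G}\cup X_{\G}^{-1}$ as well. Writing $g=c_m\cdots c_1$ in canonical form, Theorem~\ref{thmWes} (applicable since the Error Control Property~\ref{propWes} holds along the subgroup sequence for the chosen generators and coset leaders) yields a canonical form $b^{-1}g=c_m'\cdots c_1'$ with $c_i'=c_i$ for all but one index. As $\r$ decodes to $h=b^{-1}g$, the decoded message differs from $g$ in exactly one coset-leader factor — exactly one, since canonical forms are unique and $b\neq I$ because $d_{min}>0$. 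There is no deep obstacle here; the one spot demanding attention is the bookkeeping: decoding regions sit around inverse codewords, nearest neighbors of $g^{-1}\x_0$ have the form $g^{-1}b\x_0$, and Theorem~\ref{thmWes} must be applied to $b^{-1}g$ rather than $bg$, which is legitimate precisely because $X_{\G}\cup X_{\G}^{-1}$ is symmetric.
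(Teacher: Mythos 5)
Your proposal is correct and follows essentially the same route as the paper's proof: use Lemma~\ref{nnlemma} to identify the neighboring decoding region as that of $b^{-1}g$ with $b\in N_{\G}$, invoke Theorem~\ref{algorithmworks} (robust decoding from greed compatibility) to see $\r$ decodes to $b^{-1}g$, and apply Property~\ref{naygen} with Theorem~\ref{thmWes} to conclude the canonical forms differ in one factor. Your version just makes the inverse-tracking explicit (and takes a harmless, unnecessary detour through Theorem~\ref{propAnne}, since Theorem~\ref{algorithmworks} already assumes greed compatibility directly).
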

\begin{proof}
Lemma~\ref{nnlemma} implies that the received vector
$\r$ lies in the decoding region of
 $g^{-1}b\x_0$ for some $b$ in $\G$ with $b\x_0$
a nearest neighbor of $\x_0$.
Theorem~\ref{algorithmworks} then implies that the subgroup decoding algorithm
will correctly decode $\r$ to $b^{-1}g$.
But Property~\ref{naygen} implies that $b$ or $b^{-1}$ lies in $X_{\G}$,
and hence $b^{-1}g$ differs from $g$ in only one factor
by Theorem~\ref{thmWes}.
\end{proof}
\section{Observations about the initial vector} \label{initvec}

Mittelholzer and Lahtonen~\cite{ML} gave an elegant and simple solution to
the problem of choosing the initial vector in the case 
$\G$ is a Coxeter group:
Any unit vector in the 
fundamental region can be taken for the initial vector, some work better
than others, and there is a straightforward algorithm to find the optimal
choice.  The geometry of 
arbitrary groups acting on complex space prevents
a clean generalization,
although the following simple observations can
be useful.

\begin{lm} \label{max-x0}
Fix an initial vector $\x_0$. 
\begin{enumerate}
\item  If $c$ is a complex number with $|c|=1$, and $\y_0 = c\x_0$,
then the code $\G\y_0$ has the same minimum distance as $\G\x_0$.
The nearest neighbors of $\y_0$ are the vectors $a\y_0$ with $a \in N_\G$.
\item If $h \in \G$ and $\z_0 = h\x_0$,
then the code $\G\z_0$ also has the same minimum distance as $\G\x_0$.
In this case, the nearest neighbors of $\z_0$ are the vectors $b\y_0$ with
$b \in hN_\G h^{-1}$.
\end{enumerate}
\end{lm}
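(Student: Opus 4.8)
The plan is to reduce both statements to the single fact that $\G$ acts by $\CC$-linear isometries: multiplication by a unit complex scalar commutes with every group element and is itself an isometry, while conjugation by a group element $h$ is a bijection of $\G$ that carries $\op{Stab}_{\G}(\x_0)$ onto $\op{Stab}_{\G}(h\x_0)$. For part (1), I would simply compute, for any $a \in \G$,
\[
\| a\y_0 - \y_0 \| = \| a(c\x_0) - c\x_0 \| = \| c(a\x_0 - \x_0) \| = |c|\,\| a\x_0 - \x_0 \| = \| a\x_0 - \x_0 \|,
\]
using $\CC$-linearity of $a$ in the second equality and $|c| = 1$ at the end. Since moreover $a\y_0 = \y_0$ if and only if $a\x_0 = \x_0$, we have $\op{Stab}_{\G}(\y_0) = \op{Stab}_{\G}(\x_0) = \S$, so the whole collection of numbers $\{\,\| a\x_0 - \x_0 \| : a \notin \S\,\}$ is unchanged on passing from $\x_0$ to $\y_0$. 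Hence $\G\y_0$ has the same minimum distance $d_{min}$, and the group elements $a$ that attain that minimum — that is, the set $N_{\G}$ — are exactly the same; so the nearest neighbors of $\y_0$ are precisely the vectors $a\y_0$ with $a \in N_{\G}$.

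For part (2) I would first observe that $\G\z_0 = \G h\x_0 = \G\x_0$ because $\G h = \G$; thus $\G\z_0$ is literally the same code as $\G\x_0$ and in particular has the same minimum distance. Since $\z_0 = h\x_0$ is one of its codewords, Lemma~\ref{nnlemma} (applied with $g = h$) says the nearest neighbors of $\z_0$ are the codewords $h\w$ with $\w$ a nearest neighbor of $\x_0$, i.e.\ the vectors $ha\x_0$ for $a \in N_{\G}$. Rewriting $ha\x_0 = (hah^{-1})(h\x_0) = (hah^{-1})\z_0$ and noting that $a \mapsto hah^{-1}$ is a bijection of $N_{\G}$ onto $hN_{\G}h^{-1}$ yields the claim. (Alternatively one could redo the computation of part (1) directly: since $h$ is an isometry, $\| a\z_0 - \z_0 \| = \| h^{-1}ah\,\x_0 - \x_0 \|$, and as $a$ ranges over $\G \setminus h\S h^{-1}$ the element $h^{-1}ah$ ranges over $\G \setminus \S$, so $a$ minimizes the left side exactly when $h^{-1}ah \in N_{\G}$.)

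There is essentially no hard step here; the statement is routine bookkeeping once the action is recognized as $\CC$-linear and unitary. The one point not to overlook is precisely that group elements are $\CC$-linear, so in part (1) the scalar $c$ genuinely passes through $a$ — over a merely real structure one would be forced to take $c = \pm 1$. I would also flag that in the displayed conclusion of part (2) the nearest neighbors should be written as the vectors $b\z_0$ (not $b\y_0$) with $b \in hN_{\G}h^{-1}$.
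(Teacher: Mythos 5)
Your proof is correct and is precisely the routine verification the paper has in mind: the lemma is stated there without proof as a ``simple observation,'' and your computation using $\CC$-linearity of the group action for part (1), together with the reduction of part (2) to Lemma~\ref{nnlemma} (or equivalently conjugation by the isometry $h$), is the expected argument. You are also right to flag that the conclusion of part (2) should read $b\z_0$ rather than $b\y_0$; that is a typo in the statement.
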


The first part of the lemma
suggests that the first entry of $\x_0$ may be taken to be
real (or imaginary), which can be useful.
Although we often choose the initial
vector $\x_0$ to be a 
\emph{real} unit vector, note that
occasionally
it is crucial for the minimality of coset
leaders that the initial vector \emph{not} be real.  
In either case, we usually
adjust the entries to make neighbors realized by a
preferred set of generators (for example, reflections).  
The preceding lemma gives us some guidance in making these adjustments.
\section{Decoding with wreath products}\label{wreath}
In this section, we consider some wreath products 
that act as isometries on finite dimensional complex
space and show that a natural subgroup sequence
and choice of coset leaders produce codes that
not only decode correctly, but also robustly.
We will apply the results to the 
infinite family $\G(r,1,n)$ of complex reflection groups
in Section~\ref{gr1n}.

Let $\H\subset \text{GL}_m(\CC)$
be a finite unitary group acting on the vector space $\CC^m$.
Let $\G$ be the wreath product of $\H$ with the symmetric group
$\Sym_n$,
$$
\G=\H \wr \Sym_n = \Sym_n \ltimes \H^n.
$$
Then $\G$ acts on $\V=\CC^{mn}$ as the unitary group of
all $mn\times mn$ block permutation matrices
with each block a matrix in $\H$.
We adopt a standard left notation for wreath products 
and write each element of $\G$ 
as the product of a permutation in $\Sym_n$ and
an $n$-tuple of matrices from $\H$,
$$
\G=\{\sigma(h_1, \ldots, h_n): h_i\in \H, \sigma \in \Sym_n\},
$$
so that $g(x_1, ...., x_n)
=(h_{\sigma(1)} x_{\sigma(1)}, \ldots, h_{\sigma(n)}x_{\sigma(n)})$
for $g=\sigma^{-1}(h_1, \ldots, h_n)$, where each $x_i$ lies in $\CC^m$.
   
Define a subgroup sequence 
$$
\{I\}=\G_0<\G_1<\cdots < \G_{2n-1}=\G
$$
by setting 
$$
\begin{aligned}
\G_{2\ell-1} &= \{\sigma(h_1, \ldots, h_\ell, 1, \ldots, 1):
\sigma\in \Sym_{\ell}\} 
&\text{ for } \ell=1,\ldots, n,\\
\G_{2\ell} & = \{\sigma(h_1, \ldots, h_{\ell+1}, 1, \ldots, 1):
\sigma\in \Sym_{\ell}\} 
&\text{ for } \ell=1,\ldots, n-1\\
\end{aligned}
$$
(viewing $\Sym_\ell$ as a subset of $\Sym_n$)
so that the subgroups $\G_k$ give block diagonal matrix groups:
$$
\begin{aligned}
\G_{2\ell-1} &= (\H\wr\Sym_\ell)\oplus \{I_{m(n-\ell)}\}\
&\text{ for } \ell=1,\ldots, n,\\
\G_{2\ell} & = (\H\wr\Sym_\ell)\oplus \H \oplus \{I_{m(n-\ell-1)}\}\
&\text{ for } \ell=1,\ldots, n-1,\\
\end{aligned}
$$
with
$I_k$ the $k\times k$ identity matrix.
An obvious choice of coset leaders for pairs of consecutive subgroups
arises. We select block diagonal matrices with one block from $\H$
and the rest the identity or we choose 
cycles in the symmetric group ending at a fixed index:
Set 
$$
\begin{aligned}
\op{CL}(\G_{2 \ell}/ \G_{2\ell -1})
&=
\{ (1,\ldots, 1, h, 1, \ldots, 1): h \in \H\ \text{ in the
$(\ell+1)$-th slot}\}, \\
\op{CL}(\G_{2 \ell+1}/ \G_{2\ell})
&=
\{\text{} (j\ j+1\ \ldots\ \ell +1) 
\in \Sym_{\ell+1}: 1\leq j\leq\ell+1 \}\, .
\end{aligned}
$$
Fix a unit vector $\v_0$ in $\CC^m$ suitable for $\H$,
i.e., so that a unique element
$h$ in $\H$ minimizes $|| h\v_0 -\v_0||$.
Extend $\v_0$ to a initial vector $\x_0$ for $\G$
by setting $\x_0=(u_1\v_0,u_2\v_0,\ldots, u_n\v_0)$
in $\V$ for some real numbers
$u_i$ with $0<u_1<\ldots< u_n$ such that $\x_0$ has unit length.

\begin{thm}\label{wreathgreedy}
Let\/ $\H$ be a finite unitary group and let $\G=\H\wr \Sym_n$
with the above subgroup sequence and
initial vector.
The above choice of coset leaders 
is greed compatible.
\end{thm}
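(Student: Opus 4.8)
The goal is to show that the displayed coset leaders for $\G = \H \wr \Sym_n$ are greed compatible: for each consecutive pair $\G_{k-1} < \G_k$ and each $\x \in \op{FR}(\G_{k-1})$, some coset leader $c$ sends $\x$ into $\op{FR}(\G_k)$. The plan is to treat the two types of steps separately, because the coset leaders come in two flavors (a single $\H$-block insertion, or a symmetric-group cycle), and to exploit the fact that $\op{FR}$ of a wreath-product subgroup decomposes coordinate-wise relative to the initial vector $\x_0 = (u_1\v_0, \ldots, u_n\v_0)$ with $0 < u_1 < \cdots < u_n$. The main preliminary step is to make that decomposition precise: writing a vector $\x = (x_1, \ldots, x_n) \in \V = \CC^{mn}$ with each $x_i \in \CC^m$, I would show that $\x \in \op{FR}(\G_{2\ell-1})$ iff (i) each of the first $\ell$ blocks $x_i$ is "$\H$-reduced" (i.e. $\|x_i - u_i\v_0\| \le \|h x_i - u_i\v_0\|$ for all $h\in\H$, a condition on the angular part of $x_i$ relative to $\v_0$) and (ii) the first $\ell$ blocks are "sorted" so that the more $\v_0$-aligned blocks sit in the later slots — precisely, the inner products $\op{Re}\la x_i, \v_0\ra$ are nondecreasing for $1 \le i \le \ell$ (this is what makes $\la \sigma\x, \x_0\ra = \sum_i u_{\sigma(i)}\op{Re}\la x_i,\v_0\ra$ maximal at $\sigma = I$, by the rearrangement inequality). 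An analogous description holds for $\op{FR}(\G_{2\ell})$, with the $\H$-reducedness extended to block $\ell+1$.

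Granting that description, the two inductive steps are routine. For an even step $\G_{2\ell-1} < \G_{2\ell}$: given $\x \in \op{FR}(\G_{2\ell-1})$, the coset leaders are $(1,\ldots,1,h,1,\ldots,1)$ acting only on slot $\ell+1$; by the choice of $\v_0$ (a unique $h$ minimizes $\|h\v_0 - \v_0\|$, hence a unique $h$ minimizes $\|h x_{\ell+1} - u_{\ell+1}\v_0\|$ for $x_{\ell+1}$ in general position) there is a coset leader $c$ making block $\ell+1$ $\H$-reduced while leaving the other blocks — already reduced and sorted — untouched, so $c\x \in \op{FR}(\G_{2\ell})$. For an odd step $\G_{2\ell} < \G_{2\ell+1}$: the coset leaders are the cycles $(j\ j{+}1\ \cdots\ \ell{+}1)$, which act on $\x$ by a cyclic shift of blocks $x_j, \ldots, x_{\ell+1}$ (together with the identity on their $\H$-parts, since these elements carry no $\H$-components). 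Blocks $1,\ldots,\ell$ are already reduced and sorted and block $\ell+1$ is reduced; choosing $j$ to be the position into which $x_{\ell+1}$ should be inserted to keep $\op{Re}\la \cdot,\v_0\ra$ nondecreasing among the first $\ell+1$ slots gives a coset leader $c$ with $c\x \in \op{FR}(\G_{2\ell+1})$. Since the $\H$-reducedness of a block is unaffected by permuting blocks, and sortedness is exactly the insertion condition, this works.

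The one genuinely delicate point — and the place I expect the real work — is the boundary between "$\le$" and "$<$" in the definition of $\op{FR}$, i.e. handling ties. The fundamental region is defined with strict inequalities (after quotienting by $\op{Stab}$), so to conclude $c\x \in \op{FR}(\G_k)$ I must know that the insertion position $j$ and the $\H$-block $h$ are genuinely uniquely optimal, not merely optimal. For the $\H$-block this is guaranteed by the hypothesis that $\v_0$ is suitable for $\H$ (unique minimizer), but only for $\x$ in general position; for the sorting/insertion step I need the values $\op{Re}\la x_i, \v_0\ra$, $1 \le i \le \ell+1$, to be pairwise distinct, which again holds on a dense open set. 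The clean way to handle this is to observe that $\x_0$ has full orbit under $\G$ (since $u_1 < \cdots < u_n$ and $\v_0$ is suitable, $\op{Stab}_\G(\x_0)$ is trivial), so by Theorem~\ref{propAnne} greed compatibility is equivalent to region minimality; and region minimality of each $\op{CL}(\G_k/\G_{k-1})$ is a statement about $\op{FR}(\G_k) \subseteq c(\op{FR}(\G_{k-1}))$, which is a containment of open sets that can be checked on the dense subset where all the relevant inner products are distinct and then extended by openness. So I would phrase the argument as: verify region minimality of each consecutive coset-leader set via the coordinate-wise description above on vectors in general position, then invoke Theorem~\ref{propAnne} to get greed compatibility. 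The main obstacle is thus not any deep idea but the bookkeeping of the two-step subgroup sequence together with the ties, and the cleanest route is to route everything through region minimality rather than checking greed compatibility directly.
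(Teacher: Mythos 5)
Your proposal is correct and follows essentially the same route as the paper: expand $\Vert g\x - \x_0\Vert^2$ coordinate-wise, apply the rearrangement inequality to get the explicit ``$\H$-reduced and sorted'' description of $\op{FR}(\G_{2\ell-1})$ and $\op{FR}(\G_{2\ell})$, and then verify the block step and the insertion step separately. The only divergence is your final detour through region minimality and Theorem~\ref{propAnne} to handle ties; the paper instead verifies greed compatibility directly, tacitly choosing the (generically unique) optimal $h$ and insertion position, so your version is if anything slightly more careful on that boundary issue.
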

\begin{proof}
We first note conditions 
that minimize a distance $||g\x - \x_0||$
over $g$ in $\G$.
Fix $\x=(x_1, \ldots, x_n)$ in $\V=\CC^{mn}$ with
each $x_i$ in $\CC^m$
and write an arbitrary $g$ in $\G$ as a product
$\sigma^{-1}(h_1, \ldots, h_n)$ with each $h_i$ in $\H$
and $\sigma$ in $\Sym_n$.
Then
$$
||g\x -\x_0||^2=
||\x_0||^2+||\x||^2 
- 2 \sum_{1\leq j\leq n} u_j \text{Re}\left(
\v_0^H h_{\sigma(j)}x_{\sigma(j)}\right)  
$$
where the superscript $H$ denotes conjugate transpose.
The distance $||g\x-\x_0||^2$ is minimal 
when the summation over $j$ in the last expression is maximal.
But recall that for any two strictly increasing sequences
of positive real numbers
$0 < \alpha_1 < \ldots < \alpha_k$ and $0 < \beta_1 < \ldots < \beta_k$, 
the sum $\sum a_jb_{\tau(j)}$ 
is maximized over all $\tau$ in $\Sym_k$
by $\tau = I$.
Hence $||g\x-\x_0||$ is minimal
over all $g$ in $\G$  when 
\begin{enumerate}
\item[(a)] 
$h_i$ maximizes $\text{Re}(\v_0^H h_i x_i)$ 
over all elements in $\H$ for $i=1,\ldots, n$,
and 
\item[(b)] $\sigma$ in $\Sym_n$ is chosen so that 
$$
\text{Re}\left(\v_0^H h_{\sigma(1)} x_{\sigma(1)}\right)
\leq \ldots \leq
\text{Re}\left(\v_0^H h_{\sigma(n)} x_{\sigma(n)}\right).
$$
\end{enumerate}
Note that if each $h_i$ in $(a)$ above is unique
and the inequalities in $(b)$ are strict,
then a unique group element $g$ minimizes
$||g\x-\x_0||$.
We apply this observation to the subgroups $\G_k$
in the subgroup sequence and conclude that
$$
\begin{aligned}
\op{FR}(\G_{2\ell-1})&=
\{(w_1,\ldots, w_n):  w_i \in \CC^m,\
\text{Re}\left(\v_0^H w_1\right)<\ldots < 
\text{Re}\left(\v_0^H w_{\ell}\right),\\
& \hphantom{aaa}
\text{Re}\left(\v_0^H w_i\right)>
\text{Re}\left(\v_0^H hw_i\right)
\text{ for all }
I\neq h \in \H, 1\leq i\leq \ell\},\\
\op{FR}(\G_{2\ell})&=
\{(w_1,\ldots, w_n):  w_i \in \CC^m,\
\text{Re}\left(\v_0^H w_1\right)<\ldots < 
\text{Re}\left(\v_0^H w_{\ell}\right),\\
& \hphantom{aaa}
\text{Re}\left(\v_0^H w_i\right)>
\text{Re}\left(\v_0^H hw_i\right)
\text{ for all }
I\neq h \in \H, 1\leq i\leq \ell + 1\}.
\end{aligned}
$$

Suppose
$\x$ lies in some $\op{FR}(\G_{2\ell})$ 
and choose the unique coset leader $d$
from $\op{CL}(\G_{2\ell+1}/ \G_{2\ell})$ 
with  $d\x=(x_{\sigma(1)}, \ldots, x_{\sigma(n)})$
and
$$\text{Re}(\v_0^H x_{\sigma(1)})<\ldots<
\text{Re}(\v_0^H x_{\sigma(\ell +1)}).$$ 
Then $d\x$ lies in $\op{FR}(\G_{2\ell +1})$.
Now suppose $\x$ instead lies in $\op{FR}(\G_{2\ell-1})$ 
and choose the unique 
element $h$ in $\H$ maximizing 
$\text{Re}(\v_0^H h x_{\ell+1})$.  
Let $d$ in $\op{CL}(\G_{2\ell}/ \G_{2\ell-1})$ be the corresponding
coset leader (i.e., $d=I_{\ell}\oplus h \oplus I_{nm-\ell-1}$).
Then $d\x$ lies in $\op{FR}(\G_{2\ell})$.
Hence each 
$\op{CL}(\G_k/\G_{k-1})$ is a set of \robust\ coset leaders
for $k$ even or odd.
\end{proof}

Theorem~\ref{algorithmworks} implies that the subgroup decoding algorithm
decodes wreath product codes robustly:

\begin{cor}\label{wreathdecodescorrectly}
Let\/ $\H$ be a finite unitary group and let $\G=\H\wr \Sym_n$ 
with the above natural choice of subgroup sequence,
coset leaders, and initial vector.
Then the subgroup decoding algorithm decodes robustly.
\end{cor}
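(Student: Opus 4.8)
The plan is to treat this as a direct corollary, chaining the two main results already in hand. Theorem~\ref{wreathgreedy} has just established that, for $\G=\H\wr\Sym_n$ equipped with the block-diagonal subgroup sequence $\{I\}=\G_0<\G_1<\cdots<\G_{2n-1}=\G$, the explicit coset leaders $\op{CL}(\G_{2\ell}/\G_{2\ell-1})$ and $\op{CL}(\G_{2\ell+1}/\G_{2\ell})$, and the initial vector $\x_0=(u_1\v_0,\ldots,u_n\v_0)$ with $0<u_1<\cdots<u_n$, every set $\op{CL}(\G_k/\G_{k-1})$ is greed compatible. So the first step is simply to verify that the hypotheses of Theorem~\ref{algorithmworks} are satisfied in this situation: $\G$ is a finite unitary group acting on $\V=\CC^{mn}$ (it consists of $mn\times mn$ block permutation matrices with blocks drawn from the unitary group $\H$, hence is itself unitary), $\x_0$ is a vector in $\V$, we have a subgroup sequence, and we have coset leader sets. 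All of these are in place by construction.

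Given that, the second step is to invoke Theorem~\ref{algorithmworks} verbatim: since each $\op{CL}(\G_k/\G_{k-1})$ is greed compatible (equivalently, \robust\ in the paper's terminology), that theorem tells us the subgroup decoding algorithm decodes robustly — that is, any received vector $\r$ lying strictly closer to a codeword $g^{-1}\x_0$ than to every other codeword decodes to a group element equivalent to $g$. (Here ``equivalent'' collapses to ``equal'' if $\x_0$ has full orbit, which happens precisely when $\v_0$ has full orbit under $\H$ and the $u_i$ are distinct, but the statement holds in general without that assumption.)

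Since all the real content lives in Theorem~\ref{wreathgreedy} — the rearrangement-inequality computation identifying $\op{FR}(\G_{2\ell-1})$ and $\op{FR}(\G_{2\ell})$ and the check that the chosen permutation-cycle and single-block coset leaders push $\op{FR}(\G_{k-1})$ into $\op{FR}(\G_k)$ — there is no genuine obstacle left here; the only thing requiring a sentence of care is confirming that the wreath product indeed acts by unitary matrices so that Theorem~\ref{algorithmworks} applies as stated, and noting that the theorem's conclusion of robust decoding subsumes correct decoding with some noise.

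\begin{proof}
By Theorem~\ref{wreathgreedy}, each coset leader set $\op{CL}(\G_k/\G_{k-1})$ in the given subgroup sequence is \robust. Moreover $\G=\H\wr\Sym_n$ acts on $\V=\CC^{mn}$ by unitary matrices, namely the $mn\times mn$ block permutation matrices whose nonzero blocks lie in the unitary group $\H$. Thus the hypotheses of Theorem~\ref{algorithmworks} are satisfied, and that theorem yields that the subgroup decoding algorithm decodes robustly (and hence also correctly with some noise).
\end{proof}
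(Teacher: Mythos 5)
Your proposal is correct and matches the paper's own argument exactly: the paper derives this corollary in one line by combining Theorem~\ref{wreathgreedy} (the chosen coset leaders are greed compatible) with Theorem~\ref{algorithmworks} (greed compatible coset leaders yield robust decoding). Your extra sentence verifying that the wreath product acts by unitary matrices is a reasonable bit of added care but does not change the route.
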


We now investigate error control for wreath products.
We fix a 
set of generators  $X_k$ for each subgroup $\G_k$ in the subgroup
sequence: If $k$ is odd, we 
choose block diagonal matrices that are the identity except first block from $\H$
together with a set of consecutive transpositions in $\Sym_n$;
if $k$ is even, we add on block diagonal matrices that are the 
identity except for
a single block from $\H$.
Set
$$
\begin{aligned}
X_{2\ell-1}&=\{ (h,1,\ldots, 1), h\in X_{\H}\}
\cup 
\{(1\ 2), (2\ 3), \ldots, (\ell -1\ \ell)\}, \\
X_{2\ell}&=X_{2\ell -1} 
\cup
\{(1, \ldots, 1, h, 1\ldots, 1): h \in \H
\text{ in the $(\ell+1)$-th slot} \}\, .
\end{aligned}
$$

With these choices, we have good error control:
\begin{proposition}\label{errorcontrolwreath}
Let\/ $\H$ be a finite unitary group and let\/ $\G=\H \wr \Sym_n$. 
The above natural choice of subgroup sequence,
coset leaders, initial vector, and
generators for each subgroup
in the subgroup sequence satisfies Error Control Property~\ref{propWes}.
\end{proposition}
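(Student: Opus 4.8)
The plan is to verify Error Control Property~\ref{propWes} directly, consecutive pair by consecutive pair in the subgroup sequence; there are two kinds of pairs, plus the trivial initial pair $\{I\}=\G_0<\G_1$ (where $\op{CL}(\G_1/\G_0)=\G_1$, so $bc\in\G_1$ is always a coset leader and nothing needs checking). For the ``matrix-slot'' pairs $\G_{2\ell-1}<\G_{2\ell}$, the set $\op{CL}(\G_{2\ell}/\G_{2\ell-1})$ consists precisely of the block-diagonal matrices with a single block $h\in\H$ in the $(\ell+1)$-th slot, and this same set is exactly the collection of generators added in passing from $X_{2\ell-1}$ to $X_{2\ell}$. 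So, given $b\in X_{2\ell}\cup X_{2\ell}^{-1}$ and $c\in\op{CL}(\G_{2\ell}/\G_{2\ell-1})$, I split into two subcases. If $b$ (or $b^{-1}$) is one of the new generators, then $b$ and $c$ are both single-block matrices in slot $\ell+1$, so $bc$ is again such a matrix and hence lies in $\op{CL}(\G_{2\ell}/\G_{2\ell-1})$. If instead $b\in X_{2\ell-1}\cup X_{2\ell-1}^{-1}$, then $b$ acts only on slots $1,\ldots,\ell$ while $c$ acts only on slot $\ell+1$, so $b$ and $c$ commute and $c^{-1}bc=b\in X_{2\ell-1}\cup X_{2\ell-1}^{-1}\subseteq X_{2\ell}\cup X_{2\ell}^{-1}$. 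Either way the property holds.

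For the ``permutation'' pairs $\G_{2\ell}<\G_{2\ell+1}$, the coset leaders are the descending cycles $c_j:=(j\ j+1\ \ldots\ \ell+1)$ in $\Sym_{\ell+1}$ for $1\le j\le\ell+1$ (with $c_{\ell+1}=I$), and $X_{2\ell+1}=\{(h,1,\ldots,1):h\in X_{\H}\}\cup\{(1\ 2),\ldots,(\ell\ \ell+1)\}$. Fix $b\in X_{2\ell+1}\cup X_{2\ell+1}^{-1}$ and $c=c_j$. If $b=(h,1,\ldots,1)^{\pm1}$ is a slot-$1$ matrix, a routine computation from the displayed action of $\G$ on $\V$ shows that $c^{-1}bc$ is the same matrix placed in slot $c_j^{-1}(1)$; when $j\ge 2$ this slot is $1$, so $c^{-1}bc=b\in X_{2\ell}\cup X_{2\ell}^{-1}$, and when $j=1$ this slot is $\ell+1$, where (crucially) $X_{2\ell}$ already contains a full copy of $\H$, so again $c^{-1}bc\in X_{2\ell}\cup X_{2\ell}^{-1}$. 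If instead $b=(k\ k+1)$ with $1\le k\le\ell$, I case-split on $j$. For $j\in\{k,k+1\}$, the identity $(k\ k+1)(k\ k+1\ \ldots\ \ell+1)=(k+1\ \ldots\ \ell+1)$ together with its consequence $(k\ k+1)c_{k+1}=c_k$ gives $bc\in\{c_k,c_{k+1}\}\subseteq\op{CL}(\G_{2\ell+1}/\G_{2\ell})$. For $j>k+1$, the supports $\{k,k+1\}$ and $\{j,\ldots,\ell+1\}$ are disjoint, so $c^{-1}bc=(k\ k+1)$, which lies in $X_{2\ell}$ because $j>k+1$ and $j\le\ell+1$ force $k+1\le\ell$. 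For $j<k$, a direct computation gives $c_j^{-1}(k\ k+1)c_j=(k-1\ k)$, which lies in $X_{2\ell}$ because $j<k$ and $j\ge 1$ force $2\le k\le\ell$. This exhausts all cases.

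The verification is entirely routine once the bookkeeping is set up; the one point that needs genuine care rather than formal manipulation is the slot-$1$ matrix generator conjugated by the long cycle $c_1$ in the permutation pairs. One must confirm that the action convention sends the block to slot $\ell+1$ --- precisely the slot in which $X_{2\ell}$ was built to carry all of $\H$ --- and not to slot $2$, where it would fail to be a generator unless $\ell=1$. I expect this to be the only subtle step: it is really a consistency check between the two design choices (using all of $\H$ in slot $\ell+1$ for $X_{2\ell}$ versus only $X_{\H}$ in slot $1$), and once the conjugation rule $c^{-1}(\text{slot-}i\text{ matrix})c=(\text{slot-}c^{-1}(1)\text{ matrix})$ is pinned down, every case above reduces to a one-line computation. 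I would also remark that the endpoint pairs of the sequence require only the trivial observations made in the first paragraph.
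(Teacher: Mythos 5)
Your proof is correct and follows essentially the same case-by-case verification as the paper's: commuting cases give $c^{-1}bc = b$, the adjacent-transposition cases produce another coset leader via $(k\ k{+}1)c_k = c_{k+1}$, and the long cycle conjugates the slot-$1$ block into slot $\ell+1$, where $X_{2\ell}$ carries all of $\H$. Your extra bookkeeping (checking that $(k\ k{+}1)$ and $(k{-}1\ k)$ actually lie in the generating set $X_{2\ell}$ in the disjoint-support and $j<k$ cases, and handling the trivial pair $\G_0<\G_1$) is slightly more careful than the paper's write-up but does not change the argument.
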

\begin{proof}
Fix a pair of nested subgroups with smaller group of odd index,
say
$\G_{2\ell-1} < \G_{2\ell}$.  Take any $b$ in $X_{2\ell}$
and any $c=(1, \ldots, 1, h, 1, \ldots, 1)$
in $\op{CL}(\G_{2\ell}/\G_{2\ell-1})$,
with $h \in \H$.
If $b$ lies in $X_{2\ell-1}$, then $b$ and $c$ commute
and $c^{-1}bc=b\in X_{2\ell-1}$.
If $b\notin X_{2\ell-1}$, then $bc \in \op{CL}(\G_{2\ell}/\G_{2\ell-1})$.
Thus Error Control Property~\ref{propWes} is satisfied.

Now fix a pair of nested subgroups with smaller group of even index,
say
$\G_{2\ell} < \G_{2\ell+1}$.  Take any $b$ in $X_{2\ell+1}$
and any $c=(j\ j+1\ \ldots\ l+1)$ 
in $\op{CL}(\G_{2\ell+1}/\G_{2\ell})$.
First suppose $b=(h,1,\ldots, 1)$ with  $h$ in $\H$.
If $j>1$, then $c^{-1}bc=b\in X_{2\ell}$ (as $c$ and $b$ commute), while
if $j=1$, then $c^{-1}bc = (1,\ldots, 1, h, 1, \ldots, 1)
\in X_{2\ell}$.
Now suppose that $b=(i-1\ \ i)$ for some $i\leq \ell+1$.
If $i<j$, then $c^{-1} b c=b\in X_{2\ell}$ as $c$ and $b$ commute;
if $i=j$, then $bc$ is the coset leader
$(i-1\ \ldots\ \ell+1) \in \op{CL}(\G_{2\ell+1}/\G_{2\ell})$;
if $i=j+1$, then $bc$ is the coset leader
$(i\ \ldots\ \ell+1) \in \op{CL}(\G_{2\ell+1}/\G_{2\ell})$;
and if $j+1<i$,
then $c^{-1}bc=(i-2\ \ i-1)\in X_{2\ell}$.
Thus Error Control Property~\ref{propWes}  
is satisfied in this case as well.
\end{proof}

Theorem~\ref{thmWes} then implies that errors can be controlled
when they occur:

\begin{cor}
Let\/ $\H$ be a finite unitary group and let $\G=\H\wr \Sym_n$ 
with the above natural choice of subgroup sequence,
coset leaders, initial vector, and generating sets
$X_k\subset \G$ for each $\G_k$.
Assume the Nearest Neighbors Property~\ref{naygen} holds.
If a received vector lands in the decoding region 
containing a nearest neighbor of $g^{-1}\x_0$
due to noise, then the subgroup decoding
algorithm decodes it to a group element
differing from $g$ in only one factor when written
as a product of coset leaders.
\end{cor}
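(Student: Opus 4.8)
The plan is to observe that this corollary is simply the specialization of Corollary~\ref{controlnoise} to the wreath product $\G = \H \wr \Sym_n$, so the entire task is to confirm that every standing hypothesis of Corollary~\ref{controlnoise} is already in force under the choices made in this section.

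First I would recall that Theorem~\ref{wreathgreedy} shows the selected coset leaders $\op{CL}(\G_k/\G_{k-1})$ are \robust\ for the given subgroup sequence $\{I\} = \G_0 < \cdots < \G_{2n-1} = \G$ and the initial vector $\x_0 = (u_1\v_0, \ldots, u_n\v_0)$. Next, Proposition~\ref{errorcontrolwreath} establishes that Error Control Property~\ref{propWes} holds for the generating sets $X_k$ chosen above, namely the block-diagonal matrices drawn from a generating set of $\H$ together with consecutive transpositions, augmented at even stages by a single extra block from $\H$. The Nearest Neighbors Property~\ref{naygen} for $X_\G = X_{2n-1}$ is assumed by hypothesis. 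Hence the three assumptions of Corollary~\ref{controlnoise}---greed compatibility, Error Control, and Nearest Neighbors---all hold for $\G = \H \wr \Sym_n$ with the stated data.

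Then I would invoke Corollary~\ref{controlnoise} directly: if the received vector $\r$ lands in the decoding region containing a nearest neighbor of $g^{-1}\x_0$, then Lemma~\ref{nnlemma} identifies that neighbor as $g^{-1}b\x_0$ for some $b \in \G$ with $b\x_0$ a nearest neighbor of $\x_0$; Theorem~\ref{algorithmworks} (valid since \robust\ coset leaders decode robustly) gives that $\r$ decodes to $b^{-1}g$; Property~\ref{naygen} forces $b$ or $b^{-1}$ to lie in $X_\G$; and Theorem~\ref{thmWes}, applicable because Error Control Property~\ref{propWes} holds, shows that $b^{-1}g$ differs from $g$ in exactly one coset-leader factor. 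This is precisely the asserted conclusion.

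I do not expect any real obstacle here beyond bookkeeping, since the substantive work was carried out in Theorem~\ref{wreathgreedy} (computing the fundamental regions $\op{FR}(\G_k)$ and verifying greed compatibility) and Proposition~\ref{errorcontrolwreath} (the case analysis on transpositions versus $\H$-blocks for the Error Control Property). The one point deserving a moment's care is to check that the generating set under which the Nearest Neighbors Property is assumed is the same $X_\G = X_{2n-1}$ appearing in both earlier results, and that the indexing of the $2n-1$ subgroups is used consistently; once that is noted, the corollary follows at once from Corollary~\ref{controlnoise}.
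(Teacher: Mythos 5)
Your proposal is correct and follows the same route the paper intends: the corollary is exactly the specialization of Corollary~\ref{controlnoise} to $\G=\H\wr\Sym_n$, with greed compatibility supplied by Theorem~\ref{wreathgreedy}, the Error Control Property by Proposition~\ref{errorcontrolwreath}, and the Nearest Neighbors Property assumed. The chain you retrace (Lemma~\ref{nnlemma}, Theorem~\ref{algorithmworks}, Property~\ref{naygen}, Theorem~\ref{thmWes}) is precisely the paper's proof of Corollary~\ref{controlnoise}, so nothing is missing.
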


\begin{remark}
\em{
One may interpolate a sequence of subgroups of $\H$ to 
refine the above process and improve the decoding efficiency.  
At the even stages, one could splice a fixed subgroup sequence for $\H$ 
into the $(l+1)$-st coordinate and replace
$\G_{2\ell}$ with a new sequence.
One should take robust coset leaders for the subgroup
sequence of $\H$ 
and fix generators satisfying the Error Control Property~\ref{propWes}
for $\H$ so that the wreath product
$\G=\H\wr \Sym_n$ with the refined subgroup sequence
would also inherit robust decoding with error control.
But one could also use other methods to decode $\H$ at the even steps.  
That is the process envisioned in the decoding of wreath products
in Nation and Walker~\cite{JBNCW}, where the Snowflake Algorithm is used
to decode $\H$ at the even steps.
}
\end{remark}

\section{Unitary groups and reflection groups} \label{reflections}

The set of all $n \times n$ complex unitary matrices forms a group 
$\mathbb U(n)$, and the various groups we use for coding
are contained in its infinite subgroup
of monomial 
matrices (i.e., those with a single nonzero entry in each row
and in each column)
whose nonzero entries have norm 1.
If $r\geq 1$ is an integer, 
the group $\G(r,1,n)$ 
consists of monomial $n\times n$
matrices whose nonzero entries 
are $r$-th roots
of unity.  For any integer $p$ dividing $r$, the 
group $\G(r,p,n)$ consists of those matrices in $\G(r,1,n)$
whose nonzero entries 
multiply to an $(r/p)$-th root of unity.
For example, $\G(2,2,n)$ is the real Coxeter group $W\! D_n$.

A \emph{reflection} on a real or complex vector space
is a non-identity linear transformation
that fixes a hyperplane in that space pointwise.
Every reflection $s$ satisfies
   \[ s(\x)=\x + l_H(\x) {\boldsymbol{\alpha}} \text{ for all }\x \in \V \]
for some fixed vector $\boldsymbol{\alpha}$ in $\V$
and some
linear form $l_H$ in the dual space $\V^*$ that defines the reflecting
hyperplane $H$ fixed by $s$ (i.e., $\ker l_H=H$).
If $s$ is an isometry (for example, if $s$ has finite order),
then $s$ is the diagonal matrix
$\text{diag}(\lambda, 1, \ldots, 1)$ with respect to 
some basis of $V$ with
$\lambda=\det(s)$ the nonidentity eigenvalue of absolute value 1.
(In particular, $s$ has finite order if and only if $\lambda$
is a root-of-unity.)
In this case, 
we may choose $\boldsymbol{\alpha}$ to be a vector perpendicular 
to $H$ (with respect to an $s$-invariant inner product $\langle\ ,\ \rangle$
on $\V$) of length one and choose $l_H$ to be the function
$$
l_H(\x)=(\lambda-1) \langle \alpha,\x \rangle
\text{ for all } \x \in \V\, .
$$
If $s$ is a reflection on a real vector space, 
then $\lambda=-1$, and $s$ is an involution.  

A complex \emph{reflection group} is a group generated by a set of 
reflections on $\V=\CC^n$.  We assume all reflection groups are finite
and thus unitary with respect to the standard inner product. 
Note that every real
reflection group defines a complex reflection group
after extending scalars.
The finite irreducible complex reflection groups were classified in
a classic paper of Shephard and Todd~\cite{ST}:
Every finite irreducible complex reflection group is 
\begin{enumerate}
   \item $\G(r,p,n)$ for some $r,p,n\geq 1$ with $p$ dividing $r$, or
   \item one of the exceptional groups denoted $\G_4, \ldots, \G_{37}$.
\end{enumerate}

The irreducible real reflection groups (acting orthogonally)
are commonly designated as
$W\! A_n$, $W\! B_n$, $W\! D_n$, $W\! E_6$, $W\! E_7$, $W\! E_8$, 
$W\! F_4$, $I_r(2)$, $H_3$ and $H_4$
or some variant of this notation;
see standard texts such as
Grove and Benson~\cite{GB}, Humphreys~\cite{JH} or Kane~\cite{RK}.  
We are mainly interested in groups generalizing
the infinite families $\text{Sym}_n=\G(1,1,n)$ (the symmetric group
acting by $n\times n$ permutation matrices), 
$W\! B_n=\G(2,1,n)$, and $W\! D_n=\G(2,2,n)$.
These are often called {\em permutation groups} in the literature
on group coding as they generalize the permutation group
$\G(1,1,n)$.

\section{Infinite family of complex reflection groups $\G(r,1,n)$} 
\label{gr1n}

We apply the above decoding program to the 
complex reflection
groups $\G(r,1,n)$ for arbitrary integers $n,r\geq 1$
in this section.  
We obtain efficient codes with good error control 
properties that resist channel noise.
These groups are wreath products acting
by isometries on $\CC^n$, specifically, 
extensions 
of $(\mathbb Z/r\mathbb Z)^n$ by the symmetric group $\Sym_n$:
$$\G(r,1,n) \cong 
\Sym_n
\ltimes
(\ZZ/ r \ZZ )^n  
\quad\text{ and }\quad
|\G(r,1,n)|=n!\, r^n\, .
$$
Let $\om$ be the primitive complex
$r$-th root-of-unity $e^{\frac {2 \pi i}r}$, so that 
$\G(r,1,n)$ is the set all 
matrices with a single nonzero entry in each row and in each
column, that entry being a power of $\om$.

Consider the diagonal transformations
$a_i$ $(1 \leq i \leq n)$ that multiply the $i$-th entry of a vector
by $\om$ and the transpositions $b_j$ for $1 \leq j < n$ that switch the 
$j$-th and $(j+1)$-st coordinates.  Then $b_1,\ldots, b_{n-1}$
generate the symmetric group $\G(1,1,n)\leq \G(r,1,n)$
and every element of $\G(r,1,n)$ can be written uniquely as a product
of a permutation matrix (generated by the $b_i$)
and a diagonal matrix (generated by the $a_i$).
Fix an initial vector
$\x_0 = (u_1, \ldots, u_n)$ with $0<u_1<\ldots<u_n$ real.

\subsection{Defining relations for the group}
We will use the
Coxeter-like abstract presentation for $\G(r,1,n)$
in terms of generators and canonical braid relations:
$$
\begin{aligned}
\G(r,1,n)=\langle & a_1, b_1, \ldots, b_{n-1}:\ \ 
a_1^r=1=b_i^2,\\
& b_i b_j= b_j b_i\text{ for } |i-j|>1,
a_1 b_j = b_j a_1\text{ for } 1\neq j\neq 2, \\
& b_i b_{i+1} b_i = b_{i+1} b_i b_{i+1}, 
a_1 b_1 a_1 b_1 = b_1 a_1 b_1 a_1
\rangle .
\end{aligned}
$$
In other words, the following Coxeter-Dynkin diagram
gives the abstract group structure for $\G(r,1,n)$:

\hphantom{aaa}

\begin{figure}[ht]
\label{DynkinDiagram}
\begin{center} 
\begin{tikzpicture}
\node[root] (j)  {{\tiny $r$}}; 
\node[root] (k) [right=of j] {{\tiny 2}};  
\node[root] (m) [right=of k] {{\tiny 2}};  
\node[root] (n) [right=of m] {{\tiny 2}};  
\node (o) [right=of n] {$\quad\ldots\quad$}; 
\node[root] (p) [right=of o] {{\tiny 2}}; 
\node[root] (q) [right=of p] {{\tiny 2}};  
\draw[thick] (k) -- (m) -- (n); 
\draw[thick] (n) -- (o); 
\draw[thick] (o) -- (p); 
\draw[thick] (p) -- (q); 
\draw[double, double distance = 3pt] (j) -- (k); 
\end{tikzpicture} 
\end{center} 
\end{figure}

\hphantom{aaa}

\subsection{Subgroup sequence, coset leaders, and generators}
Consider the nested sequence of subgroups 
$$\{I\}=\G_0 < \G_1  < \ldots  < \G_{2n-1}=\G$$
given as block diagonal matrix groups
$$
\begin{aligned}
\G_{2\ell-1}&= \G(r,1,\ell)\oplus \{I_{n-\ell}\}
&\quad\quad\text{for }\ell=1,\ldots, n,\\
\G_{2\ell}&= \G(r,1,\ell)\oplus \G(r,1,1)\oplus\{I_{n-\ell-1}\}
&\quad\quad\text{for }\ell=1,\ldots, n-1\\
\end{aligned}
$$
where $I_\ell$ is the $\ell\times \ell$ identity matrix.
Fix coset leaders for $\G_{k}$ over $\G_{k-1}$ by setting
$$\begin{aligned}
\op{CL}(\G_{2\ell}/ \G_{2\ell -1})&=
\{I, a_{\ell+1}, a_{\ell+1}^2, \ldots, a_{\ell+1}^{r-1}\},\\
\op{CL}(\G_{2\ell+1}/ \G_{2\ell})&=
\{I, b_{\ell}, b_{\ell-1} b_{\ell}, \ldots, b_2 b_3\cdots b_{\ell},
b_1 b_2 \cdots b_{\ell} \}.
\end{aligned}
$$
We choose generators $X_k\subset \G$
for the subgroups $\G_k$ to reflect the fact that 
(at the even steps) $\G_{2k}$ is obtained by adding adding a generator 
$a_{\ell+1}$ that commutes with the elements of $\G_{2k-1}$
and (at the odd steps) 
$\G_{2\ell+1}$ is obtained by adding adding the transposition
$b_{\ell}$:
Set
$$
\begin{aligned}
X_{2\ell-1}&=\{a_1, b_1, b_2, \ldots, b_{\ell-1}\}\, ,\\
X_{2\ell}&=\{a_1, b_1, b_2, \ldots, b_{\ell-1}, a_{\ell+1}\}.\\
\end{aligned}
$$

\begin{table}[]
\caption{Subgroup sequence for $\G(r,1,n)$} \label{table:first}
\centering
\begin{tabular}{|c|c|c|}
\hline
{$k$}&{Generating set $X_k$ for $\G_k$}&{Coset leaders for $\G_k$ over $\G_{k-1}$}
\rule[-1ex]{0ex}{3.5ex}
\\
\hline
$0$ & $I$ & {} 
\rule[-1ex]{0ex}{3.5ex}
\\
\hline
$1$ & $a_1$ & $a_1, a_1^2, \ldots, a_1^r=I$
\rule[-1ex]{0ex}{3.5ex}
\\
\hline
$2$ & $a_1, a_2$ & $a_2, a_2^2, \ldots, a_2^r=I$
\rule[-1ex]{0ex}{3.5ex}
\\
\hline
$3$ & $a_1, b_1$ & $I, b_1$ 
\rule[-1ex]{0ex}{3.5ex}
\\
\hline
$4$ & $a_1, b_1, a_3$ & 
$a_3, a_3^2, \ldots, a_3^r=I$
\rule[-1ex]{0ex}{3.5ex}
\\
\hline
$5$ & $a_1, b_1, b_2$ & $I, b_2, b_1b_2$ 
\rule[-1ex]{0ex}{3.5ex}
\\
\hline
$\vdots$ & $\vdots$ & $\vdots$
\rule[-1ex]{0ex}{3.5ex}
 \\
\hline
$2n-2$ & $a_1, b_1, \ldots, b_{n-2}, a_n$ & $a_n, a_n^2, \ldots, a_n^{r}=I$
\rule[-1ex]{0ex}{3.5ex}
\\
\hline
$2n-1$ & $a_1, b_1, \ldots, b_{n-1}$ & $I, (b_j\cdots b_{n-1})$ 
for $1 \leq j \leq n-1$
\rule[-1ex]{0ex}{3.5ex}
\\
\hline
\end{tabular}
\end{table}
\
\subsection{Correct and Robust Decoding}

The above choices coincide with 
the natural choice of subgroup sequence, coset leaders, and initial vector for general wreath products given in Section~\ref{wreath}.
Thus Corollary~\ref{wreathdecodescorrectly} implies
\begin{cor}\label{rightchoice}
With the above choice of subgroup sequence, coset leaders, and initial vector,
the subgroup decoding algorithm for $\G(r,1,n)$ (for any $r$ and any $n$)
decodes robustly:
For all $g$ in $\G(r,1,n)$, any received vector in the decoding region
of $g$ decodes to $g$.
\end{cor}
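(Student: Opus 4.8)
The plan is to recognize $\G(r,1,n)$ as a special case of the wreath products treated in Section~\ref{wreath} and then invoke Corollary~\ref{wreathdecodescorrectly} directly. Take $\H=\G(r,1,1)$, the cyclic group of order $r$ generated by $a_1$, acting as a finite unitary group on $\CC^m$ with $m=1$ by multiplication by powers of $\om$. Then $\G(r,1,n)\cong\Sym_n\ltimes\H^n=\H\wr\Sym_n$ acting on $\V=\CC^{mn}=\CC^n$ as the block permutation matrices with $1\times1$ blocks, which are exactly the monomial matrices whose nonzero entries are powers of $\om$.

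First I would check that the subgroup sequence of this section is the one built in Section~\ref{wreath} for this $\H$: indeed $\G_{2\ell-1}=\G(r,1,\ell)\oplus\{I_{n-\ell}\}=(\H\wr\Sym_\ell)\oplus\{I_{m(n-\ell)}\}$ and $\G_{2\ell}=\G(r,1,\ell)\oplus\G(r,1,1)\oplus\{I_{n-\ell-1}\}=(\H\wr\Sym_\ell)\oplus\H\oplus\{I_{m(n-\ell-1)}\}$, the block-diagonal subgroups prescribed there. Next I would match the coset leaders: $\op{CL}(\G_{2\ell}/\G_{2\ell-1})=\{I,a_{\ell+1},\ldots,a_{\ell+1}^{r-1}\}$ is precisely the set of block-diagonal matrices with the $(\ell+1)$-st block ranging over $\H$, and $\op{CL}(\G_{2\ell+1}/\G_{2\ell})=\{I,b_\ell,b_{\ell-1}b_\ell,\ldots,b_1\cdots b_\ell\}$ is the set of cycles $(j\ j+1\ \cdots\ \ell+1)$ for $1\le j\le\ell+1$, since $b_jb_{j+1}\cdots b_\ell$ is the cycle carrying $\ell+1$ down to $j$ under the paper's conventions. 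For the initial vector, put $\v_0=1\in\CC^1$; since $\H$ acts freely on $\CC\setminus\{0\}$, the unique element of $\H$ minimizing $\|h\v_0-\v_0\|$ is $I$, so $\v_0$ is suitable for $\H$, and $\x_0=(u_1,\ldots,u_n)=(u_1\v_0,\ldots,u_n\v_0)$ with $0<u_1<\cdots<u_n$ real and $\sum u_i^2=1$ is exactly the extension of $\v_0$ specified in Section~\ref{wreath}. (The generating sets $X_k$ of this section likewise restrict to those of Section~\ref{wreath} with $X_\H=\{a_1\}$, though this is not needed for the corollary.)

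With these identifications, Corollary~\ref{wreathdecodescorrectly} applies verbatim and gives robust decoding. Moreover $\x_0$ has full orbit: a nonidentity element of $\G(r,1,n)$ sends $\x_0$ to a vector whose coordinates are the $\om^k u_i$ permuted, and equality with $\x_0$ would force the permutation to fix every index (the $u_i$ being distinct) and every root-of-unity factor to be $1$ (the $u_i$ being nonzero reals). Hence $\S=\op{Stab}_\G(\x_0)$ is trivial, so "decodes to a group element equivalent to $g$" means "decodes to $g$," which is the stated conclusion.

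I expect the only real work to be bookkeeping: writing the dictionary between the monomial-matrix picture of $\G(r,1,n)$ and the $\sigma(h_1,\ldots,h_n)$ notation for $\H\wr\Sym_n$ carefully enough that the coset leaders $b_j\cdots b_\ell$ visibly match the cycles $(j\ \cdots\ \ell+1)$ and that the action conventions (in particular the $\sigma^{-1}$ appearing in Section~\ref{wreath}) line up. No new geometric input is needed beyond Theorem~\ref{wreathgreedy} and Theorem~\ref{algorithmworks}, which are already packaged inside Corollary~\ref{wreathdecodescorrectly}.
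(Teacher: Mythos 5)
Your proposal is correct and is essentially the paper's own argument: the paper likewise observes that the subgroup sequence, coset leaders, and initial vector for $\G(r,1,n)$ are exactly the general wreath-product choices of Section~\ref{wreath} with $\H=\G(r,1,1)$ acting on $\CC^1$, and then cites Corollary~\ref{wreathdecodescorrectly}. Your extra verifications (matching $b_j\cdots b_\ell$ with the cycles, checking $\v_0=1$ is suitable, and noting that $\x_0$ has full orbit so ``equivalent to $g$'' means ``equal to $g$'') are exactly the bookkeeping the paper leaves implicit.
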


\hphantom{aaa}

\vspace{0ex}

\hphantom{aaa}

\subsection{Implementing the Decoding Algorithm Explicitly}

Although the last corollary shows that the algorithm decodes correctly,
it is helpful to point out explicitly how one implements
the algorithm by hand 
using the ideas in the proof of Theorem~\ref{wreathgreedy}.
Suppose $\r=(x_1, \ldots, x_n)$ is a received vector in $\CC^n$,
and recall that $\x_0 = (u_1, \ldots, u_n)$ with $0<u_1<\ldots<u_n$ real.
Consider the sequence
\begin{align*}
 \Vert \r &- \x_0 \Vert \\
 \Vert a_1^k\r &- \x_0 \Vert \\
 \Vert a_2^{\ell}a_1^k\r &- \x_0 \Vert \\
 \Vert b_1^{\delta}a_2^{\ell}a_1^k\r &- \x_0 \Vert \\
 \Vert a_3^m b_1^{\delta}a_2^{\ell}a_1^k\r &- \x_0 \Vert \\
 \Vert c a_3^m b_1^{\delta}a_2^{\ell}a_1^k\r &- \x_0 \Vert \\
{} \vdots &{}
\end{align*}
where $c$ is a coset leader for $\G_5$ over $\G_4$, thus one of
$\{ I, b_2, b_1b_2 \}$.
First $k$ is chosen to maximize $\op{Re}(\om^k x_1)$, 
then $\ell$ to maximize $\op{Re}(\om^{\ell} x_2)$. 
Now since $u_1 < u_2$, an easy calculation shows that if
$\op{Re}(\om^k x_1) > \op{Re}(\om^{\ell} x_2)$, then we should 
apply $b_1$, switching the values, to minimize the distance; 
otherwise not (so that $\delta$ is $0$ or $1$).  
Next $m$ is chosen to maximize $\op{Re}(\om^m x_3)$. 
Then, since $u_1 < u_2 < u_3$, we apply the correct coset leader $c$ 
(a permutation) to put
$\op{Re}(\om^k x_1)$, $\op{Re}(\om^{\ell} x_2)$, $\op{Re}(\om^m x_3)$
into increasing order (an insertion sort).  Continue until pau.

\hphantom{aaa}

\begin{remark}\label{speedup}{\em
An observation in the proof of Theorem~\ref{wreathgreedy}
can be used to speed up the algorithm 
considerably.  Writing $x=|x|e^{i\theta}$, 
we maximize the real part of
$\om^kx = |x|e^{(\frac{2 \pi k}r + \theta)i}$
by making 
$\frac{2 \pi k}r + \theta$ as close to $2\pi$ as possible.
Thus $k$ should be chosen as the nearest integer to $r-\frac{r\theta}{2\pi}$.}
\end{remark}

\subsection{Initial vector}
We refine our choice of initial vector
so that neighbors of $\x_0$ are just its images under
the natural generating set $a_1, b_1, \ldots, b_{n-1}$
in order to control errors.
We mimic construction of an 
optimal vector for the Coxeter group $W\! B_n$.
If we take a real vector $\x_0$ of the form
\[   \x_0 = (  \alpha, \alpha+\beta, \alpha+2\beta, \ldots ,\alpha +(n-1)\beta )  \]
and require that 
\[ \| a_1 \x_0 - \x_0 \| = \| b_1 \x_0 - \x_0 \| = \ldots =
\| b_{n-1} \x_0 - \x_0 \| , \]
then a straightforward computation gives
\[   \frac \beta \alpha = \sqrt{1 - \cos \frac{2\pi}r}  \]
with $\sqrt{2} \beta$
as the minimum distance of the code. 
Initially we set $\alpha=1$, and then normalize so that $\| \x_0 \|=1$.
Note that $\| a_i\x_0 - \x_0 \|$ will be greater than $\sqrt{2} \beta$
for $i>1$.
This choice gives an initial vector with full orbit under $\G$, 
and the minimum distances of the code
defined by this choice of $\x_0$ (for various $r$ and $n$)
have a reasonable order
of magnitude.  
Table~\ref{table:dmingot} gives the values achieved for small values of
$r$ and $n$.

\begin{table}[h]
\caption{Actual $d_{min}$ obtained for some $\G(r,1,n)$} 
\label{table:dmingot}
\centering
\begin{tabular}{|c|c|c|c|}
\hline
{$r$}& $n=2$ & $n=3$ & $n=4$ \rule[-1ex]{0ex}{3.5ex}
\\
\hline
$3$ & .71  & .41 & .27  \rule[-1ex]{0ex}{3.5ex}
\\
\hline
$4$ & .63  & .38 & .26  \rule[-1ex]{0ex}{3.5ex}
\\
\hline
$5$ & .56  & .35 & .24  \rule[-1ex]{0ex}{3.5ex}
\\
\hline
$6$ & .51  & .32 & .23  \rule[-1ex]{0ex}{3.5ex}
\\
\hline
$7$ & .46  & .30  & .21  \rule[-1ex]{0ex}{3.5ex}
\\
\hline
$8$ & .42  & .28 & .20  \rule[-1ex]{0ex}{3.5ex}
\\
\hline
\end{tabular}
\end{table}

\begin{remark}{\em
Notice that the fundamental regions depend on the choice of the initial 
vector $\x_0$ unlike the case of group coding
over the real numbers.  For example, consider 
the first subgroup in the subgroup sequence,
$\G_1=\la a_1 \ra$.  Writing $\x = (x_1, \ldots, x_n)$ 
and $\x_0 = (u_1, \ldots, u_n)$, we have
\begin{align*}
 \op{FR}(\G_1) &= \{ \x \in \mathbb C^n : 
  \| a_1^k\x-\x_0 \| > \| \x - \x_0 \| \text{ for } 1 \leq k < r  \} \\
   &= \{ \x \in \mathbb C^n : \op{Re}(x_1 \overline{u_1}) >
   \op{Re}(\om^k x_1 \overline{u_1}) \text{ for } 1 \leq k < r \} .
\end{align*}
This justifies in part our standard choice of $\x_0$ as indicated above.
}
\end{remark}

\subsection{Controlling Errors}
The above choices of subgroup sequence, coset leaders, and initial vector
for $\G(r,1,n)$ are consistent with those from Section~\ref{wreath}
for general wreath products.  Thus Proposition~\ref{errorcontrolwreath}
implies Error Control Property~\ref{propWes} for $\G(r,1,n)$.

We now check directly that 
Nearest Neighbors Property~\ref{naygen} holds
as well, i.e., we check
that if $g\x_0$ is any nearest neighbor of $\x_0$, then
$g$ lies in $X_{\G}\cup X_{\G}^{-1}$.
We argue that if
$g \ne I, a_1, a_1^{-1}$ or some $b_j$, then 
$\Vert g\x_0 - \x_0 \Vert^2 > 2\beta^2=d_{min}$.
This distance squared is the sum of (at least two)
terms of the form
\[  
| \om^t (\alpha+j\beta) - (\alpha+\ell \beta) |^2 
\]
for integers $j$ and $\ell$.
One may verify that if $j=\ell$ and $t \ne 0$,
then this expression is at least $2\beta^2$, while if $j \ne \ell$, then it is 
at least $\beta^2$.  


Corollary~\ref{controlnoise} then implies error control for the groups
$\G(r,1,n)$:
\begin{cor}
For $\G=\G(r,1,n)$, 
assume the above natural choice for subgroup sequence, initial vector,
coset leaders, and 
generating sets $X_k\subset \G$ for each $\G_k$.
If a received vector lands in the decoding region 
containing a nearest neighbor of $g^{-1}\x_0$
due to noise, then the subgroup decoding algorithm decodes it to a group element
differing from $g$ in only one factor when written
as a product of coset leaders.
\end{cor}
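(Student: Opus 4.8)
The plan is to reduce the statement to Corollary~\ref{controlnoise}. That corollary already yields precisely this conclusion once three facts are known for $\G$ with the data fixed above: the coset leaders are \robust, Error Control Property~\ref{propWes} holds, and Nearest Neighbors Property~\ref{naygen} holds. So I would simply verify these three hypotheses for $\G=\G(r,1,n)$ with the chosen subgroup sequence, initial vector $\x_0$, coset leaders, and generating sets $X_k$.

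Two of the three require no new argument. The group $\G(r,1,n)$ is the wreath product $\H\wr\Sym_n$ with $\H=\G(r,1,1)\cong\ZZ/r\ZZ$ acting on $\CC^1$, and the data fixed above for $\G(r,1,n)$ coincides term for term with the natural data prescribed for general wreath products in Section~\ref{wreath} (with $u_1$ playing the role of the initial vector $\v_0$ for $\H$ and $u_1<\ldots<u_n$ the scaling constants). Hence Theorem~\ref{wreathgreedy} (equivalently Corollary~\ref{rightchoice}) shows the coset leaders are \robust, and Proposition~\ref{errorcontrolwreath} gives Error Control Property~\ref{propWes}. So the only genuine work is the Nearest Neighbors Property.

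For that I would check $N_{\G}\subseteq X_{\G}\cup X_{\G}^{-1}=\{a_1,a_1^{-1},b_1,\ldots,b_{n-1}\}$ directly. With $\x_0=(\alpha,\alpha+\beta,\ldots,\alpha+(n-1)\beta)$ and $\beta/\alpha=\sqrt{1-\cos(2\pi/r)}$, a short computation gives $\|a_1\x_0-\x_0\|=\|b_j\x_0-\x_0\|=\sqrt2\,\beta$ for every $j$, and this common value is $d_{min}$. So it suffices to prove $\|g\x_0-\x_0\|^2>2\beta^2$ for every $g\in\G$ not equal to $a_1^{\pm1}$ or some $b_j$. Since $g$ is monomial, $\|g\x_0-\x_0\|^2$ is a sum over coordinates of terms $|\om^{t}(\alpha+j\beta)-(\alpha+\ell\beta)|^2$ for integers $j,\ell$ and a power $t$ of $\om$. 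The elementary estimates are: such a term is $\ge2\beta^2$ when $j=\ell$ and $t\ne0$ (from $|\om^t-1|^2\ge|\om-1|^2=2(1-\cos(2\pi/r))$ applied to the coordinate $(\alpha+j\beta)^2\ge\alpha^2$), and $\ge(j-\ell)^2\beta^2\ge\beta^2$ when $j\ne\ell$ (from $|\om^tz-w|^2\ge(|z|-|w|)^2$), with equality in the latter forcing $t=0$ since $w>0$ is real. A bookkeeping step then finishes: any $g$ off the generator list either scales some coordinate $>1$ by a nontrivial root of unity, or scales two coordinates, or moves three coordinates, or moves two non-adjacent ones, or combines an adjacent transposition with a scaling — and in each case at least two of the summands are present and their sum is strictly larger than $2\beta^2$. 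The main obstacle is precisely this last case analysis — tracking how a permutation interacts with the root-of-unity scalings on a strictly increasing real vector — which I expect to be the only slightly delicate point, though still routine.

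Once all three hypotheses are checked, Corollary~\ref{controlnoise} applies verbatim and delivers the asserted error control for $\G(r,1,n)$.
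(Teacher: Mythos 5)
Your proposal is correct and follows essentially the same route as the paper: the paper likewise obtains the Error Control Property from Proposition~\ref{errorcontrolwreath} (and greed compatibility from the wreath-product theorem), verifies the Nearest Neighbors Property by the very same coordinatewise estimate on $|\om^t(\alpha+j\beta)-(\alpha+\ell\beta)|^2$ (at least $2\beta^2$ when $j=\ell$, $t\ne 0$, and at least $\beta^2$ when $j\ne\ell$), and then invokes Corollary~\ref{controlnoise}. Your sketch is in fact slightly more explicit than the paper about the final case analysis, which the paper compresses into ``one may verify.''
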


\vspace{1ex}

\subsection{Efficient Decoding: Navigating Coset Leader Graphs}

We argue that the above choices for $\G(r,1,n)$
also yield efficient decoding using navigation through the 
coset leader graphs as described in Section~\ref{type3}. 
One can check directly that each coset leader graph
is connected (see Definition~\ref{propCL}) for $\G(r,1,n)$.
The graphs for $\G(4,1,4)$ are given in
Figure~1.
(Note that Kriloff and Lay~\cite{KriloffLay} show existence of
Hamiltonian cycles for the Cayley graphs of $\G(r,1,n)$.)
We use 
Remark~\ref{speedup} and the explicit decoding process 
described after Corollary~\ref{rightchoice}. 
At stages $1,2,4, \ldots, 2k$ 
where the coset leader graphs are cyclic,
we can choose in one step the coset leader that moves the received
vector closest to the initial vector.
For the permutation stages $3,5,\ldots,2k+1$ the graph gives an insertion
sort.  As in~\cite{FNP}, a modified insertion sort could also be used
to shorten the decoding somewhat.  Hence the coset leader graphs for
$\G(r,1,n)$ are particularly easy to navigate, compared to most unitary
groups.

\makeatletter 
\renewcommand{\thefigure}{1}
\makeatother
\begin{figure}[tbp]\label{cosetleadergraphs}.
\begin{center}
\includegraphics[height=2in,width=6in]{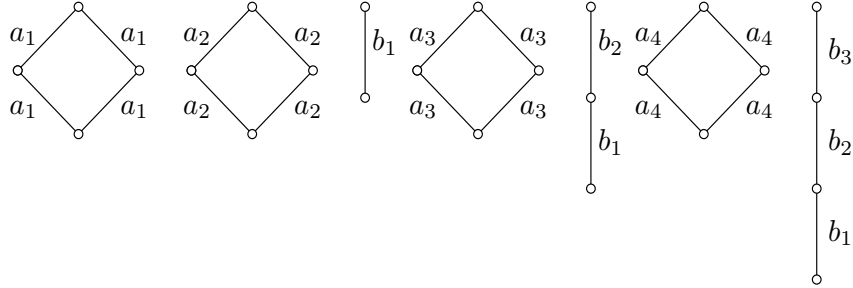}
\caption{Coset leader graphs for $\G(4,1,4)$}%
\end{center}
\end{figure}

\vspace{0ex}

\subsection{Efficient Decoding: Number of Steps in the Algorithm}
Assuming that we use the method indicated in the last
subsection to navigate the cyclic
coset leader graphs, 
the analysis of the average number of steps to decode using $\G(r,1,n)$
is identical to that given for the Weyl group $W\! B_n=\G(2,1,n)$ in Fossorier, Nation and 
Peterson~\cite{FNP}.  In other words, for any $r \geq 2$, one can decode 
$\G(r,1,n)$ just as fast as $\G(2,1,n)$.  
Moreover, exactly as in~\cite{FNP}, one can speed up the sorting by using
a slightly different subgroup sequence, which amounts to using an improved
insertion sort.  We omit the details
and give the results.  

Asymptotically, the
number of steps in decoding is $\frac {n^2}4$ for the
subgroup sequence given here, and $\frac {n^2}8$ for the modified sort.
But for moderate values of $n$, the number of steps is fewer than that
would indicate, and in fact close to the theoretical minimum.
Some of these numbers are given in Table~\ref{tab:Bncomp}, where
\begin{itemize}
\item $\gamma_n\,$ is the average number of comparisons 
  to decode using intermediate subgroups with a standard insertion sort,
\item $\gamma_n'\,$ is the average number of comparisons 
  to decode using intermediate subgroups with a modified insertion sort,
\item $n+\log_2 n!\,$ is the theoretical minimum average number of
  comparisons; see Knuth~\cite{Knuth}.
\end{itemize}

\begin{table}[h]
\caption{Average number of comparisons to decode $\G(r,1,n)$} 
\label{tab:Bncomp}
\centering
\begin{tabular}{|c|c|c|c|}
\hline
{$n$}&{$\gamma_n$}&{$\gamma'_n$}&{$n+\log_2 n!$}
\rule[-1.5ex]{0ex}{4ex}
\\
\hline
4 & 8.9 & 8.7 & 8.6 \rule[-1ex]{0ex}{3.5ex}
\\
\hline
8 & 27.3 & 24.0 & 23.3 \rule[-1ex]{0ex}{3.5ex}
\\
\hline
16 & 88.6 & 67.7 & 60.3 \rule[-1ex]{0ex}{3.5ex}
\\
\hline
32 & 307.9 & 204.5 & 149.7 \rule[-1ex]{0ex}{3.5ex}
\\
\hline
\end{tabular}
\end{table}
\subsection{Quaternions}
There is an obvious generalization of the groups $\G(r,1,n)$ that will
have the same good decoding properties.  These are the groups 
$\mathbb{P}(\K,n)$ of all $n \times n$ permutation matrices whose 
nonzero entries are from a group $\K$ of complex numbers $z$ with $|z|=1$,
or more generally, quaternions $w$ with $|w|=1$.  For example, we could 
take 
$$\K = \{ z \in \mathbb C : z^{2^k} = 1 \text{ for some } k \geq 1 \}\ .
$$
This is an infinite group, but for any given application we would only use
a finite part of it, although without a predetermined bound.  
Likewise, there are a few finite multiplicative subgroups of 
unit quaternions that could be used as entries in the permutation matrices;
see Kranek~\cite{WK} or Lehrer and Taylor~\cite{LT}.
As an exercise, we programmed a simulation of coding with $\mathbb P(\H,3)$
with $\H$ the 8-element quaternion group.

\vspace{2ex}

\hphantom{aaa}

\section{Other complex reflection groups}
\label{othergroups}

\subsection{Subgroups of $\G(r,1,n)$}
For any divisor $p$ of $r$, recall that $\G(r,p,n)$ is a reflection
subgroup of $\G(r,1,n)$.
The properties that make subgroup decoding work well
for the groups $\G(r,1,n)$ seem not to hold for the groups
$\G(r,p,n)$ with $p>1$,
except
for the real group $W\! D_n=\G(2,2,n)$ (see~\cite{FNP}).
A general choice of
subgroup sequence, initial vector, and coset leaders that is greed
compatible seems
elusive.  In addition, we have not been able to find choices
giving the Error Control Property~\ref{errorcontrol}.
This leaves the question:  
\emph{Is there any good decoding scheme for the groups $\G(r,p,n)$
with $p>1$?}


\subsection{Tetrahedral group $\G_4$, Octahedral $\G_8$, Icosehedral $\G_{16}$}
\label{G_8}

In Section~\ref{G_4} we saw that
subgroup decoding worked for codes based
on the tetrahedral group $\G_4$ using a careful choice of the subgroup
sequence and initial vector.
 There are two other reflection groups
of this type, the octahedral group $\G_8$ and the icosahedral group $\G_{16}$.
These groups are generated by matrices $A$ and $B$ satisfying the equations
$A^k=B^k=I$ and $ABA=BAB$ for $k=3$, 4 and 5 respectively:
\begin{itemize}
   \item $k=3$ gives $\G_4$ with 24 elements.
   \item $k=4$ gives $\G_8$ with 96 elements.
   \item $k=5$ gives $\G_{16}$ with 600 elements.
\end{itemize}

For the octahedral group, if we take the natural subgroup sequence
$\{ I \} < \{ I,A,A^2,A^3 \} < \G_8$ and a \emph{nonreal} unit vector $\x_0$
such that $\| A^{-1}\x_0 - \x_0 \| = \| B^{-1}\x_0 - \x_0 \|$, 
then coset leaders can be chosen minimal
and the subgroup decoding 
algorithm decodes correctly with some noise.

On the other hand, we have not been able to find a combination of subgroup
sequence and initial vector that gives minimal coset leaders for a code
based on the icosahedral group $\G_{16}$.  For example, 
for a standard matrix representation
and subgroup sequence $\{ I \} < \{ I,A,A^2,A^3,A^4 \} < \G_{16}$,
ties arise in a rather unexpected way:  
\[
B^3A^4B^3 = \begin{bmatrix} c & 0 \\ 0 & c \end{bmatrix}
\qquad\text{and}\qquad
B^3A^4B^3A^4 = \begin{bmatrix} 
 c & 0 \\ 0 & \overline{c} 
\end{bmatrix}
\]
where $c = e^{\frac {\pi}5 i}$.  

\subsection{Hessian groups $\G_{25}$ and $\G_{26}$}  
On the other hand, the complex reflection groups $\G_{25}$ and $\G_{26}$
do not admit any subgroup decoding scheme as far as we can tell.
Despite repeated attempts, using computerized search programs,
we have been unable to find a subgroup sequence
and initial vector such that subgroup decoding works
for these groups.

\section{Conclusions} \label{conk}

Subgroup decoding works well for codes based on the groups $\G(r,1,n)$,
which are wreath products of cyclic groups, thus generalizing codes
based on the real reflection groups $W\! A_n \cong \text{Sym}_n$ 
and $W\! B_n$.
Codes based on these groups decode robustly, have good error control,
and decode in few steps relative to the size of the group.
There are problems with error control (Property~\ref{propWes}) for the 
groups $\G(r,p,n)$ with $p>1$ that generalize $W\! D_n$.
Subgroup decoding works on some of the exceptional unitary groups, 
but not others, and this seems to be inherent in the structure of the groups.
In general, good coding properties are preserved by wreath products,
allowing us to build large codes from small ones.

This suggests that other decoding methods should be considered.  
Walker, building on the work of Kim~\cite{HJK}, has designed an 
alternative algorithm for arbitrary unitary groups
called the {\em Snowflake Algorithm}; see~\cite{CW, JBNCW}.
The efficiency of this other decoding method varies from pretty good to very
good, depending on the group action, in ways that we do not yet totally
understand.  In the Snowflake algorithm,
the basic algorithm of group coding, transmitting $g^{-1}\x_0$ and
decoding with $g'(\r) \approx \x_0$, remains unchanged.
However, the use of a subgroup sequence is abandoned, so that the greedy 
aspect of the algorithm is no longer a factor. 
Rather, a set of generators is chosen for $\G$
so that each group element will have a relatively short expression
as a product of the generators.  This expression
may not be unique, but one such
expression can be chosen as a canonical form for the element and tables
of equivalent minimal expressions calculated.
Using these, one can decode \emph{correctly} with some noise, and for some groups
it can be done \emph{efficiently}.  For those groups where the
 algorithm can be made efficient, including wreath products of the complex 
reflection groups $\G_4$, $\G_5$, $\G_8$ and $\G_{20}$,
the Snowflake algorithm might provide an alternative method of decoding group codes.

\hphantom{aaa}

\section{Appendix I:  A primitive group decoding algorithm}

This paper has focused on subgroup decoding, which works very well for 
codes based on real reflection groups or the groups $\G(r,1,n)$.  
These group codes may prove useful in certain 
practical situations.  The same probably cannot be said for codes
based on arbitrary unitary groups, though there may be applications which
we cannot yet envision, e.g., in cryptography.   
Often, a choice of initial vector and subgroup sequence yielding
an effective decoding algorithm (or one that even decodes correctly)
remains elusive.   
In this appendix, we describe a very general type of decoding algorithm. 
Then we give an analog of Theorem~\ref{shrinkregion}:  If a weak necessary
condition is satisfied, then the algorithm decodes correctly when the received
vector is sufficiently close to the sent codeword.
The appendix is based on Kim~\cite{HJK}; a refined version is given in
Walker~\cite{CW}.

The parameters for this type of decoding are
\begin{itemize}
\item a finite unitary group $\G$,
\item an initial unit vector $\x_0$,
\item a generating set $X$ for $\G$.
\end{itemize}
Again, the codewords are elements of the orbit $\G\x_0$, a codeword 
$\x=g^{-1}\x_0$ is transmitted, and the received vector is $\r=\x+\n$
where $\n$ represents noise.
The {\em primitive decoding algorithm} 
decodes as follows.  We fix some predetermined $\varepsilon >0$.
Let $\r_0=\r$. Recursively, given $\r_k$, find a transformation 
$c_{k+1} \in X$ such that the vector $\r_{k+1} = c_{k+1}\r_k$
satisfies 
$$\| \r_{k+1} - \x_0 \| < \| \r_k - \x_0 \| - \varepsilon\ .$$  
If no such $c_{k+1}$ exists, terminate and 
decode $\r$ as $c_k \cdots c_1$.

For example, if $\G$ is a reflection group, we might take $X$ to be all reflections
or a minimal generating set of reflections or anything in between.
(Walker has shown that it may be necessary to include
some nonreflections in the set $X$ to obtain the condition $(\ddagger)$ 
below.)

Let us assume that the pair $X$, $\x_0$ 
satisfies the condition that every
nontrivial codeword is sent closer to the initial vector
by some element of $X$:
\[ (\ddagger )\ \  \text{For any } \w \in \G\x_0 \text{ with }
\w\neq \x_0, 
\Vert  c\w-\x_0 \Vert < \Vert  \w-\x_0 \Vert
\text{ for some }
c \in X .\]
(This is a condition satisfied by simple reflections in a Coxeter group:
Every group element factors as a product of a minimum number
of simple reflections generating the group, 
multiplying by the first factor decreases
length, and length corresponds to distance back to 
some initial vector.)

We want to show that the procedure terminates and decodes correctly,
i.e., at termination $c_k \cdots c_1 \in  \S g$ where $\S = \op{Stab}(\x_0)$. 
Clearly $(\ddagger)$ is necessary for correct decoding, for if $\w$ witnesses
a failure of $(\ddagger)$, then $\w$ cannot be decoded correctly even 
with no noise.
For each codeword $\w$, let $\op{MG}(\w)$ be the set of 
``minimal generators'' $c$  
that minimize the distance from $c\w$
back to $\x_0$ over all $c$
in $X \cup \{I \}$:
\[
\op{MG}(\w) = \big\{ c \in X \cup \{ I \} : 
\Vert  c\w-\x_0 \Vert  
\leq
\Vert d\w-\x_0 \Vert
\ \text{ for all } d\in X\cup\{I\}
\big\}.
\]
Then $(\ddagger )$ 
is equivalent to the condition that
$I \notin \op{MG}(\w)$
whenever codeword $\w \ne \x_0$.
Define
\[ \delta = \min_{
\substack{
\w \in \G\x_0 - \{ \x_0 \}
\rule{0ex}{2ex}
 \\ c\, \in \op{MG}(\w)
\rule{0ex}{2ex}
}}
            \Vert \w -\x_0 \Vert - \Vert c\w -\x_0 \Vert  \]
so that $\Vert \w -\x_0 \Vert \geq \Vert c\w -\x_0 \Vert + \delta$
for any $c\in \op{MG}(\w)$. 
There are two versions of the algorithm.  At each step, either
\begin{enumerate}
\item[(A)]  choose $c_{k+1}$ to minimize $\Vert c_{k+1}\r_k -\x_0 \Vert$, or 
\item[(B)]  choose the first $c_{k+1}$ such that 
$\Vert c_{k+1}\r_k -\x_0 \Vert < \Vert \r_k -\x_0 \Vert - \frac 13 \delta$. 
\end{enumerate}
In either version, when there is no $c \in X$ such that
$\Vert c\r_k -\x_0 \Vert < \Vert \r_k -\x_0 \Vert - \frac 13 \delta$,
we terminate and decode $\r$ as $c_k \cdots c_1$.


We verify that either version of the
primitive decoding algorithm works with some noise:

\begin{thm} \label{smallnoisez}
Assume that the pair $X$, $\x_0$ satisfies the condition $(\ddagger)$.
Define $\delta$ as above.
If\/ $\Vert \r - g^{-1}\x_0 \Vert < \delta/ 3$, then the procedure
terminates in at most $\lfloor 6/\delta \rfloor$ steps 
and outputs $c_k \cdots c_1
\in g\S$.
\end{thm}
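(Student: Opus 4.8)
The plan is to run the algorithm alongside a bookkeeping sequence of genuine codewords: if $c_1, c_2, \ldots$ are the generators selected and $\r_k = c_k \cdots c_1 \r$ are the iterates, set $\w_k = c_k \cdots c_1 (g^{-1}\x_0)$, the codeword attached to $\r_k$. Since each $c_i$ is unitary, $\|\r_k - \w_k\| = \|\r - g^{-1}\x_0\| < \delta/3$ for \emph{every} $k$; this single inequality drives the whole argument. The conceptual heart is the dichotomy: \emph{the algorithm can take another step from $\r_k$ if and only if $\w_k \ne \x_0$.}

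For the ``if'' direction I would argue as follows. Suppose $\w_k \ne \x_0$. By the equivalent reformulation of $(\ddagger)$ recalled before the definition of $\delta$, $I \notin \op{MG}(\w_k)$, so there is some $c \in \op{MG}(\w_k) \subseteq X$ with $\|c\w_k - \x_0\| \le \|\w_k - \x_0\| - \delta$. Two triangle inequalities then give the needed improvement: first $\|c\r_k - \x_0\| \le \|c\r_k - c\w_k\| + \|c\w_k - \x_0\| < \delta/3 + \|\w_k - \x_0\| - \delta$, using $\|c\r_k - c\w_k\| = \|\r_k - \w_k\| < \delta/3$; and second $\|\w_k - \x_0\| < \|\r_k - \x_0\| + \delta/3$, whence $\|c\r_k - \x_0\| < \|\r_k - \x_0\| - \delta/3$. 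Thus $c$ is an admissible move, so the algorithm does not terminate at step $k$ (under rule (A) it picks a distance-minimizer, which is at least as good; under rule (B) it picks the first $\delta/3$-improver, which now exists). For ``only if'': if $\w_k = \x_0$ then $\|\r_k - \x_0\| = \|\r_k - \w_k\| < \delta/3$, so for any $c \in X$ we have $\|c\r_k - \x_0\| \ge 0 > \|\r_k - \x_0\| - \delta/3$, and the algorithm halts.

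Given the dichotomy, termination, the step bound, and correctness all fall out quickly. As long as $\w_k \ne \x_0$, each step strictly decreases $\|\r_k - \x_0\|$ by more than $\delta/3$, while this quantity stays $\ge 0$ and starts at $\|\r_0 - \x_0\| \le \|\r - g^{-1}\x_0\| + \|g^{-1}\x_0 - \x_0\|$, essentially the diameter $2$ of the unit sphere; comparing forces the number of steps to be at most $\lfloor 6/\delta \rfloor$ and, in particular, guarantees the algorithm halts — necessarily at a step $K$ with $\w_K = \x_0$, by the dichotomy. Unpacking $\w_K = \x_0$ gives $(c_K \cdots c_1)(g^{-1}\x_0) = \x_0$, i.e. $(c_K \cdots c_1)g^{-1} \in \S$, so the output $c_K \cdots c_1$ is equivalent to $g$, which is exactly correct decoding.

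The routine parts are the two triangle-inequality estimates and the arithmetic of the step count. The only points needing genuine care are making the ``if'' argument uniform over both rules (A) and (B) — one must confirm that the single admissible generator produced from $\op{MG}(\w_k)$ suffices whether the algorithm minimizes distance or merely takes the first $\delta/3$-improving generator — and sharpening the estimate on $\|\r_0 - \x_0\|$ enough to land on exactly $\lfloor 6/\delta \rfloor$ rather than a slightly weaker constant (for instance by monitoring the codeword distances $\|\w_k - \x_0\| \le 2$ in place of the received-vector distances).
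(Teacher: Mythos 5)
Your proposal is correct and follows essentially the same route as the paper: track the codeword $\w_k = c_k\cdots c_1 g^{-1}\x_0$ alongside the iterate, use unitarity to keep $\Vert \r_k - \w_k\Vert < \delta/3$, and apply the same two triangle-inequality estimates via $(\ddagger)$ and the definition of $\delta$ to get a guaranteed improvement of $\delta/3$ per step. Your explicit ``only if'' half of the dichotomy (that the algorithm must halt once $\w_k = \x_0$, and that halting forces $c_K\cdots c_1 g^{-1}\in\S$) is a detail the paper leaves implicit, and your conclusion $c_K\cdots c_1\in\S g$ is the correct reading of the (slightly mistyped) statement.
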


\begin{proof}
We show that each step of the algorithm moves us at least $\delta/ 3$
closer to the initial vector.
Hence the process terminates in at most 
\[ (3 / \delta)\ 
\max \Vert \w - \x_0 \Vert 
\leq (3/ \delta) \,  2
=6 / \delta \]
steps (not counting a possible terminal step of choosing $I$), where
the max is taken over all codewords $\w$ (on the unit sphere).

At step $k$, set $g' = c_k \cdots c_1$, $\w = g'g^{-1}\x_0$, and
$\r_k = g'\r$.  Suppose $\w \ne \x_0$.  Note that
$\Vert \r_k - \w \Vert = \Vert g'\r-g'g^{-1}\x_0 \Vert < \delta/ 3$.  
By $(\ddagger)$ and the definition of $\delta$, 
there exists $c \in \op{MG}(\w)$ with
\begin{align*}
 \Vert  \r_k-\x_0 \Vert &\geq \Vert \w-\x_0 \Vert - \Vert \r_k-\w \Vert  \\ 
 &>   \Vert c\w-\x_0 \Vert + \delta - \delta/ 3  \\ 
 &=    \Vert c\w-\x_0 \Vert + 2\delta/ 3  
\end{align*}
whilst 
\begin{align*}
 \Vert  c\r_k-\x_0 \Vert &\leq  
\Vert c\w-\x_0 \Vert + \Vert c\r_k-c\w \Vert \\ 
 &<  \Vert c\w-\x_0 \Vert + \delta/ 3\ . 
\end{align*}
Thus
\[ 
\Vert  c\r_k-\x_0 \Vert 
< \Vert  \r_k-\x_0 \Vert -  \delta/ 3 \]
making $c\r_k$ closer than $\r_k$ to $\x_0$ by a step of length 
at least $\delta/ 3$ as desired.
\end{proof}


\section{Appendix II:  Partial group codes based on $\G(r,1,n)$}

It can be advantageous to use a group code based on a proper subset
of the codewords, $\W \subset \G\x_0 = \{ g\x_0 : g \in \G \}$.
In this appendix, we briefly indicate how this can be done to yield a
significant improvement in codes based on $\G(r,1,n)$.

Although the code based on $\G(r,1,n)$ in Section~\ref{gr1n}
has good error control
properties, a problem arises:
the distance between adjacent codewords is not uniform,
which makes the decoded ``bits" not uniformly reliable.
(Errors are more likely in the parts of the received vector
corresponding to smaller components of the initial vector.)
This stems from the fact that the initial
vector, 
\[  \x_0 = (  \alpha, \alpha+\beta, \alpha+2\beta, \ldots,
    \alpha+(n-1)\beta ), \]
gives $d_{min}=\sqrt{2}\beta$ 
as the minimal distance of the code 
where $0 < \beta < \alpha$ and $\beta / \alpha =
(1 - \cos \frac{2\pi}r)^{1/2}$.
 For the generators $a_i$ and $b_j$ of $\G(r,1,n)$, this choice
implies that
\[ \| a_1 \x_0 - \x_0 \| = \| b_1 \x_0 - \x_0 \| = \ldots =
\| b_{n-1} \x_0 - \x_0 \| = d_{min}   \]
and $\| a_j \x_0 - \x_0 \| > \sqrt{2}\beta$ for $j>1$. 

One solution to this problem is the following.  Recall that any group
element $g \in \G(r,1,n)$ can be written as a product of coset
leaders in the form
$$g = \tau_{\ell_n} a_n^{k_n} \cdots \tau_{\ell_3} a_3^{k_3} 
   \tau_{\ell_2} a_2^{k_2} a_1^{k_1}$$                              
where each $\tau_{\ell_j}$ is a permutation and each $k_i \in \mathbb{N}$.
Choose integers $m_j$ for $1 \leq j \leq n$ with 
$m_j$ dividing $m_{j+1}$,
  \[ 1=m_n \,|\, m_{n-1} \,|\, \dots \,|\, m_2 \,|\, m_1 \,|\, r \, .\]
Then use only codewords $g\x_0$ (as above) with
$m_j \,|\, k_j$ for $1 \leq j \le n$.
Although this code is a proper subset of the full code 
for $\G(r,1,n)$, it does not correspond
to a subgroup.  Note that the size of the code is
\[ |W| = \frac {n!\, r^n}{\prod_{1 \leq j \leq n} m_j}\ \ . \]
The decoding algorithm is unchanged, except that the received vector
is interpreted to be the nearest \emph{codeword}.

Now the object is to adjust the parameters $m_1, \ldots, m_{n-1}$ and
the initial vector $\x_0$ to
make as uniform as possible the distances 
        $\| b_j \x_0 - \x_0 \|$ for $1 \leq j \leq n-1$, and   
        $\| a_k^{m_k} \x_0 - \x_0 \|$ for $1 \leq k \leq n$, 
while increasing the minimum distance of the code in the process.
In practice this can be done rather effectively by \emph{ad hoc} adjustments,
but an interesting problem arises: \emph{Find a good algorithm to adjust these
parameters.}

For example, consider the code based on $\G(16,1,4)$.  The original
subgroup decoding scheme takes
$m_1=m_2=m_3=m_4=1$ and an initial vector of the form
\[  \x_0 = (  \alpha, \alpha+\beta, \alpha+2\beta, \ldots,
    \alpha+(n-1)\beta ) \]
with $\beta/\alpha = .2759$.
The size of the code is $16^4 \cdot 4! = 2^{16} \cdot 24$.
One can calculate that the variation in the
distances $\| g\x_0 - \x_0 \|$ with $g \in \{ a_1,a_2,a_3,a_4,b_1,b_2,b_3 \}$
is max/min = 1.83, and the normalized $d_{min}$ is .169.

If instead we take $m_1=4$, $m_2=2$, $m_3=m_4=1$ and $\beta/\alpha = 1.0$,
then we obtain a code with only $2^{13} \cdot 24$ codewords.  
However, the variation
in the distances is then max/min = 1.36, and the minimum distance $d_{min}$ becomes .280,
giving a considerable improvement.



\begin{thebibliography}{99}

\bibitem{DS1}
D. Slepian, \emph{Permutation modulation}, Proc.~IEEE, {\bf 53} (1965),
  228--236.

\bibitem{DS2}
D. Slepian, \emph{Group codes for the Gaussian channel}, Bell Syst.~Tech.~J.,
  {\bf 47} (1968), 575--602.

\bibitem{II}
I. Ingemarsson, \emph{Group codes for the Gaussian channel}, in
  Topics in Coding Theory (Lecture Notes in Control and Information Theory),
  vol. {\bf 128}, New York, Springer-Verlag (1989), 73--108.

\bibitem{TE}
T. Ericson, \emph{Permutation codes}, Rapport de Recherche INRIA,
  no.~2109, Nov. 1993.

\bibitem{JMSB}
A. Jiang, R. Mateescu, M. Schwartz, and J. Bruck,
\emph{Rank modulation for flash memory}, Proceedings IEEE ISIT (2008), 1731-1735.

\bibitem{JSB}
A. Jiang, M. Schwartz, and J. Bruck,
\emph{Error correcting codes for rank modulation}, Proceedings IEEE ISIT (2008), 1736-1740.

\bibitem{BM}
A.\ Barg and A.\ Mazumdar,
\emph{Codes in permutations and error correction for rank modulation}, 
IEEE Trans. on Information Theory 56 (2010), 6273-6293.

\bibitem{ML}
T. Mittelholzer and J. Lahtonen, \emph{Group codes generated by finite
  reflection groups}, IEEE Trans.~on Information Theory, {\bf 42} (1996), 
  519--528. 

\bibitem{FNP}
M. Fossorier, J. Nation and W. Peterson, \emph{Reflection group codes and 
their decoding}, IEEE Trans.~on Information Theory, {\bf 56} (2010),
6273--6293.

\bibitem{HJK} 
H.J. Kim, Decoding Complex Reflection Groups, Master's project, University
  of Hawaii, 2011.
  Available at scholarspace.manoa.hawaii.edu.

\bibitem{CW} 
C. Walker, The Snowflake Decoding Algorithm, Master's project, University
  of Hawaii, 2012.
  Available at scholarspace.manoa.hawaii.edu.

\bibitem{JBNCW} 
J.B. Nation and C. Walker, \emph{The Snowflake Decoding Algorithm},
  submitted, preprint available at math.hawaii.edu/$\sim$jb.

\bibitem{TWMH}
T. Wadayama and M. Hagiwara, \emph{LP-Decodable Permutation Codes Based
on Linearly Constrained Permutation Matrices}, 
IEEE Trans.\ Inform.\ Theory,  vol.\ 58, no.\ 6 (1012), 5454--5470.

\bibitem{MHJK}
M. Hagiwara and J. Kong,
\emph{Comparing Euclidean, Kendall tau metrics toward extending LP decoding}, Proceedings ISITA (2012), 91--95.

\bibitem{KriloffLay}
C.\ Kriloff and T.\ Lay,
``Hamiltonian cycles in Cayley graphs of imprimitive complex reflection
groups'', arXiv: 1303.4147.

\bibitem{ST}
G.C. Shephard and J.A. Todd, \emph{Finite unitary reflection groups},
  Canad. J. Math., {\bf 6} (1954), 274--304.

\bibitem{GB}
L.C. Grove and C.T. Benson, Finite Reflection Groups (GTM 99), New York,
Springer-Verlag, 1985.

\bibitem{JH}
J.E. Humphreys, Reflection Groups and Coxeter Groups (Cambridge Studies
in Advanced Mathematics, vol.~29), Cambridge, UK, Cambridge Univ.~Press, 1990.
Springer-Verlag, 1985.

\bibitem{RK}
R. Kane, Reflection Groups and Invariant Theory, CMS Books in Mathematics,
New York, Springer-Verlag, 2001.

\bibitem{Knuth}
D. Knuth,
Searching and Sorting, the Art of Computer Programming, vol. 3, Reading, MA, Addison-Wesley, 1973.

\bibitem{WK}
W. Kranek, \emph{Finite subgroups of the quaternions} (2003), 
available at www.math.virginia.edu/$\sim$ww9c/kranek.pdf.

\bibitem{LT}
G.I. Lehrer and D.E. Taylor, Unitary Reflection Groups, Australian Math.
Soc. Lecture Series (no.~20), Cambridge, UK, Cambridge Univ.~Press, 2009.

\end{thebibliography}
\end{document}